\def\@begintheorem#1#2{
   \par\noindent\bgroup{\sc #1\ #2. }\it\ignorespaces}
\def\@beginremark#1#2{
   \par\noindent\bgroup{\sc #1\ #2. }\ignorespaces}
\def\@opargbegintheorem#1#2#3{
   \par\bgroup{\sc #1\ #2\ (#3). }\it\ignorespaces}
\def\@endtheorem{\egroup}
\def\shah{{\makebox[2.7ex][s]{$\sqcup$\hspace{-0.5em}\hfill$\sqcup$}}}
\def\shah1{{\makebox[2.3ex][s]{$\sqcup$\hspace{-0.15em}\hfill $\sqcup$}}}
\newtheorem{theorem}{Theorem}[section]
\newtheorem{lemma}[theorem]{Lemma}
\newtheorem{proposition}[theorem]{Proposition}
\newtheorem{definition}[theorem]{Definition}
\newtheorem{remark}[theorem]{Remark}
\newtheorem{example}[theorem]{Example}
\newtheorem{corollary}[theorem]{Corollary}
\newtheorem{question}[theorem]{Question}
\newcommand{\Sc}{\mathcal S}
\newcommand{\commentout}[1]{}
\newcommand{\fhat}[1]{\widehat{#1}}
\newcommand{\Z}{\mathbb{Z}}
\newcommand{\supp}{{\rm supp}\,}
\newcommand{\R}{\mathbb{R}}
\newcommand{\N}{\mathbb{N}}
\newcommand{\C}{\mathbb{C}}
\newcommand{\eqa}[1]{\begin{eqnarray}#1 \end{eqnarray}}
\newcommand{\Tg}{\Phi_g}
\newcommand{\calS}{{\cal S}}
\newcommand{\vol}{ {\rm vol } }
\newcommand{\ga}{ {\mathfrak g} }
\newcommand{\Ga}{ {\mathfrak G} }
\newcommand{\lllambda}{u}
\newcommand{\weight}{w}
\newcommand{\Rh}{{\R}}
\newcommand{\rone}{x}
\newcommand{\rtwo}{t}
\newcommand{\rthree}{x'}
\newcommand{\gone}{\xi}
\newcommand{\gtwo}{\nu}
\newcommand{\gthree}{\xi'}
\newcommand{\gfour}{\nu'}
\def\supp{\mathop{\textstyle{\rm supp}}\nolimits}
\begin{document}

%
%
\title{\bf\vspace{-39pt}
Sampling of Operators}
%
%

\author{G\"otz E. Pfander \\ \small School of Engineering and Science, Jacobs University, \\
\small 28759 Bremen, Germany \\ \small g.pfander@jacobs-university.de}

%
%
\maketitle
\thispagestyle{fancy}

%
%
\markboth{\footnotesize \rm \hfill G.E. PFANDER
\hfill} {\footnotesize \rm \hfill Sampling  of Operators\hfill}

%
%
\begin{abstract}
Sampling and reconstruction of functions is a central tool in science. A key result is given by the sampling theorem for bandlimited functions attributed to Whittaker, Shannon, Nyquist, and Kotelnikov.  We develop an analogous sampling theory for operators which we call bandlimited if their Kohn-Nirenberg  symbols are bandlimited.  We prove sampling theorems for such operators and show that they are extensions of the classical sampling theorem.\footnote{2010 Mathematics Subject Classification. Primary 42B35, 94A20; Secondary 35S05, 47B35, 94A20.}
\end{abstract}
\maketitle

\section{Introduction}\label{section:introduction}

The classical sampling theorem for bandlimited functions states that a function whose Fourier
transform is supported on an interval of length $\Omega$ is completely
characterized by samples taken at rate at least $1/\Omega$ per unit interval.
That is, with $\mathcal F$ denoting the Fourier transform\footnote{See Section~\ref{section:notation} for basic notation used throughout this paper.} we have

\begin{theorem}\label{thm:shannon}
  For $f\in L^2(\R)$ with $\supp \mathcal F f \subseteq [-\frac \Omega 2, \frac \Omega 2)$, choose $T$ with  $T\Omega\leq 1$. Then
  \begin{eqnarray}
    \|\{f(nT)\}\|_{l^2(\Z)}=T\, \|f\|_{L^2(\R)},\notag 
  \end{eqnarray} and $f$ can be reconstructed by means of
  \begin{eqnarray}
    f(x)=\sum_{n\in\Z} f(nT)\, \frac{\sin(\pi T (x-n))}{\pi T (x-n)} \notag 
  \end{eqnarray}
  with convergence in $L^2(\R)$.
\end{theorem}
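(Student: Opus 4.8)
The plan is to move the entire problem to the Fourier side, where the hypothesis $\supp \mathcal F f \subseteq [-\frac{\Omega}{2}, \frac{\Omega}{2})$ together with $T\Omega \le 1$ says that $\mathcal F f$ is supported in an interval of length $\Omega$ that fits inside the interval $I = [-\frac{1}{2T}, \frac{1}{2T})$ of length $1/T$. The point is that $\mathcal F f$ may then be regarded as a single period of a $\tfrac1T$-periodic function without any overlap, so that sampling $f$ on the lattice $T\Z$ corresponds exactly to reading off Fourier coefficients on $I$, with no aliasing.

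Before anything else I would make sense of the samples $f(nT)$: since $\mathcal F f$ has compact support, the Paley--Wiener theorem supplies a continuous (indeed entire) representative of $f$, so point evaluation is legitimate and, by Fourier inversion,
\begin{eqnarray}
  f(nT) = \int_{-\Omega/2}^{\Omega/2} \mathcal F f(\xi)\, e^{2\pi i nT\xi}\, d\xi \notag
\end{eqnarray}
for every $n\in\Z$. I recognize the right-hand side as a constant multiple of the $n$-th coefficient of $\mathcal F f$ relative to the system $e_n(\xi) = \sqrt T\, e^{2\pi i nT\xi}\,\mathbf 1_I(\xi)$. A change of variables reducing $I$ to $[-\tfrac12,\tfrac12)$ shows that $\{e_n\}_{n\in\Z}$ is an orthonormal basis of $L^2(I)$; this is exactly where $T\Omega\le 1$ enters, since it guarantees that $\supp \mathcal F f$ fits inside a single period $I$, and without it the periodized copies would overlap (alias) and the sampling map would cease to be injective.

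The norm identity is then immediate from Parseval's theorem on $L^2(I)$: expanding $\mathcal F f$ in the basis $\{e_n\}$ expresses $\|\mathcal F f\|_{L^2(I)}^2$ as a multiple of $\sum_n \abs{f(nT)}^2$, while Plancherel together with $\supp \mathcal F f \subseteq I$ gives $\|\mathcal F f\|_{L^2(I)} = \|f\|_{L^2(\R)}$. Combining the two yields the claimed isometry $\|\{f(nT)\}\|_{l^2(\Z)} = T\,\|f\|_{L^2(\R)}$, the precise constant being dictated by the normalization of $\mathcal F$ used in the paper. For the reconstruction I would take the expansion $\mathcal F f = \sum_n \ip{\mathcal F f}{e_n}\, e_n$ in $L^2(I)$ and apply the inverse Fourier transform term by term. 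Since the inverse transform is unitary from $L^2(I)$ onto the Paley--Wiener space of functions bandlimited to $I$, the resulting series converges in $L^2(\R)$ by Riesz--Fischer and its sum is $f$. Finally, the inverse transform of $\mathbf 1_I$ is a cardinal sine, and multiplication by $e^{2\pi i nT\xi}$ on the Fourier side becomes translation, so each term turns into a shifted sine-kernel and one recovers exactly the interpolation series in the statement.

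The routine parts are the orthogonality computation and the term-by-term inversion, both justified by the underlying $L^2$-isometry. The two places demanding genuine care are the legitimacy of point evaluation, settled once and for all by Paley--Wiener, and the no-aliasing step, the whole argument hinging on $\mathcal F f$ fitting inside one period $I$, which is precisely the Nyquist condition $T\Omega \le 1$. I expect the main conceptual obstacle to be verifying that the exponential system is an honest orthonormal \emph{basis} of $L^2(I)$ rather than merely an orthonormal sequence, since it is exactly this completeness that makes the sampling map a bijection and simultaneously powers the norm equality and the interpolation formula.
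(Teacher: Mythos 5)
Your argument is correct, and it is the standard classical proof: identify $\mathcal F f$ with an element of $L^2(I)$, $I=[-\tfrac1{2T},\tfrac1{2T})$, recognize the samples $f(nT)$ as (multiples of) the coefficients of $\mathcal F f$ against the orthonormal basis $\{\sqrt{T}\,e^{2\pi i nT\xi}\mathbf 1_I\}_{n\in\Z}$, and get both the norm identity (Parseval plus Plancherel) and the interpolation series (term-by-term inverse transform of the basis expansion). The paper, however, deliberately does \emph{not} prove Theorem~\ref{thm:shannon} this way: it deduces it as a special case of the operator sampling theorem, Theorem~\ref{thm:main-full}, via Corollary~\ref{thmimproved-sampling}. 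There one realizes a bandlimited $m$ as the Kohn--Nirenberg symbol of the multiplication operator $M:f\mapsto mf$, observes that its spreading function is $\delta_0\otimes\widehat m$ so that $M\in OPW^{p,\infty}\big([0,T){\times}[-\tfrac\Omega2,\tfrac\Omega2)\big)$, and then specializes the operator reconstruction formula \eqref{eqn:operatorreconstruction-full} applied to $M\sum_k\delta_{kT}=\sum_k m(kT)\delta_{kT}$, which collapses to the classical interpolation series. Your route is more elementary and self-contained, and it delivers the exact isometry constant directly from Parseval (whereas the general machinery of Corollary~\ref{thmimproved-sampling} only yields a norm equivalence $\asymp$ for general $p$); the paper's route is longer but is the point of the paper, namely that the classical theorem is literally the restriction of operator sampling to the class of multiplication operators, whose spreading supports lie on the frequency-shift axis. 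Two small cautions if you align your write-up with the paper: the constant $T$ in the displayed norm identity should really be $T^{1/2}$ in the denominator position (i.e. $\|\{f(nT)\}\|_{l^2}=T^{-1/2}\|f\|_{L^2}$ with the paper's normalization of $\mathcal F$) --- your hedge about normalization is doing real work there --- and note that the endpoint case $T\Omega=1$ is handled in the paper only for $p=q=2$ by taking $s=\chi_{R_\Omega}$ (see the remark after Theorem~\ref{thm:main-full}), which corresponds exactly to your use of the full orthonormal basis of $L^2(I)$ rather than a smooth majorant of the band.
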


Theorem~\ref{thm:main-simple} is an exemplary result from our sampling theory of operators. We choose a Hilbert--Schmidt operator $H$ on $L^2(\R)$ with kernel $\kappa_H$ and Kohn-Nirenberg symbol $\sigma_H$, that is $\sigma_H(x,D)=H$ in the sense of pseudodifferential operators \cite{Hor79,Tay81}. Recall that Hilbert--Schmidt operators on $L^2(\R)$ are exactly those bounded operators $H$  with $\sigma_H\in L^2(\R^2)$ and corresponding norm of $H$. Let $\mathcal F^s$ denote the so-called symplectic Fourier transform$^2$  on $L^2(\R^{2d})$.

\begin{theorem}\label{thm:main-simple}
  For $H: L^2(\R)\longrightarrow L^2(\R)$ Hilbert--Schmidt with $\supp \mathcal F^s \sigma_H \subseteq[0, T) {\times}[-\frac \Omega 2, \frac \Omega 2)$ and  $T\Omega\leq 1$, we have
  \begin{eqnarray}
    \|H\sum_{k\in\Z}\delta_{kT}\|_{L^2(\R)}=T\|H\|_{HS},\notag 
  \end{eqnarray} and  $H$ can be reconstructed by means of
  \begin{eqnarray}
    \kappa_H(x+t,x)=\sum_{n\in\Z} \big(H\sum_{k\in\Z}\delta_{kT}\big)(t+nT)\, \frac{\sin(\pi T (x-n))}{\pi T (x-n)} \notag
  \end{eqnarray}
  with convergence in $HS(L^2(\R^2)$.
\end{theorem}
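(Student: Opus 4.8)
The plan is to reduce the operator statement, via the spreading representation of $H$, to a family of classical sampling problems indexed by the time-shift variable, and then to reassemble them. Write $\eta_H=\mathcal F^s\sigma_H$ for the spreading function of $H$, so that the hypothesis reads $\supp\eta_H\subseteq[0,T){\times}[-\tfrac\Omega2,\tfrac\Omega2)$, and recall (with the paper's normalization) the standard relation between $\eta_H$ and the kernel along anti-diagonals, namely $\kappa_H(x+t,x)=\int_{\R}\eta_H(t,\nu)\,e^{2\pi i\nu x}\,d\nu$. Fixing $t$ and setting $g_t(x):=\kappa_H(x+t,x)$, the second support condition says precisely that $\supp\mathcal F g_t\subseteq[-\tfrac\Omega2,\tfrac\Omega2)$, so each $g_t$ is a bandlimited function to which Theorem~\ref{thm:shannon} applies; the first support condition says $g_t\equiv 0$ whenever $t\notin[0,T)$, which localizes everything in the $t$-direction.

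The first substantive step is to make sense of $H\sum_{k\in\Z}\delta_{kT}$ and to identify its samples. Because $\eta_H$ is compactly supported, $\kappa_H$ is continuous with controlled decay, so $H\delta_{kT}$ is the function $x\mapsto\kappa_H(x,kT)$ and $(H\sum_{k}\delta_{kT})(x)=\sum_{k}\kappa_H(x,kT)$. I would then observe that the $t$-support of $\eta_H$ forces this sum to be locally finite: $\kappa_H(x,kT)=g_{x-kT}(kT)$ vanishes unless $x-kT\in[0,T)$, so for each $x$ exactly one summand survives and $H\sum_{k}\delta_{kT}$ is a genuine $L^2$-function. Evaluating at $x=t+nT$ with $t\in[0,T)$ leaves only $k=n$, giving the crucial identity
\[
  \big(H\textstyle\sum_{k}\delta_{kT}\big)(t+nT)=\kappa_H(t+nT,nT)=g_t(nT).
\]
Thus the samples of $H\sum_{k}\delta_{kT}$ on the coset $t+T\Z$ are exactly the classical samples $\{g_t(nT)\}_n$ of the bandlimited function $g_t$.

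With this identification both assertions follow from Theorem~\ref{thm:shannon} applied to $g_t$ for each fixed $t\in[0,T)$. The reconstruction formula $g_t(x)=\sum_n g_t(nT)\,\tfrac{\sin(\pi T(x-n))}{\pi T(x-n)}$ becomes, after substituting $g_t(x)=\kappa_H(x+t,x)$ and $g_t(nT)=(H\sum_{k}\delta_{kT})(t+nT)$, precisely the stated interpolation of $\kappa_H(x+t,x)$. For the norm identity I would change variables $(x,y)\mapsto(t,y)$ with $t=x-y$ to obtain $\|H\|_{HS}^2=\|\kappa_H\|_{L^2(\R^2)}^2=\int_0^T\|g_t\|_{L^2(\R)}^2\,dt$, using $g_t\equiv 0$ off $[0,T)$. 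On the other side, parametrizing $\R$ by $t\in[0,T)$ and $n\in\Z$ via $x=t+nT$ gives $\|H\sum_{k}\delta_{kT}\|_{L^2(\R)}^2=\int_0^T\sum_n|(H\sum_{k}\delta_{kT})(t+nT)|^2\,dt=\int_0^T\sum_n|g_t(nT)|^2\,dt$. Invoking the per-$t$ norm identity $\sum_n|g_t(nT)|^2=T^2\|g_t\|_{L^2(\R)}^2$ of Theorem~\ref{thm:shannon} and integrating yields $\|H\sum_{k}\delta_{kT}\|_{L^2(\R)}^2=T^2\|H\|_{HS}^2$, as claimed.

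The main obstacle I anticipate is the first step: rigorously justifying that the Hilbert--Schmidt operator $H$, a priori defined on $L^2(\R)$, may be applied to the tempered distribution $\sum_{k}\delta_{kT}$ and yields an $L^2$-function. This rests on transferring the compact support of $\eta_H$ into quantitative smoothness and decay of $\kappa_H$, so that the point evaluations $\kappa_H(\cdot,kT)$ are meaningful and the comb sum is summable; this is exactly where the bandlimitedness hypothesis does its real work. A secondary technical point is upgrading the per-$t$ $L^2(\R)$ convergence of the interpolation series from Theorem~\ref{thm:shannon} to convergence of the two-variable series in the Hilbert--Schmidt norm on $L^2(\R^2)$, which follows by combining the per-$t$ Parseval identity just established with dominated convergence in $t$ over the finite interval $[0,T)$.
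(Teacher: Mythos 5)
Your argument is correct, but it is not the route the paper takes. The paper never proves Theorem~\ref{thm:main-simple} directly: it derives it as the special case $p=q=2$, $w\equiv 1$, $\Lambda=T\Z$ of Theorem~\ref{thm:main-full}, whose proof runs through the Zak transform $Z_\Lambda$ of $H\sum_{\lambda'}\delta_{\lambda'}$ and the Poisson Summation Formula: one computes $Z_\Lambda\circ H\sum\delta_{\lambda'}(t,\nu)=\sum_{\lambda''}\sum_{\lambda\in\Lambda^\perp}\eta_H(t-\lambda'',\nu-\lambda)e^{2\pi i t(\nu-\lambda)}$, kills all but the $(\lambda'',\lambda)=(0,0)$ term using the cutoffs $\chi_{Q_T}(t)$ and $\widehat s(\nu)$, and reads off the reconstruction by inverting the Fourier transform in $\nu$. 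Your fiberwise reduction --- slicing the kernel along anti-diagonals into bandlimited functions $g_t$, observing that the $t$-support makes the comb sum locally finite so that $(H\sum_k\delta_{kT})(t+nT)=g_t(nT)$, and then invoking Theorem~\ref{thm:shannon} for each $t$ --- is the same decomposition carried out on the time side rather than via the Zak/Poisson machinery. What your version buys: it is more elementary, it makes completely transparent in what sense Theorem~\ref{thm:main-simple} ``extends'' the classical sampling theorem (it literally is a $t$-parametrized family of instances of it), and it handles $T\Omega=1$ directly since Theorem~\ref{thm:shannon} allows equality (the paper needs a separate remark choosing $s=\chi_{R_\Omega}$ to recover the endpoint from Theorem~\ref{thm:main-full}, which requires strict inequality). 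What the paper's version buys: it generalizes verbatim to arbitrary lattices, moderate weights, and $M_w^{p,q}$ range spaces, where no off-the-shelf Shannon theorem is available per fiber. On your acknowledged loose end --- giving rigorous meaning to $H\sum_k\delta_{kT}$ --- the paper closes it in two ways you could borrow: Theorem~\ref{thm:OPWextend} shows a priori that every $H\in OPW(M)$ with $M$ compact extends boundedly to $M^{\infty,\infty}(\R)\ni\sum_k\delta_{kT}$, and the proof of Theorem~\ref{thm:main-full} first carries out all manipulations for $h_H\in M^{1,1}(\R^2)$ (where all sums and point evaluations converge absolutely) and then passes to the general case by density. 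Your sketch of why the evaluations make sense (bandlimitation in $\nu$ gives continuity in $x$; the support in $t$ gives local finiteness) is the right mechanism, but as written it is a heuristic rather than a proof; the density argument is the cleanest way to make it one.
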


As shown in Section~\ref{section:MainResults1}, Theorem~\ref{thm:shannon} can be deduced from  the general form of Theorem~\ref{thm:main-simple} which is stated below as Theorem~\ref{thm:main-full}.

The appearance of the sampling rate $T$ in the description of the bandlimitation of the operator's Kohn--Nirenberg symbol reflects a fundamental difference between the sampling of operators and the sampling of functions.  This phenomenon is illuminated  in terms of operator identification by Theorem 3.6 in \cite{KP06} and Theorem 1.1 in \cite{PW06b}, results which we extend here by Theorems~\ref{thm:main-full2} and \ref{thm:main-full3} below. In fact, in the classical sampling theory,  the bandlimitation of a function to a large interval can be compensated by choosing a correspondingly high sampling rate.  In the here developed sampling theory for operators, only bandlimitations to sets of area less than or equal to one permit sampling and reconstruction. The bandlimitation to, for example, a rectangle of area 2 cannot be compensated by increasing the sampling rate, and, in fact, operators characterized by such a bandlimitation cannot be determined in a stable manner by the application of the operator to a single function or distribution, whether it is supported on a discrete set (which we shall refer to as sampling set below), or not.

Theorems~\ref{thm:main-full2} and \ref{thm:main-full3} in simple terms is Theorem~\ref{thm:main-simple2}  below. It can also be deduced from earlier operator identification results in  \cite{Pfa08,PW06}.
As it is customary to define Paley--Wiener spaces
\begin{eqnarray}
  PW\big(M \big)=\{f\in L^2(\R^d):\, \supp \mathcal F{f} \subseteq M \} \notag
\end{eqnarray}
to describe spaces of  functions bandlimited to $M\subseteq \R^d$, we introduce in this paper operator Paley--Wiener spaces
\begin{eqnarray}
  OPW\big(M \big)=\{H\in HS(L^2(\R^d)):\, \supp \mathcal F^s \sigma_H \subseteq M \} \notag
\end{eqnarray}
to describe operators bandlimited to $M\subseteq \R^{2d}$. In short,  $PW(M)$ and $OPW(M)$ are linked via the Kohn-Nirenberg correspondence \cite{Fol89,KN65}.

\begin{theorem}\label{thm:main-simple2} Let $\mu(M)$ denote the Lebesgue measure of the set $M\subseteq \R^2$.
\begin{enumerate}
  \item For $M$  compact with $\mu(M)<1$ exists $T>0$,  a bounded sequence $\{c_k\}$, and $A,B> 0$ with
  \begin{eqnarray}
        A \|H\|_{HS} \leq \|H\sum_{k\in\Z}c_k \delta_{kT}\|_{L^2(\R)} \leq B \|H\|_{HS},\quad H\in OPW(M).\notag
      \end{eqnarray}
      \item Let $M$ be open with $\mu(M)>1$, then, for all $g\in \mathcal S'(\R)$ and $\epsilon>0$, exists $H\in OPW(M)$ with
        \begin{eqnarray}
        \|Hg\|_{L^2(\R)} \leq \epsilon \|H\|_{HS}. \notag
      \end{eqnarray}
\end{enumerate}
\end{theorem}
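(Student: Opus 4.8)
The plan is to pass from symbols to spreading functions and read both assertions as a single ``bounded below'' question about one linear map. Writing $\eta_H=\mathcal F^s\sigma_H$ for the spreading function, we have $OPW(M)=\{H:\ \supp\eta_H\subseteq M\}$, and since $\mathcal F^s$ is unitary, $\|H\|_{HS}=\|\eta_H\|_{L^2(\R^2)}$. Using the spreading representation $H=\int_{\R^2}\eta_H(t,\nu)\,\pi(t,\nu)\,dt\,d\nu$ with the time--frequency shift $\pi(t,\nu)f(x)=e^{2\pi i\nu x}f(x-t)$, the object to control is the synthesis map $\Phi_g\colon\eta\mapsto Hg=\int_{\R^2}\eta(t,\nu)\,\pi(t,\nu)g\,dt\,d\nu$ restricted to $\eta\in L^2(M)$. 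Part 1 asserts that for a suitable comb $g=\sum_k c_k\delta_{kT}$ this map is bounded above and below on $L^2(M)$; Part 2 asserts that for every $g\in\mathcal S'(\R)$ it fails to be bounded below. The unifying mechanism I would use for both is a Zak-type periodization that fiberizes $\Phi_g$ into finite linear systems $A(z)$ indexed by a fundamental domain, whose number of unknowns (cells of $M$ per fiber) versus equations is balanced exactly at $\mu(M)=1$.

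\textbf{Part 1.} The upper bound is the easy half: for $M$ compact and $\{c_k\}$ bounded, $Hg(x)=\sum_k c_k\kappa_H(x,kT)$ samples the kernel in its second argument, and compact spreading support makes the kernel band-limited enough that Poisson summation bounds $\|Hg\|_{L^2}$ by $\|\eta_H\|_{L^2}=\|H\|_{HS}$. The lower bound is the heart, and I would obtain it by reduction to the tight-frame identity already available as Theorem~\ref{thm:main-simple}. Since $M$ is compact with $\mu(M)<1$, choose $T>0$, a period $L\in\N$, and an $L$-periodic weight sequence $\{c_k\}$ so that, after periodization in the time variable, $\Phi_g$ decouples into finite Gabor matrices $A(z)$; the condition $\mu(M)<1$ is precisely what lets $T,L$ be chosen so that every $A(z)$ is injective with a uniform lower bound, giving $A\|H\|_{HS}\le\|Hg\|$. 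In the model case $M=[0,T)\times[-\tfrac\Omega2,\tfrac\Omega2)$ with $c_k\equiv1$ this collapses to Theorem~\ref{thm:main-simple}; the periodic weights supply the extra diversity needed to separate cells that periodization folds onto one another for a general $M$.

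\textbf{Part 2.} I would first record the soft argument for $g\in L^2(\R)$, as it explains why Part 1 must use an infinite-energy comb. Here $\Phi_g^*=V_g$ is the short-time Fourier transform $V_gh(t,\nu)=\langle h,\pi(t,\nu)g\rangle$, and the orthogonality relations give $\Phi_g^*\Phi_g=\|g\|_{L^2}^2\,P$, with $P$ the projection onto $\mathrm{ran}\,V_g$, a reproducing-kernel space whose kernel has constant diagonal $1$. Thus the compression $P_MPP_M$ to $L^2(M)$ satisfies $0\le P_MPP_M\le I$ and has trace $\int_M 1=\mu(M)<\infty$; being trace class it is compact, so its eigenvalues tend to $0$ and there are unit $\eta\in L^2(M)$ with $\|P\eta\|$, hence $\|Hg\|=\|g\|_{L^2}\|P\eta\|$, arbitrarily small. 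This already rules out every $L^2$ window for any finite-measure $M$. To reach a general $g\in\mathcal S'(\R)$ — including the combs, for which $\|g\|_{L^2}=\infty$ and the projection collapses — I would argue by density: a lower bound on $OPW(M')$ for a compact $M'\subseteq M$ with $\mu(M')>1$ says the continuous system $\{\pi(t,\nu)g\}_{(t,\nu)\in M'}$ is a continuous Riesz sequence, which the density theory of Gabor systems forbids once $\mu(M')>1$. Concretely, periodizing on a lattice of covolume $<1$ and passing to the Zak transform turns this lower bound into uniform injectivity of fiber maps with more unknowns than equations — the mirror image of the count in Part 1 — which is impossible; the resulting fiberwise near-null vectors assemble into the desired $\eta\in L^2(M)$ with $\|Hg\|\le\epsilon\|H\|_{HS}$.

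\textbf{Main obstacle.} The delicate point is Part 2 for distributional windows together with the sharpness of the threshold at $\mu(M)=1$. The trace argument is insensitive to the value of $\mu(M)$ and only rules out $L^2$ windows; the genuine content is that infinite-energy combs succeed up to measure one and fail beyond it, so the threshold must come from the density theory of Gabor systems rather than from a compactness count. Making the Zak fiberization rigorous for $g\in\mathcal S'(\R)$ — guaranteeing $Hg\in L^2(\R)$, avoiding the linear-independence (Heil--Ramanathan--Topiwala) subtleties by periodizing rather than using finite point sets, and obtaining uniform fiberwise bounds — is where the real work lies. Symmetrically, in Part 1 the construction of $T$, $L$, and the weights $\{c_k\}$ that rectify an arbitrary compact $M$ of measure below one into the fiberwise-injective regime is the technical crux.
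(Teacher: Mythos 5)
Your framework---reading both halves as a bounded-below question for the synthesis map $\eta\mapsto\int\eta(t,\nu)\,\pi(t,\nu)g\,dt\,d\nu$ on $L^2(M)$, fiberized by periodization so that unknowns versus equations balance at $\mu(M)=1$---is the right one, and it is how the paper proceeds (Theorem~\ref{thm:main-simple2} is the $p=q=2$, $w\equiv1$ case of Theorems~\ref{thm:main-full2} and~\ref{thm:main-full3}); your trace-class argument excluding every $L^2$ window is also correct. But in Part 1 the step ``choose $T$, $L$ and $L$-periodic weights so that every fiber matrix $A(z)$ is injective with a uniform lower bound'' is the entire content of the theorem, and you give no construction. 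The paper covers $M$ (after the normalization of Proposition~\ref{prop:equivalentIdentifiability}) by $L$ translates of $[0,\tfrac1K)\times[0,\tfrac KL)$ with $L$ \emph{prime}; the fiber matrices are then $L\times L$ submatrices of the discrete Gabor system $\{\pi(k,\ell)c\}_{k,\ell=0,\dots,L-1}$ in $\C^L$, and their uniform invertibility is Theorem~\ref{prop:generallinear} (for $L$ prime and almost every $c\in\C^L$ all such minors are invertible). This is a nontrivial algebraic fact---open for composite $L$, which is exactly why the paper's Theorem~\ref{thm:main-full2} is restricted to $d=1$---so without it the ``extra diversity'' of the weights is an assertion, not a proof.

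In Part 2 the gap is the passage from $L^2$ windows to arbitrary $g$. The density theory of Gabor Riesz sequences concerns discrete systems with $L^2$ windows; for $g$ a Dirac comb, $\pi(t,\nu)g\notin L^2$ and neither the notion of a (continuous) Riesz sequence nor the orthogonality relations apply. Likewise your Zak fiberization presupposes lattice structure on $g$, whereas the claim is for \emph{every} $g$. The paper's mechanism is different: it builds an $l^{p,q}_{\widetilde w}$--Riesz basis of prototype operators $\{P_j\}$ in $OPW(V_M)$ with $\vol(V_M)>1$ (Lemmas~\ref{lemma:time-frequency-spreading-shift} and~\ref{lem:elementarybound}), chosen so that $P_jf$ and $\widehat{P_jf}$ decay subexponentially \emph{uniformly in $f$}; the matrix of $C_{\ga}\circ\Phi_g\circ D_{\{P_j\}}$ then has controlled off-diagonal decay regardless of $g$, and Theorem~\ref{theorem:mainresult}---an infinite-dimensional version of ``a matrix with more columns than rows has a nontrivial kernel,'' with the excess encoded by the parameter $\lllambda>1$---shows it has no bounded left inverse. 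Your counting intuition is correct, but making ``more unknowns than equations'' meaningful for a bi-infinite matrix requires exactly those decay estimates and the finite-section argument, and that is the part missing from the proposal.
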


The sampling theory developed here has roots in the work of Kozek, Pfander \cite{KP06} and Pfander, Walnut \cite{PW06b} which addressed the identifiability of slowly time--varying operators, that is, of so--called underspread operators.  Measurability or identifiability  of a given operator class describes the property that all operators of that class can be distinguished by their action on a well chosen single function or distribution. The importance of operator identification and, therefore, operator sampling in engineering and science is illustrated by the following two examples. In case of information transmission, complete knowledge of the communications channel operator at hand allows the transmitter to optimize its transmission strategy in order to transmit information close to channel capacity (see, for example, \cite{Gol05} and references therein). In radar, simply speaking, a signal is send out and the goal is to determine the nature of reflecting objects from the received echo, that is, from the  response to the radar channels input signal \cite{Sko80,KP06}.

The best known operator identification example states that time--invariant operators are fully characterized by their response to a Dirac impulse. Kailath \cite{Kai62} and later Bello \cite{Bel69} investigated the identifiability of slowly time varying channels (operators) which are defined by the support size of their spreading functions, namely of the symplectic Fourier transform of the operators' Kohn--Nirenberg symbols. In both papers, conjectures were made that were then proven in \cite{KP06}, respectively \cite{PW06b}.   The operator sampling Theorems~\ref{thm:main-full2} and \ref{thm:main-full3} extend the main  results in \cite{KP06,PW06b}, Shannon's sampling theorem, as well as the fact that time--invariant operators are identifiable by their impulse response (see Figure~\ref{fig:niklasPic}).

\begin{figure}
\begin{center}
  \includegraphics[height=14cm]{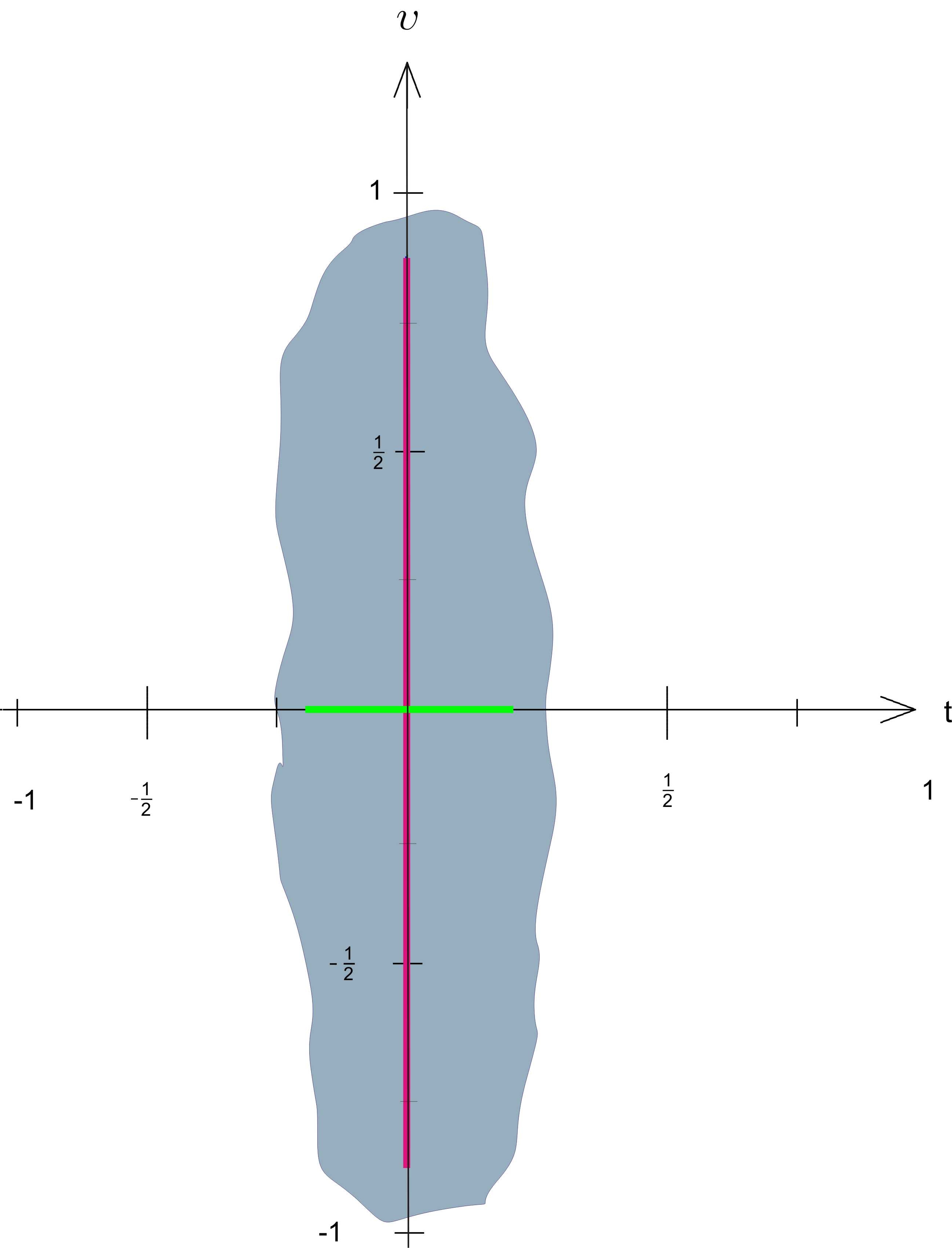}
\end{center}

\caption{In the one dimensional case, the herein developed  sampling theory for operators applies to any pseudodifferential operators whose Kohn--Nirenberg symbol is bandlimited to a compact set of Lebesgue measure less than one (for example, the blue  region above). The results extend  the classical sampling theorem described in Theorem~\ref{thm:shannon} which is equivalent to the identifiability of operators whose Kohn-Nirenberg symbol is bandlimited to a segment of the frequency shift axis (red).  Also, the fact that time--invariant operators with compactly supported impulse response can be identified from their action on the Dirac   impulse  is a special case of our results since the Kohn--Nirenberg symbols of    time--invariant operators  are bandlimited to the time shift axis (green).\label{fig:niklasPic}}
\end{figure}

The paper is structured as follows.  Section~\ref{section:notation} provides background on time--frequency analysis of functions and distributions, in particular on modulation spaces (Section~\ref{section:modulationspaces}), as well as on time--frequency analysis of pseudodifferential operators (Section~\ref{section:pseudodifferentialoperators}). In Section~\ref{section:combine} we discuss boundedness of pseudodifferential operators on modulation spaces. In Sections~\ref{section:MainResults1} and \ref{section:MainResults2}  we state and prove our main results. Section~\ref{section:outlook} contains references to recent progress  and open questions in the sampling theory for operators .

\section{Background}\label{section:notation}

$L^2(\R^d)$ denotes the Hilbert space of complex valued, Lebesgue measurable functions on Euclidean space $\R^d$  \cite{Fol99}. The {\it Fourier transformation} $\mathcal F$, respectively the {\it symplectic Fourier transformation}  $\mathcal F_s$, is the unitary operator
$
\mathcal F : L^2(\R^d) \longrightarrow L^2(\R^d),\quad f\mapsto  \widehat f = \mathcal Ff
$,
 densely defined by
$$
\widehat f(\gamma)=\int_{\R^d} f(x)\, e^{-2\pi i \gamma \cdot x}\, dx,\quad f\in L^1(\R^d){\cap} L^2(\R^d) ,
$$
respectively
 $\mathcal F^s:L^2(\R^{2d})\longrightarrow L^2(\R^{2d}) $
with
$$
\mathcal F^s F (t,\nu){=}\iint_{\R^{2d}} F(x,\xi)\, e^{-2\pi i [(t,\nu), (x,\xi)]}\, dx\,d\xi {=}\iint_{\R^{2d}} F(x,\xi)\, e^{-2\pi i (\nu \cdot x {-} \xi\cdot t)}\, dx\,d\xi ,\ \  F{\in} L^1(\R^{2d}){\cap} L^2(\R^{2d}) ,
$$
where $[\cdot,\cdot]$ denotes the symplectic form on $\R^{2d}$. Throughout the paper, integration is with respect to the Lebesgue measure which we denote by $\mu$.

The Fourier transform defines isomorphisms on the Frechet space of Schwartz functions $\Sc(\R^d)$ and on its dual $\Sc'(\R^d)$ of tempered distributions (equipped with the
weak-$\ast$ topology).  Note that $\Sc'(\R^d)$ contains constant functions, {\em Dirac's delta}
$\delta:f\mapsto f(0)$, and weighted {\em Shah distributions}
$\sum_{n\in\Z^d}c_n \delta_{kT}$,  $T\in(\R^+)^d$,
with $\{c_n\}$ having at most polynomial growth.

Similarly to the {\it Fourier transformation}, the {\em time shift operator}
$T_t$, $t\in\R^d$, given by $T_tf(x)=f(x-t)$ and the {\em modulation operator} $M_\weight$, $\weight \in \R^d$, $M_\weight f(x)=e^{2\pi i \weight{\cdot} x } f(x)$, act as unitary operators on $L^2(\R)$ and they are  isomorphisms on $\Sc(\R^d)$ and $\Sc'(\R^d)$. Note that $M_\weight$ is also called {\em frequency shift operator} since $\widehat{M_\weight f}=T_\weight \widehat f$. Further, we refer to $\pi(\lambda)=\pi(t,\nu)=M_\nu T_t$ for $\lambda=(t,\nu)\in\R^{2d}$ as {\em time--frequency shift operator}. Note that we have $ \mathcal F \circ \pi(t,\nu) =  e^{2\pi i t\nu}\pi(\nu,-t) \circ \mathcal F$, that is, $ \mathcal F \pi(t,\nu) f=  e^{2\pi i t\nu}\pi(\nu,-t) \widehat f$ for $f\in \mathcal S'(\R^d)$. 

The goal of {\em operator  identification} is to select, for given  spaces $X$ and $Y$ of functions or distributions defined on $\R^d$ and a given space of  linear operators $\mathcal{H}$ mapping $X$ to $Y$, an element $g\in X$ which induces a continuous, open, and injective map $\Phi_g: {\mathcal{H}}\longrightarrow Y(\R^d),\ H\mapsto Hg$
(see Figure~\ref{figure:identification}).

\begin{definition}\label{def:operatoridentification}
Let $X$ be a set, $Y$  a topological vector space, and
$\mathcal Z$ a topological vector space of operators mapping $X$ to $Y$. The space $\mathcal Z$ is {\em identifiable by $g\in
X$} if $\Phi_g: {\mathcal Z }\longrightarrow Y,\ H\mapsto Hg$ is continuous, open and injective.  In the case that $Y$ and $\mathcal H$ are normed spaces, this reads: there exist $A,B>0$ with
\begin{equation}\label{eqn:identifiable}
    A \, \|H\|_{\mathcal{Z}}
        \leq
            \|Hg\|_Y
        \leq
            B \, \|H\|_{\mathcal{Z}},\quad H\in{\mathcal{Z}}\,.
\end{equation}
If we can
choose  $g \in
X=X(\R^d)$ of the form $g=\sum_j c_j \delta_{x_j}$, $x_j\in \R^d$ and $c_j\in\C$ for $j\in \Z^d$, as identifier,  then we say that $\mathcal Z$ mapping $X$ to $Y$ permits operator sampling and we call
$\{x_j\}$ a set of sampling  for $\mathcal Z$ with respective  sampling weights $\{c_j\}$. We refer to $g$ as a sampling function for the operator class $\mathcal Z$.
\end{definition}

In the following, we shall abbreviate norm equivalences as the one given in \eqref{eqn:identifiable} using the symbol $\asymp$. For example, \eqref{eqn:identifiable} becomes
\begin{equation}\notag 
  \|H\|_{\mathcal{Z}}
        \asymp
            \|Hg\|_Y\,, \quad  H\in{\mathcal{Z}}\,.
\end{equation}

\begin{figure}
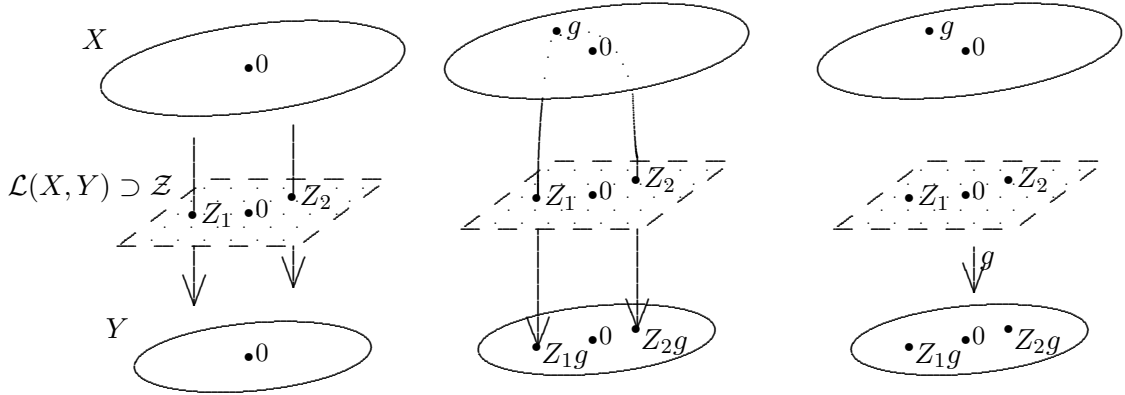
\label{figure:identification}
\begin{minipage}[c][8cm][c]{7cm}
\[
\beginpicture
\setcoordinatesystem units <.6cm,.6cm>
%
\startrotation by 0.99 0.13 about 4 6.75 \ellipticalarc axes ratio
3.5:1 360 degrees from 1.2 6.2 center at 4 6.75 \stoprotation \put
{$X$} at 0.5 7.4 \put {\small 0} at 4.2 6.85 \put {\circle*{3}} at 4
6.75
%
\setdashes <0.25cm> \setlinear \plot 1 2.8  5 2.8  7 4.3  3 4.3  1
2.8 / \hshade 2.8 .8 5 4.3 3 7 / \put {$ {\mathcal L}(X,Y)\supset
{\mathcal Z}$} at 0.4 4.1 \put {\small 0} at 4.2 3.65 \put
{\circle*{3}} at 4 3.55
%
\setsolid \startrotation by 0.99 0.09 about 4 0.35 \ellipticalarc
axes ratio 3.5:1 360 degrees from 1.5 0.1 center at 4 0.35
\stoprotation \put {$Y$} at 1 1 \put {\small 0} at 4.2 0.45 \put
{\circle*{3}} at 4 0.35
%
\plot 2.7 5.2  2.7 3.5 / \put {\circle*{3}} at 2.75 3.5 \arrow
<0.4cm> [0.375,0.75]     from 2.7 2.8 to 2.7 1.5 \put {$Z_1$} at 3.2
3.5
\plot 4.9 5.5  4.9 3.9 / \put {\circle*{3}} at 4.95 3.9 \arrow
<0.4cm> [0.375,0.75]     from 4.9 2.8 to 4.9 1.9 \put {$Z_2$} at 5.4
3.9
\endpicture
\]
\end{minipage}
%
%
%
\begin{minipage}[c][8cm][c]{4.5cm}
\[
\beginpicture
\setcoordinatesystem units <.6cm,.6cm>
%
\startrotation by 0.99 0.13 about 4 6.75 \ellipticalarc axes ratio
3.5:1 360 degrees from 1.2 6.2 center at 4 6.75 \stoprotation
\put {\small 0} at 4.2 6.85 \put {\circle*{3}} at 4 6.75 \put {$g$}
at 3.5 7.2 \put {\circle*{3}} at 3.2 7.2
%
\setdashes <0.25cm> \setlinear \plot 1 2.8  5 2.8  7 4.3  3 4.3  1
2.8 / \hshade 2.8 .8 5 4.3 3 7 /
\put {\small 0} at 4.2 3.65 \put {\circle*{3}} at 4 3.55
%
\setsolid \startrotation by 0.99 0.09 about 4 0.35 \ellipticalarc
axes ratio 3.5:1 360 degrees from 1.5 0.1 center at 4 0.35
\stoprotation
\put {\small 0} at 4.2 0.45 \put {\circle*{3}} at 4 0.35
%
\setquadratic \setdots \plot 3.2 7.1 3 7 2.8 5.7 / \setsolid \plot
2.8 5.7 2.72 4.6 2.7 3.5 / \setlinear \plot 2.7 4  2.7 3.5 / \put
{\circle*{3}} at 2.75 3.5 \arrow <0.4cm> [0.375,0.75]     from 2.7
2.8 to 2.7 0.2 \put {$Z_1$} at 3.2 3.5 \put {$Z_1g$} at 3.3 0 \put
{\circle*{3}} at 2.75 0.2
\setquadratic \setdots \plot 3.2 7.1 4.2 7 4.8 5.8 / \setsolid \plot
4.8 5.8 4.85 4.8 4.9 4.4 / \setlinear \plot 4.9 4.4  4.9 3.9 / \put
{\circle*{3}} at 4.95 3.9 \arrow <0.4cm> [0.375,0.75] from 4.9 2.8
to 4.9 0.6 \put {$Z_2$} at 5.4 3.9 \put {$Z_2g$} at 5.5 0.3 \put
{\circle*{3}} at 4.95 0.6
\endpicture
\]
\end{minipage}
%
%
\hspace{0.2cm}
%
\begin{minipage}[c][8cm][c]{4.5cm}
\[
\beginpicture
\setcoordinatesystem units <.6cm,.6cm>
%
\startrotation by 0.99 0.13 about 4 6.75 \ellipticalarc axes ratio
3.5:1 360 degrees from 1.2 6.2 center at 4 6.75 \stoprotation
\put {\small 0} at 4.2 6.85 \put {\circle*{3}} at 4 6.75 \put {$g$}
at 3.5 7.2 \put {\circle*{3}} at 3.2 7.2
%
\setdashes <0.25cm> \setlinear \plot 1 2.8  5 2.8  7 4.3  3 4.3  1
2.8 / \hshade 2.8 .8 5 4.3 3 7 /
\put {\small 0} at 4.2 3.65 \put {\circle*{3}} at 4 3.55
%
\setsolid \startrotation by 0.99 0.09 about 4 0.35 \ellipticalarc
axes ratio 3.5:1 360 degrees from 1.5 0.1 center at 4 0.35
\stoprotation
\put {\small 0} at 4.2 0.45 \put {\circle*{3}} at 4 0.35
%
\put {\circle*{3}} at 2.75 3.5
\put {$Z_1$} at 3.2 3.5 \put {$Z_1g$} at 3.3 0 \put {\circle*{3}} at
2.75 0.2
\put {\circle*{3}} at 4.95 3.9
\put {$Z_2$} at 5.4 3.9 \put {$Z_2g$} at 5.5 0.3 \put {\circle*{3}}
at 4.95 0.6
%
\arrow <0.4cm> [0.375,0.75]     from 4.1 2.4 to 4.1 1.4 \put {$g$}
at 4.4 2.1
\endpicture
\]
\end{minipage}

\caption{Illustration of the operator identification and sampling problem.
We seek an element $g\in X$ in the domain of the operator class $\mathcal Z$
which induces a map from $\mathcal Z$ into the range space $Y$ which is continuous, open, and injective. If we can choose $g=\sum_j c_j \delta_{x_j}$, then $\mathcal Z$ permits operator sampling.}
\end{figure}

We shall describe in Section~\ref{section:modulationspaces} the distribution spaces and in Section~\ref{section:pseudodifferentialoperators} the pseudodifferential operator spaces considered here. Section~\ref{section:combine} discusses boundedness of the considered pseudodifferential operators on modulation spaces.

\subsection{ Modulation spaces}\label{section:modulationspaces}

To describe the full scope of operator sampling, we need to employ recent results in time--frequency analysis, in particular, we have to enter the realm of so-called modulation spaces. As Theorems~\ref{thm:main-simple} and \ref{thm:main-simple2} indicate, all results presented include the special case of Hilbert--Schmidt operators and the Hilbert space of square integrable functions as range space, and we advise readers without significant expertise in time--frequency analysis to focus on this case during a first reading.

Feichtinger introduced modulation spaces  in \cite{Fei81}. Modulation space theory was further developed  by Feichtinger and Gr\"ochenig as special case of their coorbit theory \cite{FG89}:  for $\rho$ being a square integrable unitary and irreducible representation of a locally compact group $G$ on a Hilbert space $H$ and $Y$ being a  Banach space  of functions on $G$, we consider, for appropriate $\varphi\in H$,   the so--called {\em voice transform} $V_\varphi : H\longrightarrow Y$ given by $V_\varphi f (x)=\langle f, \rho(x)\varphi \rangle$, $x\in G$. Given an appropriate Banach space Gelfand triple $X\subseteq H\subseteq X'$, the {\it coorbit space} $M_Y$ consists of those $f\in X'$ with $\|f\|_{M_Y}=\| V_\varphi f\|_Y<\infty$ \cite{FK98}.

The special case of modulation spaces is based on the Schr\"odinger representation of the reduced Weyl--Heisenberg group.  The corresponding voice transform simplifies to the short--time Fourier transform, that is, for any Schwartz class function $\varphi\neq 0$ we consider
\begin{eqnarray}
  V_\varphi f(\lambda)=\langle f , \pi(\lambda)\varphi\rangle=\mathcal F( f \, T_t \overline{\varphi})(\nu),\quad \lambda=(t,\nu) \in \R^{2d}, \notag
\end{eqnarray}
which is well defined for any $f\in \mathcal S'(\R^d)$ \cite{Gro01}. Note that throughout this paper, dual pairings $\langle\cdot,\cdot\rangle$ are linear in the first component and antilinear in the second. Moreover, any choice of $\varphi\in \mathcal S(\R^d)\setminus\{0\}$ can be used to define modulations spaces (with equivalent norms), but as is customary, we shall choose a normalized Gaussian, namely $\varphi(x)=\ga(x)=2^{\frac d 4}\,e^{-\pi\|x\|^2_2}$, $x\in\R^d$.

The role of the Banach space $Y$ in  coorbit space theory is attained in modulation space theory  by weighted mixed $L^p$ spaces which we shall describe now. For a measurable function $f$ on $\R^d$ and $p=(p_1,\ldots,p_d)$, $1\leq p_1,\ldots,p_d\leq \infty$, we define mixed $L^p(\R^d)$ spaces by finiteness of
$$
    \|f\|_{L^p}=\Big(   \int \Big( \ldots \Big(\int \Big(\int
            |f(x_1,\ldots,x_d)|^{p_1} dx_1 \Big)^{p_2/p_1} dx_2 \Big)^{p_3/p_2}\ldots
dx_{d-1}\Big)^{p_d/p_{d-1}}
            dx_d\Big)^{1/p_d},
$$
with the usual adjustments if some $p_k=\infty$ \cite{BP61}. The mixed $l^p(\Z^d)$ spaces are defined accordingly.

Note the sensitivity to the order of exponentiation and integration. For example, for $f(x,y)=1$ if $|x-y|\leq 1$ and $f(x,y)=0$ else, we have $\sup_{x} \int |f(x,y)| \, dy =2$ but  $ \int \sup_{x}|f(x,y)| \, dy =\infty$, that is, $ f\in L^{1,\infty}(\R^2)$ but $g\notin L^{\infty, 1}(\R^2)$ where $g(x,y)=f(y,x)$.

A locally integrable function $v:\R^{d}\longrightarrow \R^+_0$ with
$$
    v(x+y)\leq v(x)\, v(y),\quad x,y\in \R^{d},
$$
is called {\it submultiplicative weight}. For example, $w_s(x)=(1+\|x\|)^s$, $s\geq 0$, is a submultiplicative weight on $\R^{d}$. If  $v$ is a submultiplicative weight and the locally integrable function $w:\R^{d}\longrightarrow \R^+_0$ satisfies for some $C>0$
$$
    w(x+y)\leq C\, w(x)\, v(y),\quad x,y\in \R^{d},
$$
then $w$ is a {\em $v$-moderate weight function}. The class of $v$-moderate weight functions on $\R^d$ is denoted by $\mathcal M_v(\R^d)$. Note that for $s<0$, for example, $1{\otimes}w_{s}(x,\xi)=(1+\|\xi\|)^{s}$ is not submultiplicative, but $1{\otimes}w_{s}$ is $1{\otimes}w_{-s}$ -moderate. If $w$ is a $v$-moderate weight function with respect to some submultiplicative weight, then we simply say $w$ is {\em moderate}. Note that for any  moderate weight function on $\R^d$ exists $\gamma,C>0$ with  $\frac 1 C e^{-\gamma\|x\|_\infty} \leq  w(x)\leq C e^{\gamma\|x\|_\infty}$ (see Lemma 4.2 in \cite{Gro07b}). A moderate weight function $w$ on $\R^d$ is a {\it subexponential weight function} if there exists $\gamma, C>0$ and $0<\beta <1$ with
$$\tfrac 1 C e^{-\gamma\|x\|^\beta_\infty} \leq  w(x)\leq C e^{\gamma\|x\|^\beta_\infty}.$$

Weight functions on discrete groups such as $\Z^d$ are defined accordingly. See \cite{Gro07b} for a thorough discussion on the role of weight functions in time--frequency analysis.

Given a $v$-moderate weight function $w$, then the Banach space $L_w^p(\R^d)$  is defined through finiteness of the norm $\|f\|_{L^p_w}=\|wf\|_{L^p}$. The space $L_w^p(\R^d)$ is shift invariant, shift operators are bounded on $L_w^p(\R^d)$ but not  isometric if $w$ is not constant. Replacing $\R^d$ with $\Z^d$, or with a full rank lattice $\Lambda=A\Z^d$, $A\in \R^{d{\times}d}$ invertible, either equipped with the counting measure gives the definition of $l^p_w(\Z^d)$, respectively $l^p_w(\Lambda)$. If $w$ is a moderate weight on $\R^d$, then its restriction to $\Lambda$, which we denote by $\widetilde w$, is moderate as well.

\begin{definition} For $p=(p_1,\ldots,p_d)$ and
$q=(q_1,\ldots,q_d)$, $1\leq p_k,q_k\leq \infty$, and $w$ moderate on $\R^{2d}$, we define   modulation spaces by
\begin{eqnarray}
  M_w^{p,q}(\R^d)=\left\{f\in \mathcal S'(\R^d):\ V_{\ga}f \in L_w^{p,q}(\R^{2d})  \right
    \} \label{eqn:modulationSpace}
\end{eqnarray}
 \cite{Fei81,Gro01}. The modulation space $M_w^{p,q}(\R^d)$ is a shift invariant Banach space with norm $\|f\|_{M_w^{p,q}} = \| w V_{\ga}f  \|_{L^{p,q}}$. If $w\equiv 1$, then we write $M^{p,q}(\R^d)=M_w^{p,q}(\R^d)$. If $p_1=\ldots=p_d$ and $q_1=\ldots=q_d$  then we abbreviate $M_w^{p_1,q_1}(\R^d)= M_w^{(p_1,\ldots,p_d),(q_1,\ldots,q_d)}(\R^d)$.
\end{definition}
Below we shall use the fact that replacing $\ga$ with any other $\varphi\in \mathcal S(\R^d)\setminus \{0\}$ in \eqref{eqn:modulationSpace} defines the identical space with an equivalent norm \cite{Gro01}. Note that if $p_1\leq p_2$, \ $q_1\leq q_2$, and $w_1\geq c w_2$ for some $c>0$, then $M^{p_1,q_1}_{w_1}$ embeds continuously in $M^{p_2,q_2}_{w_2}$, and, consequently, if $w_1\asymp w_2$ then $M_{w_2}^{p,q}(\R^d)=M_{w_1}^{p,q}(\R^d)$ with equivalent norms.

The space $M^{1,1}(\R^d)$ is the Feichtinger algebra, often also denoted by $S_0(\R^d)$, and $M^{\infty,\infty}(\R^d)$ is its dual $S_0'(\R^d)$. In fact, in general we have $M_w^{p,q}(\R^d)'=M_{1/w}^{p',q'}(\R^d)$ for $1\leq p,q<\infty$ and $\frac 1 p +\frac 1 {p'}=1$, $\frac 1 q +\frac 1 {q'}=1$. Note that $M_{ 1{\otimes}w_{\rm s}}^{2,2}(\R^d)$ is also known as Bessel potential spaces, in particular $L^2(\R^d)=M^{2,2}(\R^d)$.

To illustrate the chosen order of exponentiation and integration in the definition of the modulation space $M_w^{p,q}(\R^d)$ for $d>1$ and $p\neq
q$, we state exemplary that $f\in M_{1{\otimes}w_s}^{(2,3),(4,5)}(\R^d)$  if and only if
$$
    \int\Big(\int\Big(\int\Big(\int \Big|(1+\sqrt{\nu_1^2+\nu_2^2})^s \, V_{\ga}f(t_1,t_2,\nu_1,\nu_2)\Big|^{2}
            \,dt_1\Big)^{\frac 3 2}\,dt_2\Big)^{\frac 4 3}\,d\nu_1\Big)^{\frac 5 4}\,d\nu_2\leq \infty.
$$
Clearly $f{\otimes} g\in M_{w_1{\otimes} w_2 }^{(p_1,p_2),(q_1,q_2)}(\R^{2d})$ if and only if $f \in
M_{w_1 }^{p_1,q_1}(\R^d)$ and $ g\in M_{w_2 }^{p_2,q_2}(\R^d)$. In this case,
$\|f{\otimes}
g\|_{M_{w_1{\otimes} w_2 }^{(p_1,p_2),(q_1,q_2)}}=\|f\|_{M_{w_1 }^{p_1,q_1}}\|g\|_{M_{ w_2 }^{p_2,q_2}}$.

For compactly supported and bandlimited functions, modulation spaces reduce to weighted mixed $L^p(\R^d)$ spaces. The following is a straight forward generalization of results in \cite{Fei89,Oko09}.

\begin{lemma}\label{lemma:compactbounded} Let $p=(p_1,\ldots,p_d)$ and
$q=(q_1,\ldots,q_d)$ with $1\leq p_k,q_k\leq \infty$, let $w=w_1\otimes w_2$ be a moderate weight function  on $\R^{2d}$, and suppose $M\subseteq \R^d$ compact. Then
\begin{enumerate}
    \item $\displaystyle \| f\|_{M_w^{p,q}}\asymp \|\widehat f\|_{L_{w_2}^{q}},\quad  f\in \mathcal S'(\R^d), \ \supp f\subseteq M $;
     \item $\displaystyle \| f\|_{M_w^{p,q}}\asymp \|f\|_{L_{w_1}^{p}},\quad  f\in \mathcal S'(\R^d), \ \supp \widehat f\subseteq M $.
\end{enumerate}
\end{lemma}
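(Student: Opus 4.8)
\emph{Strategy.} I would prove the band-limited assertion (ii) in detail and deduce the time-limited assertion (i) from it. The link is the fundamental identity of time--frequency analysis: since $\widehat{\ga}=\ga$ for the chosen Gaussian window, one has $|V_{\ga}f(t,\nu)|=|V_{\ga}\widehat f(\nu,-t)|$, and $\supp f\subseteq M$ holds exactly when $\widehat f$ is band-limited (to $-M$). Thus (i) for $f$ is a restatement of (ii) for $\widehat f$, with the roles of $(p,w_1)$ and $(q,w_2)$ interchanged, modulo a reversal of the order of the two integrations in the mixed norm. Moreover the right-hand side of (ii) involves neither $q$ nor $w_2$: the entire frequency integration contributes only a finite constant, by the concentration established below. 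Consequently the value of the norm is unchanged if the two integrations are performed in the opposite order, and applying (ii) to the band-limited function $\widehat f$ (with $q$ and $w_2$ now in the role of $p$ and $w_1$) yields exactly (i). So it suffices to treat (ii).

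\emph{Reduction to a convolution.} Fix $f$ with $\supp\widehat f\subseteq M$. From $V_{\ga}f(t,\nu)=e^{-2\pi i\,\nu\cdot t}(f*M_\nu\ga)(t)$ we get $|V_{\ga}f(t,\nu)|=|(f*M_\nu\ga)(t)|$, and hence
\[
  \|f\|_{M^{p,q}_w}=\Big\|\,\nu\longmapsto \|f*M_\nu\ga\|_{L^p_{w_1}}\,\Big\|_{L^q_{w_2}}.
\]
The decisive feature is that $\mathcal F(f*M_\nu\ga)=\widehat f\cdot T_\nu\ga$ is supported in the \emph{fixed} compact set $M$ for every $\nu$, with amplitude controlled by $\sup_{\eta\in M}\ga(\eta-\nu)$, which decays like a Gaussian in $\dist(\nu,M)$. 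This is the concentration in the frequency variable that trivializes the outer integration.

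\emph{The two bounds.} For the upper bound I would write $f*M_\nu\ga=f*k_\nu$ with $k_\nu=\mathcal F^{-1}(\Phi\,T_\nu\ga)$ for a cutoff $\Phi\equiv1$ on $M$, so that the weighted mixed-norm Young inequality $\|f*k_\nu\|_{L^p_{w_1}}\le\|k_\nu\|_{L^1_{v_1}}\|f\|_{L^p_{w_1}}$ (with $v_1$ a submultiplicative majorant of $w_1$) reduces matters to $\|k_\nu\|_{L^1_{v_1}}\lesssim e^{-c\,\dist(\nu,M)^2}$; integrating against $w_2(\nu)^q$ and using that a moderate $w_2$ grows subexponentially then gives $\|f\|_{M^{p,q}_w}\lesssim\|f\|_{L^p_{w_1}}$. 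For the lower bound I would restrict the $\nu$-integration to a bounded ball $B$ on which $\ga(\cdot-\nu)$ is bounded below on $M$, invert the Gaussian multiplier on the fixed band by a kernel $\ell_\nu=\mathcal F^{-1}(\Psi/T_\nu\ga)$ (with $\Psi\equiv1$ on $M$), which satisfies $f=(f*M_\nu\ga)*\ell_\nu$ and $\sup_{\nu\in B}\|\ell_\nu\|_{L^1_{v_1}}<\infty$, whence $\|f\|_{L^p_{w_1}}\le C\|f*M_\nu\ga\|_{L^p_{w_1}}$ for all $\nu\in B$; integrating over $B$ and using $\inf_B w_2>0$ yields $\|f\|_{M^{p,q}_w}\gtrsim\|f\|_{L^p_{w_1}}$.

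\emph{Main obstacle.} The crux is realizing $k_\nu$, and especially the inverse kernel $\ell_\nu$, as genuine elements of $L^1_{v_1}$ for an \emph{arbitrary} moderate weight. A hard $C_c^\infty$-cutoff gives only a Schwartz kernel, whose polynomial decay lies in $L^1_{v_1}$ precisely when $v_1$ is subexponential; and since no bounded, strip-analytic multiplier can agree with $1/\ga(\cdot-\nu)$ on a set $M$ with interior, exact inversion in $L^1_{v_1}$ is impossible for weights of genuine exponential growth. This is not a real gap: a band-limited function with finite $L^p_{w_1}$-norm for such a weight must vanish (its transform would be compactly supported yet analytic on a strip), so only the subexponential range is substantive, and there Gevrey-class cutoffs produce kernels in $L^1_{v_1}$ with the required Gaussian decay and uniform bounds. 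Assembling these weighted multiplier and Wiener-amalgam estimates, together with the mixed-norm form of Young's inequality needed to accommodate the vector exponents $p$ and $q$, is the only work beyond the convolution reduction, and it is exactly what the generalization of \cite{Fei89,Oko09} supplies.
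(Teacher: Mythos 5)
The paper does not actually prove Lemma~\ref{lemma:compactbounded}: it is stated with only the remark that it is ``a straight forward generalization of results in [Fei89, Oko09]'', so there is no in-paper argument to compare against line by line. Your proposal supplies essentially the standard proof underlying those citations, and it is correct in strategy: the identity $|V_\ga f(t,\nu)|=|(f\ast M_\nu\ga)(t)|$, the observation that $\mathcal F(f\ast M_\nu\ga)=\widehat f\cdot T_\nu\ga$ lives in the fixed compact band with Gaussian decay in $\dist(\nu,M)$, weighted Young for the upper bound, and inversion of the non-vanishing Gaussian multiplier over a bounded set of $\nu$ for the lower bound. Two points deserve tightening. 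First, your reduction of (i) to (ii) via $|V_\ga f(t,\nu)|=|V_\ga\widehat f(\nu,-t)|$ forces a reversal of the order of integration in the mixed norm, and for vector exponents $p\neq q$ Minkowski's inequality only goes one way; the clean fix is to upgrade the per-slice Young bound to the pointwise tensor estimate $|V_\ga f(t,\nu)|\leq \rho\bigl(\dist(\nu,M)\bigr)\,(|f|\ast\Theta)(t)$ with $\rho$ Gaussian-like and $\Theta\in L^1_{v_1}$ (obtained from your kernels $k_\nu$ by integration by parts), after which the mixed norm factorizes and the order becomes irrelevant in both the upper and lower bounds. Second, your dismissal of genuinely exponential weights via ``a bandlimited function in $L^p_{w_1}$ must vanish'' is fine for weights bounded below by $e^{\gamma\|x\|}$, but a moderate $w_1$ can grow exponentially in some directions while decaying in others, and there the triviality argument is not immediate; since the paper only ever applies the lemma with polynomial or subexponential weights, this is an edge case rather than a gap, but it should be flagged as a restriction if you want the statement at the full stated generality.
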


Modulation spaces allow for descriptions based on growth conditions of so-called Gabor coefficients \cite{Gro01}. These descriptions rely on the following terminology.

\begin{definition}\label{definition:p-frames}
  Let $X$ be a Banach space,
  $1\leq p_1,\ldots,p_d\leq \infty$, and let $w$ be moderate on the full rank lattice $\Lambda$.

  \begin{enumerate}

    \item  $\{g_\lambda\}_{\lambda \in \Lambda}\subseteq X'$ is called $l^p_{ w}$--frame for  $X$,
  if the analysis operator
  $\displaystyle C_{\{g_\lambda\}}: X\longrightarrow l_{w}^p(\Lambda),
    \quad f\mapsto
        \{\langle f, g_\lambda \rangle\}_{\lambda\in\Lambda}\,,
  $
 is well defined and
  \begin{eqnarray}
   \|f\|_X \asymp \| \{\langle f, g_\lambda \rangle\} \|_{l^p_{ w}},\quad f\in X\,.
      \label{equation:NormEquivalence}
    \end{eqnarray}

    \item $\{g_\lambda\}_{\lambda \in \Lambda}\subseteq X$ is  called $l^p_{w}$--Riesz basis in $X$,
  if the synthesis operator $
    D_{\{g_\lambda\}}: l^p_{ w}(\Lambda)\longrightarrow X,\quad \{c_\lambda\}_{\lambda\in\Lambda} \mapsto
            \sum_\lambda c_\lambda g_\lambda
$,  is well defined and
  \begin{eqnarray}
      \|\{c_\lambda\}\|_{l^p_{w}}\asymp \|\sum_\lambda c_\lambda g_\lambda  \|_{X},\quad \{c_\lambda\} \in l^p_{ w}(\Lambda).
      \label{equation:BanachRiesz}
    \end{eqnarray}
  \end{enumerate}
\end{definition}

In the classical Hilbert space setting $X=X'=H$ and $l^p_w(\Z^{2d})=l^2(\Z^{2d})$, the above entails the definition of  Hilbert space frames and Riesz basis sequences. In the Hilbert space theory, condition \eqref{equation:NormEquivalence} implies that $\displaystyle C_{\{g_\lambda\}}$ has a bounded left inverse, but in the general Banach space setting, \eqref{equation:NormEquivalence} alone does not guarantee the existence of such a left inverse. Therefore, the condition of a bounded left inverse $\displaystyle C_{\mathcal F}$ is frequently included in the definition of frames for Banach spaces \cite{Chr03,Gro91,FZ98}.

Note that for any $1\leq p \leq \infty$, $w$ moderate, $l^p_w$--Riesz bases form unconditional bases for their closed linear span. This follows directly from \eqref{equation:BanachRiesz} and Definition 12.3.1 and Lemma 12.3.6 in \cite{Gro01}.

For $g\in \mathcal S(\R^d)$ and $\Lambda$ being a full rank lattice in $\R^{2d}$, we set $(g,\Lambda)=\{\pi(\lambda)g\}_{\lambda\in \Lambda}$.  For $w$ moderate on $\R^{2d}$, set $\widetilde w_\lambda=w(\lambda)$. Results as Theorem~\ref{thm:GaborEssentails} are important tools in modulation space theory, see, for example, Theorem 20 in \cite{Gro04} or Theorem 6.11 in \cite{Gro07b}.

\begin{theorem}\label{thm:GaborEssentails}
Let  $1\leq p,q\leq \infty$ and  let $w$ be moderate on $\R^{2d}$. Let $\Lambda$ be a full rank lattice in $\R^{2d}$ and $g\in \mathcal S (\R^d)$.

\begin{enumerate}
 \item  If $(g,\Lambda)$ is a frame for $L^2(\R^d)$, then $(g,\Lambda)$ is an $l^{p,q}_{\widetilde w}$--frame for $M^{p,q}_w(\R^d)$.
     \item   If $(g,\Lambda)$ is a Riesz basis in $L^2(\R^d)$, then $(g,\Lambda)$ is an $l^{p,q}_{\widetilde w}$--Riesz basis in $M^{p,q}_w(\R^d)$.
\end{enumerate}
\end{theorem}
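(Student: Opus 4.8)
The plan is to reduce both parts to two facts about Gabor analysis with a Schwartz window: first, that the analysis and synthesis operators attached to any window in $\mathcal{S}(\R^d)$ are bounded on the relevant modulation and sequence spaces, and second, that the canonical dual window of a Gabor frame (or Riesz sequence) with Schwartz generator is again Schwartz. For $\psi\in\mathcal{S}(\R^d)$ write $C_\psi f=\{\langle f,\pi(\lambda)\psi\rangle\}_{\lambda\in\Lambda}=\{V_\psi f(\lambda)\}_{\lambda\in\Lambda}$ and $D_\psi\{c_\lambda\}=\sum_\lambda c_\lambda\pi(\lambda)\psi$. The first fact is that, for $w$ moderate,
$$C_\psi:M^{p,q}_w(\R^d)\longrightarrow l^{p,q}_{\widetilde w}(\Lambda),\qquad D_\psi:l^{p,q}_{\widetilde w}(\Lambda)\longrightarrow M^{p,q}_w(\R^d)$$
are bounded. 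This I would obtain from the standard pointwise estimate $|V_\psi f|\lesssim |V_{\ga}f|*|V_{\ga}\psi|$ for the short--time Fourier transform, the fact that $V_{\ga}\psi$ lies in the weighted amalgam space governing the moderate weight at hand whenever $\psi\in\mathcal{S}(\R^d)$, and a sampling estimate that controls the $l^{p,q}_{\widetilde w}$--norm of a lattice restriction by the $L^{p,q}_w$--norm of a continuous majorant; boundedness of $D_\psi$ follows by duality using $M^{p,q}_w(\R^d)'=M^{p',q'}_{1/w}(\R^d)$.

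The crux is the second fact. The frame operator $S=S_{g,\Lambda}=D_gC_g$ commutes with every $\pi(\lambda)$, $\lambda\in\Lambda$, and by the first fact is bounded on each $M^{p,q}_w(\R^d)$. The essential step is to show that $S$ is \emph{simultaneously} invertible on the whole scale $\{M^{p,q}_w\}$ and that $\tilde g:=S^{-1}g\in\mathcal{S}(\R^d)$. This is exactly a non--commutative Wiener's lemma: the Walnut/Janssen representation expresses $S$ through a rapidly decaying sequence in the twisted convolution algebra over the adjoint lattice, and the Gr\"ochenig--Leinert spectral--invariance theorem guarantees that the inverse again lies in that algebra, whence $\tilde g$ inherits the decay and smoothness placing it in $\mathcal{S}(\R^d)$ and $S^{-1}$ is bounded on every $M^{p,q}_w$. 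I would flag this as the main obstacle, since it is the only point that goes beyond bookkeeping and relies on the symmetry of the underlying weighted convolution algebra.

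With these two facts in hand the frame case (part 1) is immediate. Boundedness of $C_g$ gives the upper inequality $\|C_g f\|_{l^{p,q}_{\widetilde w}}\lesssim\|f\|_{M^{p,q}_w}$. For the lower inequality I would use that $S$ is invertible on $M^{p,q}_w$, so that the reconstruction identity valid on $L^2(\R^d)$ extends to
$$f=S^{-1}Sf=\sum_{\lambda\in\Lambda}\langle f,\pi(\lambda)g\rangle\,\pi(\lambda)\tilde g=D_{\tilde g}C_g f,\qquad f\in M^{p,q}_w(\R^d),$$
the extension being justified by norm density of $\mathcal{S}(\R^d)$ in $M^{p,q}_w$ for $p,q<\infty$ and by weak--$\ast$ density together with boundedness of the operators when $p$ or $q=\infty$. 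Since $\tilde g\in\mathcal{S}(\R^d)$, the first fact bounds $D_{\tilde g}$, so $\|f\|_{M^{p,q}_w}=\|D_{\tilde g}C_g f\|_{M^{p,q}_w}\lesssim\|C_g f\|_{l^{p,q}_{\widetilde w}}$; together these give \eqref{equation:NormEquivalence}.

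The Riesz basis case (part 2) runs along the same lines with the roles of analysis and synthesis interchanged. Boundedness of $D_g$ gives $\|\sum_\lambda c_\lambda\pi(\lambda)g\|_{M^{p,q}_w}\lesssim\|\{c_\lambda\}\|_{l^{p,q}_{\widetilde w}}$. For the converse I would pass to the biorthogonal system $\{\pi(\lambda)\tilde g\}$ dual to $\{\pi(\lambda)g\}$ inside its closed span, with $\tilde g\in\mathcal{S}(\R^d)$ again furnished by the spectral--invariance step; biorthogonality yields $c_\lambda=\langle\sum_\mu c_\mu\pi(\mu)g,\pi(\lambda)\tilde g\rangle=(C_{\tilde g}D_g\{c_\mu\})_\lambda$, so that boundedness of $C_{\tilde g}$ gives $\|\{c_\lambda\}\|_{l^{p,q}_{\widetilde w}}\lesssim\|\sum_\mu c_\mu\pi(\mu)g\|_{M^{p,q}_w}$. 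The two estimates together are \eqref{equation:BanachRiesz}, completing the proof.
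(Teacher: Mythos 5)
Your proposal is correct and follows essentially the same route as the paper: boundedness of the analysis and synthesis operators for Schwartz windows on $M^{p,q}_w$, the fact that the canonical dual window $\widetilde g$ is again Schwartz, and extension of the $L^2$ reconstruction identity $D_{\widetilde g}C_g=I$ by (weak-$\ast$) density. The only cosmetic difference is that you source the regularity of $\widetilde g$ from Gr\"ochenig--Leinert spectral invariance, whereas the paper simply cites Janssen's result directly; the architecture of the argument is otherwise identical, including the biorthogonality argument for the Riesz basis case, which the paper leaves as ``follows similarly.''
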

\begin{proof}
 {\it 1.} Let $g\in \mathcal S (\R^d)$ with $(g,\Lambda)$ is a frame for $L^2(\R^d)$. Let $\widetilde g$ generate the canonical dual frame of $(\widetilde g, \Lambda)$ of $(g,\Lambda)$ \cite{Gro01}. We have $\widetilde g \in\calS(\R^d)$ \cite{Jan95} and conclude that both, $C_{(g,\Lambda)}:M^{p,q}_w(\R^d) \longrightarrow l^{p,q}_{\widetilde w}(\Lambda)$ and $D_{(\widetilde g,\Lambda)}: l^{p,q}_{\widetilde w}(\Lambda) \longrightarrow M^{p,q}_w(\R^d)$ are bounded operators. As $D_{(\widetilde g,\Lambda)} {\circ} C_{(g,\Lambda)}$ is the identity on $L^2(\R^d)$, we can use a density argument to obtain $D_{(\widetilde g,\Lambda)} {\circ} C_{(g,\Lambda)}$ is the identity on  $M^{p,q}_w(\R^d)$. We conclude that $C_{(g,\Lambda)}$ is bounded below.

The proof of {\it 2.} follows similarly.
\end{proof}

\subsection{Time--frequency analysis of pseudodifferential operators}\label{section:pseudodifferentialoperators}

The framework of Hilbert--Schmidt operators suffices to develop the basics of our sampling theory for operators. But important operators such as convolution operators, multiplication operators, and even the identity  are not compact and thereby fall outside the realm of Hilbert--Schmidt operators. Rather than focusing on operators with kernel in $L^2(\R^{2d})$, we shall consider kernels and symbols in modulation spaces.

To formulate a widely applicable sampling theory for operators, we use the general correspondence of operators to distributional kernels given by the Schwartz kernel theorem (see, for example, \cite{Hor07}).

\begin{theorem}\label{thmschwartz}
For any linear and continuous operator $H:\mathcal S(\R^d) \longrightarrow \mathcal S'(\R^d)$
exists a unique $\kappa_H\in \mathcal S'(\R^{2d})$ with $\langle Hf, g\rangle = \langle\kappa_H , \overline{f} \otimes g \rangle $, $f,g\in \mathcal S(\R^d)$.
\end{theorem}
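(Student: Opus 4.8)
The plan is to realize $H$ through its matrix with respect to the Hermite basis and to assemble the kernel as a Hermite expansion on $\R^{2d}$. First I would pass from the operator to the bilinear form $B(f,g)=\langle Hf,g\rangle$ on $\mathcal S(\R^d)\times\mathcal S(\R^d)$. Since $H$ maps into $\mathcal S'(\R^d)$ continuously (with the weak-$*$ topology) and each $g$ defines a continuous functional, $B$ is separately continuous; because $\mathcal S(\R^d)$ is a Fr\'echet space, the Banach--Steinhaus theorem upgrades this to joint continuity, so there are a constant $C>0$, an integer $N$, and a continuous seminorm $p_N$ with
\[
  |B(f,g)|\leq C\, p_N(f)\, p_N(g),\qquad f,g\in\mathcal S(\R^d).
\]

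Next I would introduce the Hermite functions $\{h_\alpha\}_{\alpha\in\N_0^d}$, the real orthonormal eigenfunctions of the harmonic oscillator $-\Delta+\pi^2\|x\|_2^2$ with eigenvalues comparable to $|\alpha|$. The point is that these functions simultaneously form an orthonormal basis of $L^2(\R^d)$ and describe the Schwartz topology: $f\in\mathcal S(\R^d)$ exactly when its coefficients $\langle f,h_\alpha\rangle$ decay faster than any polynomial in $|\alpha|$, and $u\in\mathcal S'(\R^d)$ exactly when $\langle u, h_\alpha\rangle$ grows at most polynomially; moreover the seminorms $p_N$ above are dominated by $f\mapsto \big(\sum_\alpha (1+|\alpha|)^{2N}|\langle f,h_\alpha\rangle|^2\big)^{1/2}$, so $p_N(h_\alpha)\leq C(1+|\alpha|)^N$. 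Setting $a_{\alpha\beta}=\langle Hh_\alpha, h_\beta\rangle=B(h_\alpha,h_\beta)$, the joint-continuity estimate then yields the polynomial bound
\[
  |a_{\alpha\beta}|\leq C\,(1+|\alpha|)^N (1+|\beta|)^N.
\]

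With these bounds in hand I would define
\[
  \kappa_H=\sum_{\alpha,\beta} a_{\alpha\beta}\, h_\alpha\otimes h_\beta.
\]
Since $\{h_\alpha\otimes h_\beta\}$ is precisely the Hermite basis of $L^2(\R^{2d})$ and the coefficients $a_{\alpha\beta}$ grow at most polynomially, the series converges in $\mathcal S'(\R^{2d})$ and defines a tempered distribution. To verify the kernel identity I would test against basis elements: using that the $h_\alpha$ are real, orthonormality gives $\langle \kappa_H, \overline{h_\alpha}\otimes h_\beta\rangle = a_{\alpha\beta}=\langle Hh_\alpha, h_\beta\rangle$, and since finite linear combinations of Hermite functions are dense in $\mathcal S(\R^d)$ while both sides are separately continuous in $f$ and $g$, the identity $\langle Hf,g\rangle=\langle \kappa_H,\overline f\otimes g\rangle$ extends to all $f,g\in\mathcal S(\R^d)$. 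Uniqueness follows because $\{\overline f\otimes g\}$ spans a dense subspace of $\mathcal S(\R^{2d})$, so a distribution annihilating all such tensors must vanish.

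I expect the main obstacle to be the transition from the purely topological continuity of $H$ to the quantitative polynomial control of the matrix coefficients $a_{\alpha\beta}$ together with the convergence of the kernel series --- this is exactly where the nuclearity of $\mathcal S(\R^d)$ enters, made concrete through the Hermite characterization of the Schwartz and tempered-distribution topologies. An alternative that bypasses the explicit computation is the abstract nuclear kernel theorem: since $\mathcal S(\R^d)$ is nuclear one has $\mathcal S(\R^d)\,\widehat\otimes\,\mathcal S(\R^d)\cong\mathcal S(\R^{2d})$, and every separately continuous bilinear form on $\mathcal S(\R^d)\times\mathcal S(\R^d)$ corresponds to an element of the dual $\mathcal S'(\R^{2d})$; this is the route underlying the cited reference \cite{Hor07}, but the Hermite argument above keeps the construction of $\kappa_H$ fully explicit.
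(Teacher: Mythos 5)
The paper offers no proof of Theorem~\ref{thmschwartz}: it is invoked as the classical Schwartz kernel theorem with a pointer to \cite{Hor07}, so there is nothing in the text to compare your argument against line by line. Your proof is correct and complete as a self-contained argument. The two nontrivial ingredients are both handled properly: the upgrade from separate to joint continuity of $(f,g)\mapsto\langle Hf,g\rangle$ is exactly the Baire-category (Banach--Steinhaus) statement for bilinear maps on Fr\'echet spaces, and the polynomial control of $a_{\alpha\beta}=\langle Hh_\alpha,h_\beta\rangle$ together with the Hermite characterization of $\mathcal S(\R^{2d})$ and $\mathcal S'(\R^{2d})$ is the standard way of making the nuclearity of $\mathcal S$ concrete. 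You are also careful about the paper's convention that $\langle\cdot,\cdot\rangle$ is antilinear in the second slot: using that the $h_\alpha$ are real, the identity $\langle\kappa_H,\overline{h_\alpha}\otimes h_\beta\rangle=a_{\alpha\beta}$ is consistent with sesquilinearity on both sides, and the extension by density of finite Hermite sums in $\mathcal S(\R^d)$ (and of their tensor products in $\mathcal S(\R^{2d})$, which also gives uniqueness) is legitimate. Compared with the route in the cited reference --- which builds $\kappa_H$ by testing against translated tensor-product bump functions and a regularization argument --- your Hermite construction is more explicit and meshes nicely with the rest of the paper: it exhibits $H\mapsto(a_{\alpha\beta})$ as an isomorphism under which Hilbert--Schmidt operators correspond exactly to $\ell^2$ matrices, which is the identity $\|H\|_{HS}=\|\kappa_H\|_{L^2}$ used in \eqref{eqn:HSequality}. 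The only cosmetic quibble is that you call $B$ bilinear when, with the paper's pairing, it is sesquilinear; the argument is unaffected.
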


Alternatively to $\kappa_H$, we can consider the so--called time--varying impulse response $h_H\in S'(\R^{2d})$ of $H:\mathcal S(\R^d) \longrightarrow \mathcal S'(\R^d)$ which is formally given by
\begin{eqnarray*}
   h_H(x,t)=\kappa_H(x,x-t),\quad Hf(x)=\int h_H(x,t) f(x-t)\,dt.
\end{eqnarray*}
The Kohn--Nirenberg symbol $\sigma_H$ of an operator $H:\mathcal S(\R^d) \longrightarrow \mathcal S'(\R^d)$  is densely defined by $\sigma_H=\mathcal F_{t\to \xi} h_H$, that is,
\begin{eqnarray}
    \sigma_H(x,\xi)
            =\int \kappa_H(x,x-t)\,e^{-2\pi i t\xi}\, dt, \quad  Hf(x)=\int \sigma_H(x,\xi)\, \widehat f (\xi) \, e^{2\pi i x \xi }\,d\xi \notag
\end{eqnarray}
\cite{Fol89,KN65}. Note that the  $n$-th order linear differential operator $D:f\mapsto \sum_{n=0}^N a_n(x) f^{(n)}(x)$ has Kohn--Nirenberg symbol $\sigma_D(x,\xi)=\sum_{n=0}^N a_n(x)  (2\pi i \xi)^n$ which is polynomial in $\xi$. Pseudodifferential operator classes, for example, those considered by H\"ormander,  have symbols $\sigma_H$ which are not necessarily polynomial in $\xi$ but which satisfy corresponding polynomial growth conditions \cite{Hor07}.

Additionally, in time--frequency analysis and in communications engineering, the spreading function $\eta_H$
 is commonly used to describe $H$:
\begin{eqnarray}\eta_H=\mathcal F^s \sigma_H, \quad
Hf(x) =  \iint\eta_H (t,\nu)\,M_\nu  T_t f(x)
 \, dt\,d\nu\, .  \label{equation:spreadingrepresentationBochner}
\end{eqnarray}
Equation \eqref{equation:spreadingrepresentationBochner} can be validated weakly by first integrating with respect to $x$ in
\begin{eqnarray}\notag 
    \langle H f ,\varphi \rangle
        &=& \int\!\int\!\!\!\!\!\int \eta_H(t,\nu)\pi(t,\nu)f(x)
            \overline{\varphi(x)} \, dt\,d\nu\ dx 
        =  \langle \eta_H,V_f \varphi \rangle, \quad f,\varphi\in {\mathcal S}(\R^d), 
\end{eqnarray}
where $V_f \varphi (t,\nu)=\langle \varphi, \pi(t,\nu) f \rangle$ is the short time Fourier transform  defined above. Equation \eqref{equation:spreadingrepresentationBochner} illustrates that support restrictions on $\eta_H$ reflect limitations on the maximal time and frequency shifts which the input signals undergo: $Hf$ is a continuous superposition of time--frequency shifted versions of $f$ with weight function $\eta_H$ \cite{GP08,KP06,PW06b}.
Moreover, as $h(x,t)=\int \eta(t,\nu)e^{2\pi i \nu x}\,d\nu$,  the
condition $\supp \eta_H(t,\cdot)\subseteq [-\frac b 2,\frac b 2]$, $t\in\R$, excludes high frequencies and therefore rapid change of the time--varying impulse response $h(x,t)$ as a function of $x$. In the time--invariant case, $\kappa(x,x-t)=h(x,t)=h(t)$ is, in fact, independent of $x$. These observations  illuminate the role of support constraints on spreading functions in the analysis of {\it slowly time--varying} communications channels \cite{Bel63,Zad52}. Additional aspects on the use of pseudodifferential operator calculus in communications can be found in \cite{Stro06}.

\subsection{Boundedness of pseudodifferential operators on modulation spaces}\label{section:combine}

Theorem~\ref{thm:OPWextend} in Section~\ref{section:MainResults1} provides the upper bound in \eqref{eqn:identifiable} for Theorems~\ref{thm:main-simple2}, \ref{thm:main-full}, \ref{thm:main-fullb}, \ref{thm:symplectic}, and \ref{thm:main-full2}. It follows from Theorem~\ref{theorem:boundedoperators} which generalizes Theorem~4.2 in \cite{Tof07} as well as results in \cite{CG03,Cza03,GH99,GH04,Tac94a,Tof04} where, generally, the case $p_3=q_3$ and $p_4=q_4$ in the notation  below was considered. Recall that $p'$ denotes the conjugate exponent of  $1\leq p\leq \infty$, that is $\frac 1 {p} + \frac 1 {p'} =1$.

\begin{theorem}\label{theorem:boundedoperators}
Assume $1\leq p_1,p_2,p_3, p_4,q_1,q_2,q_3,q_4\leq \infty$ with $p_4\leq q_3,q_4$,
\begin{eqnarray}
  1+ \frac{1}{p_2}\leq \frac{1}{p_1} + \frac{1}{p_3} + \frac{1}{p_4} \quad \text{and} \quad  1+ \frac{1}{q_2} \leq \frac{1}{q_1} + \frac{1}{q_3} + \frac{1}{q_4}\,.
    \label{equation:ArithmeticOperators}
  \end{eqnarray}
Let the moderate weight functions $\weight,\weight_1,\weight_2$ satisfy
\begin{eqnarray}
  \weight(x,\xi,\nu,t)\geq c\ \frac{\weight_2(x,\nu+\xi)}{\weight_1(t-x,\xi)}  \notag 
\end{eqnarray}
with $c>0$. Then, for some $C>0$,
\begin{eqnarray} \| L_\sigma f \|_{M_{\weight_2}^{p_2,q_2}}\leq C\ \| \sigma \|_{M_\weight^{(p_3,q_3),(q_4,p_4)}} \ \|f\|_{M_{\weight_1}^{p_1,q_1}},\quad f\in M_{\weight_1}^{p_1,q_1}(\R^d),\ \sigma \in M_\weight^{(p_3,q_3),(q_4,p_4)}(\R^{2d}) \label{eqn:norminequalitysigma} ,
\end{eqnarray}
consequently, $L_\sigma:\,  M_{\weight_1}^{p_1,q_1}(\R^d)\longrightarrow  M_{\weight_2}^{p_2,q_2}(\R^d)$ is bounded for $\sigma \in M_\weight^{(p_3,q_3),(q_4,p_4)}(\R^{2d})$ and $L_\sigma$ denoting the operator corresponding to the Kohn-Nirenberg symbol $\sigma$.
\end{theorem}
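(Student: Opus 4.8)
The plan is to reduce the operator bound to a weighted mixed--norm convolution inequality between the short--time Fourier transforms of $\sigma$ and of $f$. Fix the Gaussian window $\ga$ on $\R^d$ (used both for $f$ and for $L_\sigma f$) and a Schwartz window $\Phi$ on $\R^{2d}$ for $\sigma$; since modulation space norms are independent of the window up to equivalence, all constants may absorb these choices. By density of $\mathcal S$ it suffices to prove \eqref{eqn:norminequalitysigma} for $f,\sigma\in\mathcal S$ and then extend.

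The heart of the argument, and the step I expect to be hardest, is a pointwise kernel estimate of the form
\[
  |V_\ga(L_\sigma f)(x,\xi)|\ \leq\ \iint_{\R^{2d}} |V_\Phi\sigma(x,\xi-\nu,\nu,t)|\ |V_\ga f(t-x,\xi-\nu)|\ d\nu\,dt .
\]
To obtain it I would start from the spreading representation $L_\sigma f=\iint \eta_\sigma(t,\nu)\,M_\nu T_t f\,dt\,d\nu$ with $\eta_\sigma=\mathcal F^s\sigma$, write $V_\ga(L_\sigma f)(x,\xi)=\langle L_\sigma f,\pi(x,\xi)\ga\rangle$, and use the composition rule for time--frequency shifts to express $\langle \pi(t,\nu)f,\pi(x,\xi)\ga\rangle$ through $V_\ga f$ evaluated at a shifted point. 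Passing from the spreading function $\eta_\sigma$ to the windowed object $V_\Phi\sigma$ (which is what makes the four--fold mixed norm $M_\weight^{(p_3,q_3),(q_4,p_4)}$ appear) is the delicate part: one must choose $\Phi$ so that the Kohn--Nirenberg correspondence intertwines the windowing of $\sigma$ with the windowing of $L_\sigma f$, and then track the exact affine relation among the four arguments of $V_\Phi\sigma$ and the two arguments of $V_\ga f$ and of $V_\ga(L_\sigma f)$.

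With the kernel estimate in hand, the weight hypothesis does exactly the required bookkeeping. Replacing $\xi$ by $\xi-\nu$ in $\weight(x,\xi,\nu,t)\ge c\,\weight_2(x,\nu+\xi)/\weight_1(t-x,\xi)$ gives $\weight(x,\xi-\nu,\nu,t)\ge c\,\weight_2(x,\xi)/\weight_1(t-x,\xi-\nu)$, so that the target weight on the left of the kernel estimate is dominated by the symbol weight times the input weight evaluated at precisely the points appearing on the right:
\[
  \weight_2(x,\xi)\,|V_\ga(L_\sigma f)(x,\xi)|\ \lesssim\ \iint \weight(x,\xi-\nu,\nu,t)\,|V_\Phi\sigma(x,\xi-\nu,\nu,t)|\ \weight_1(t-x,\xi-\nu)\,|V_\ga f(t-x,\xi-\nu)|\ d\nu\,dt .
\]

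Finally I would take the mixed $L^{p_2,q_2}$ norm in $(x,\xi)$ of both sides. The right--hand side is a convolution--type pairing, in the variables $t$ (coupled to $x$) and $\nu$ (coupled to $\xi$), of the weighted symbol transform measured in $M_\weight^{(p_3,q_3),(q_4,p_4)}$ against the weighted input transform measured in $M_{\weight_1}^{p_1,q_1}$. A three--factor Young inequality in each of the two directions yields the bound by $\|\sigma\|_{M_\weight^{(p_3,q_3),(q_4,p_4)}}\,\|f\|_{M_{\weight_1}^{p_1,q_1}}$: the exponent requirements are exactly $1+1/p_2\le 1/p_1+1/p_3+1/p_4$ in the $x$/$t$ direction and $1+1/q_2\le 1/q_1+1/q_3+1/q_4$ in the $\xi$/$\nu$ direction. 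The condition $p_4\le q_3,q_4$ is what permits carrying out the successive Minkowski integral inequalities in the prescribed order of integration for the symbol norm (recall the sensitivity of mixed norms to the order of integration noted in Section~\ref{section:modulationspaces}); this ordering issue, rather than the Young exponents themselves, is the only subtlety of the last step. A density argument then extends \eqref{eqn:norminequalitysigma} to all $f\in M_{\weight_1}^{p_1,q_1}$ and $\sigma\in M_\weight^{(p_3,q_3),(q_4,p_4)}$, and boundedness of $L_\sigma$ follows.
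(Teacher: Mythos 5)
Your outline is essentially correct, but it takes a genuinely different route from the paper. You argue ``on the output side'': you dominate $V_\ga(L_\sigma f)$ pointwise by an integral operator applied to $|V_\ga f|$ whose kernel is $|V_\Phi\sigma|$ (the standard almost-diagonalization estimate for the Kohn--Nirenberg correspondence, as in Gr\"ochenig--Heil and Cordero--Gr\"ochenig), and then run weighted Young/Minkowski inequalities. The paper instead argues by duality: it bounds $|\langle L_\sigma f, g\rangle|$ via H\"older's inequality for weighted mixed modulation norms, pairing $\sigma_H$ against $\mathcal F_{t\to\xi}\bigl(\overline{f}\otimes g\circ\left(\begin{smallmatrix}1&-1\\ 1&0\end{smallmatrix}\right)\bigr)$, and the whole burden is carried by Lemma~\ref{lem:modulationspacemembership}, which rests on the \emph{exact} product identity $V_\Ga\bigl(\overline f\otimes g\circ A\bigr)(x,t,\nu,\xi)=V_\ga g(x,\nu+\xi)\,\overline{V_\ga f(x-t,\xi)}$ followed by two applications of Young's inequality. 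The two routes are dual to each other and impose the same exponent arithmetic; your affine bookkeeping (output at $(x,\nu+\xi)$, input at $(t-x,\xi)$, matching the stated weight hypothesis) and your reading of the roles of \eqref{equation:ArithmeticOperators} and of $p_4\le q_3,q_4$ (the Minkowski reordering needed for the prescribed order of integration in $M_\weight^{(p_3,q_3),(q_4,p_4)}$) are both consistent with what the paper actually does. What each approach buys: yours gives a direct, quotable pointwise domination of $V_\ga(L_\sigma f)$ and avoids dual exponents; the paper's duality trick converts the one step you correctly flag as hardest --- passing from the spreading representation to a windowed estimate on $\sigma$ --- into an elementary closed-form computation, since the relevant ``symbol'' is that of a rank-one operator.

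Two caveats. First, your central kernel estimate is asserted and only sketched; it is true, but it is precisely the nontrivial content of the theorem, so a complete write-up must actually derive it (the cleanest way is to compute the Rihaczek distribution of $\pi(w)\varphi\otimes\overline{\pi(z)\varphi}$ as a time--frequency shift of a fixed window $\Phi$, then expand $f$ by the STFT inversion formula). Second, a ``three-factor Young inequality'' is not an off-the-shelf statement for the bilinear form $\iint G(x,\xi-\nu,\nu,t)F(x-t,\xi-\nu)\,d\nu\,dt$ measured in the specific nested order $(p_3,q_3),(q_4,p_4)$; you would need to spell out the chain of H\"older/Young/Minkowski steps in the same detail the paper devotes to \eqref{equation:YoungFirst}--\eqref{equation:YoungSecond} and to the reduction from the equality constraints \eqref{equation:Arithmetic} to the inequality constraints \eqref{equation:ArithmeticOperators} via the embeddings $M^{\tilde p,\tilde q}_w\hookrightarrow M^{p,q}_w$. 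Neither point is a wrong turn; they are the places where the work actually lives.
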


Theorem~\ref{theorem:boundedoperators} is a consequence of the following lemma.

\begin{lemma} \label{lem:modulationspacemembership} Assume $1\leq p_1,p_2,p_3, p_4,q_1,q_2,q_3,q_4\leq \infty$ with  $p_3\leq p_1, p_2,p_4$, \ $q_3\leq q_1,q_2, q_4$,
 \begin{eqnarray}
  \frac{1}{p_1} +\frac{1}{p_2}=\frac{1}{p_3} +\frac{1}{p_4} \quad \text{and} \quad \frac{1}{q_1} +\frac{1}{q_2}=\frac{1}{q_3} +\frac{1}{q_4}.
    \label{equation:Arithmetic}
  \end{eqnarray}
  Let the moderate weight functions $w, w_1, w_2$ satisfy
\begin{eqnarray}
  w(x,t,\nu,\xi)\leq w_1(t-x,\xi)w_2(x,\nu+\xi)\,. \label{eqn:weightinequality}
\end{eqnarray}
Then for $\Ga(x,\xi)=\ga(x)\ga(x-t)$, we have
\begin{eqnarray}
  \Big( \int \Big( \int \Big( \int \Big( \int &&\hspace{-.9cm}
    \big| V_\Ga
    \overline{f}{\otimes} g {\circ} \left( \begin{smallmatrix} 1 & -1 \\ 1 & \ 0 \end{smallmatrix} \right)(x,t,\nu,\xi)\, w(x,t,\nu,\xi)\big|^{p_3}\, dx \Big)^{\frac {p_4}{p_3}}\, dt \Big)^{\frac {q_3}{p_4}}\,d\xi \Big)^{\frac {q_4}{q_3}}\, d\nu\Big)^{\frac 1 {q_4}}\notag \\ &\leq& \| f\|_{M_{\weight_1}^{p_1,q_1}}\ \|g\|_{M_{\weight_2}^{p_2,q_2}}, \quad f\in M_{\weight_1}^{p_1,q_1}(\R^d), g\in M_{\weight_2}^{p_2,q_2}(\R^d) \, ,\label{eqn:norminequality}
\end{eqnarray}
  where $ \overline{f}{\otimes} g {\circ} \left( \begin{smallmatrix} 1 & -1 \\ 1 & \ 0 \end{smallmatrix} \right)(x,t)=\overline{f}(x-t)g(x)$.

\end{lemma}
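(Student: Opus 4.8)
The plan is to reduce \eqref{eqn:norminequality} to two successive applications of Young's convolution inequality, after first writing the two--dimensional short--time Fourier transform on the left--hand side in closed form. Expanding $V_\Ga$ from its definition, the integrand is $\overline{f(s_1-s_2)}\,g(s_1)\,\ga(s_1-x)\,\ga(s_1-s_2-x+t)\,e^{-2\pi i(\nu s_1+\xi s_2)}$, and the substitution $u=s_1-s_2$ decouples the double integral by Fubini. Since $\ga$ is real and even, this produces the factorization
\[
   V_\Ga\big(\overline f{\otimes} g{\circ}\left(\begin{smallmatrix}1&-1\\1&\ 0\end{smallmatrix}\right)\big)(x,t,\nu,\xi)=V_\ga g(x,\nu+\xi)\,\overline{V_\ga f(x-t,\xi)}\,.
\]
This identity is the engine of the whole estimate: the tensor window $\Ga$ transported by the unimodular change of variables $(x,t)\mapsto(x-t,x)$ splits the voice transform on $\R^{2d}$ into the product of the one--dimensional voice transforms of $f$ and $g$, the first evaluated at the shifted time $x-t$ with frequency $\xi$, the second at $x$ with frequency $\nu+\xi$. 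Taking absolute values and invoking the weight hypothesis \eqref{eqn:weightinequality} pointwise bounds the weighted integrand by $G(x-t,\xi)\,K(x,\nu+\xi)$, where $G(\rho,\xi)=|V_\ga f(\rho,\xi)|\,w_1(-\rho,\xi)$ and $K(y,\eta)=|V_\ga g(y,\eta)|\,w_2(y,\eta)$.

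Second, I would evaluate the nested mixed norm from the inside out. For fixed $\nu,\xi$ the innermost two integrations (over $x$ with exponent $p_3$, then over $t$ with exponent $p_4$) have the structure of a cross--correlation in $(x,t)$: with $a=|G(\cdot,\xi)|^{p_3}$ and $b=|K(\cdot,\nu+\xi)|^{p_3}$ one recognizes $\int a(x-t)b(x)\,dx$ as a convolution in $t$. Young's inequality then bounds its $L^{p_4/p_3}_t$ norm by $\|a\|_{p_1/p_3}\,\|b\|_{p_2/p_3}$, and the three conjugacy constraints required, namely $p_1/p_3,\,p_2/p_3,\,p_4/p_3\geq 1$ together with $1+\tfrac{p_3}{p_4}=\tfrac{p_3}{p_1}+\tfrac{p_3}{p_2}$, are exactly the hypotheses $p_3\leq p_1,p_2,p_4$ and the left identity in \eqref{equation:Arithmetic}. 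This collapses the first two integrations to $\|G(\cdot,\xi)\|_{p_1}\,\|K(\cdot,\nu+\xi)\|_{p_2}$, which is once again a function of $\xi$ times a function of $\nu+\xi$.

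Third, the two remaining integrations (over $\xi$ with exponent $q_3$, then over $\nu$ with exponent $q_4$) have the identical cross--correlation structure in the frequency pair $(\xi,\nu)$, with $\nu$ entering only through $\nu+\xi$. A second application of Young's inequality, whose hypotheses are now $q_3\leq q_1,q_2,q_4$ and the right identity in \eqref{equation:Arithmetic}, separates the frequency variables and yields $\|G\|_{L^{p_1,q_1}}\,\|K\|_{L^{p_2,q_2}}$. By the definition of the modulation--space norm, and because the weight hypothesis \eqref{eqn:weightinequality} is calibrated precisely so that $w_1$ and $w_2$ pair with the arguments of $V_\ga f$ and $V_\ga g$ appearing in the factorization, these two factors are $\|f\|_{M_{w_1}^{p_1,q_1}}$ and $\|g\|_{M_{w_2}^{p_2,q_2}}$ (the reflection in the first argument of $G$ being absorbed by this pairing and by the evenness of $\ga$). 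Assembling the three steps gives \eqref{eqn:norminequality}.

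I expect the main obstacle to lie in the bookkeeping of the two middle steps rather than in the factorization. One must keep the four integrations in the prescribed nested order, check that the variable $\xi$ shared by $G$ and $K$ does not obstruct the separation, and verify that the exponent identities in \eqref{equation:Arithmetic} together with the orderings $p_3\leq p_1,p_2,p_4$ and $q_3\leq q_1,q_2,q_4$ match Young's hypotheses in every limiting case, including those where some exponents equal $\infty$. The closed--form factorization in the first step is conceptually central but computationally routine, whereas fitting the mixed--norm Young inequality to the precise arithmetic of \eqref{equation:Arithmetic} is where the real care is required.
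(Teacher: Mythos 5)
Your proposal is correct and follows essentially the same route as the paper: the identical factorization $V_\Ga\bigl(\overline f{\otimes}g{\circ}\left(\begin{smallmatrix}1&-1\\1&\ 0\end{smallmatrix}\right)\bigr)(x,t,\nu,\xi)=V_\ga g(x,\nu+\xi)\,\overline{V_\ga f(x-t,\xi)}$, followed by one application of Young's inequality in the time pair $(x,t)$ and a second in the frequency pair $(\xi,\nu)$, with the exponent bookkeeping $1/r_1+1/s_1=1+p_3/p_4$, $1/r_2+1/s_2=1+q_3/q_4$ matching \eqref{equation:Arithmetic} and the weights absorbed at the outset via \eqref{eqn:weightinequality}. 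No substantive difference from the paper's argument.
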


\begin{proof}
For $g,f\in \mathcal S(\R^d)$, we compute
\begin{eqnarray}
V_\Ga\overline{f}{\otimes} g {\circ} \left( \begin{smallmatrix} 1 & {-}1 \\ 1 & \ 0 \end{smallmatrix} \right) (x,t,\nu,\xi)
    &=& \iint g (x')\, \overline{f} (x'{-}t')  e^{{-} 2\pi i(x'\nu{+}t'\xi)} \ga (x'{-}x)\ga (x'{-}x {-} (t'{-}t))\, dt'\, dx' \notag \\
    &=& \int g(x') e^{{-} 2\pi i x'\nu}  \ga (x'{-}x) \int \overline{f}(s) e^{{-}2\pi i (x'{-}s)\xi} \ga (s{-}(x{-}t))\, ds\, dx'\notag \\
    &=& \int g(x') e^{{-} 2\pi i x'(\nu{+}\xi)}  \ga (x'{-}x) \, dx'\ \overline{  \int f(s) e^{{-}2\pi i s\xi} \ga (s{-}(x{-}t))\, ds}\notag \\
    &=& V_{ \ga} g (x, \nu{+}\xi)\ \overline{V_\ga f (x{-}t, \xi) }\,.
\end{eqnarray}
Assume $1\leq p_1,p_2,p_3,p_4, q_1,q_2,q_3,q_4 <\infty$. For $\weight\equiv 1$ and $\weight_1=\weight_2\equiv 1$ we have
\begin{eqnarray}
  \Big( \int \Big( \int \Big( \int \Big( \int &&\hspace{-.9cm}
    \big| V_\Ga
    \overline{f}{\otimes} g {\circ} \left( \begin{smallmatrix} 1 & -1 \\ 1 & \ 0 \end{smallmatrix} \right)(x,t,\nu,\xi)\big|^{p_3}\, dx \Big)^{\frac {p_4}{p_3}}\, dt \Big)^{\frac {q_3}{p_4}}\,d\xi \Big)^{\frac {q_4}{q_3}}\, d\nu\Big)^{\frac 1 {q_4}}\notag \\
  &=&
  \Big(\int \Big(\int \Big(\int \Big(\int |V_\ga f(x{-}t,\xi)\ V_\ga g (x, \nu+\xi)|^{p_3} \, dx\Big)^{\frac{p_4}{p_3}}\,dt\Big)^{\frac{q_3}{p_4}}\,d\xi\Big)^{\frac{q_4}{q_3}}d\nu\Big)^{\frac{1}{q_4}}
\label{eqn:firstequation}
\\
&\leq&
  \Big(\int \Big( \int \|V_\ga f(\cdot,\xi)^{p_3}\|_{L^{r_1}} \|V_\ga g (\cdot, \nu+\xi)^{p_3}\|_{L^{s_1}}\Big)^{\frac{p_4}{p_3}\frac{q_3}{p_4}} \,d\xi\Big)^{\frac{q_4}{q_3}}\,d\nu\Big)^{\frac{1}{q_4}} \label{equation:YoungFirst} \\
&=&
  \Big(\int \Big( \int \|V_\ga f(\cdot,\xi)^{p_3}\|_{L^{r_1}} \|V_\ga g (\cdot, \nu+\xi)^{p_3}\|_{L^{s_1}}\Big)^{\frac{q_3}{p_3}} \,d\xi\Big)^{\frac{q_4}{q_3}}\,d\nu\Big)^{\frac{q_3}{q_4}\frac{q_4}{q_3}\frac{1}{q_4}}\notag \\
&\leq&
  \Big\| \ \|V_\ga f^{p_3}\|_{L^{r_1}}^{\frac{q_3}{p_3}} \Big\|_{L^{r_2}}^{\frac{1}{q_3}}
  \ \
  \Big\| \ \|V_\ga g ^{p_3}\|_{L^{s_1}}^{\frac{q_3}{p_3}} \Big\|_{L^{s_2}}^{\frac{1}{q_3}} \label{equation:YoungSecond} \\
&=&
  \Big(\int\Big(\int |V_\ga f| ^{p_3r_1}\,dx\Big)^{\frac{r_2}{r_1}\frac{q_3}{p_3}} \, d\xi\Big)^{\frac{1}{r_2q_3}}
  \ \
 \Big(\int\Big(\int |V_\ga g ^{p_3s_1}\,dx\Big)^{\frac{s_2}{s_1}\frac{q_3}{p_3}} \, d\xi\Big)^{\frac{1}{s_2q_3}} \notag 
\\
&\equiv&
  \|f\|_{M^{p_3r_1,q_3r_2}}
  \ \
 \|g\|_{M^{p_3s_1,q_3s_2}} \,. \notag
\end{eqnarray}
To apply Young's inequality to obtain \eqref{equation:YoungFirst}, we assume $p_4\geq p_3$ and choose $r_1,s_1\geq 1$ with
\begin{eqnarray}
  \frac{1}{r_1}+\frac{1}{s_1}=1+\frac{p_3}{p_4}. \label{equation:YoungFirstArtihmetic}
\end{eqnarray} Similarly,  to obtain \eqref{equation:YoungSecond}, we use  $q_4\geq q_3$ and choose $r_2,s_2\geq 1$ with
\begin{eqnarray}
  \frac{1}{r_2}+\frac{1}{s_2}=1+\frac{q_3}{q_4}. \label{equation:YoungSecondArtihmetic}
\end{eqnarray}

To conclude our proof of the unweighted case, we set $p_1=p_3r_1$, $q_1=q_3r_2$, $p_2=p_3s_1$, and $q_2=q_3s_2$. As all factors must be greater than or equal to one, we require $p_1,p_2\geq p_3$ and $q_1,q_2\geq q_3$. Moreover, \eqref{equation:YoungFirstArtihmetic} and \eqref{equation:YoungSecondArtihmetic} need to be satisfied, this holds if and only if \eqref{equation:Arithmetic} holds. The case that for some $k$,  $p_k=\infty$ or $q_k=\infty$ follows from making the usual adjustments.

The weighted case follows by simply replacing $V_\Ga G$ with $\weight \,V_\Ga G$ in equations  \eqref{eqn:firstequation} till \eqref{equation:YoungFirst}, and then replacing $V_{\ga} f$ and $V_\ga g$ by $\weight_1\, V_{ \ga} f$ and $\weight_2 V_\ga g$. This is justified by \eqref{eqn:weightinequality}.
\end{proof}

{\it Proof of Theorem~\ref{theorem:boundedoperators}.}
Let $f,g\in \mathcal S(\R^d)$ and $H$ with $\sigma_H\in M^{(p_3,q_3),(q_4,p_4) }(\R^{2d})$.  Then
\begin{eqnarray}
\big|\langle H f, g \rangle\big|&=&\big|\int \int h(x,t)  f (x-t)\,dt \  \overline g (x) \, dx\big|=\big|\langle h_H, \overline{f}{\otimes} g {\circ} \left( \begin{smallmatrix} 1 & -1 \\ 1 & \ 0 \end{smallmatrix} \right) \rangle\big|
  \notag 
  \\
  &=&\big|\langle \sigma_H, \mathcal F_{t\rightarrow \xi} \overline{f}{\otimes} g {\circ} \left( \begin{smallmatrix} 1 & -1 \\ 1 & \ 0 \end{smallmatrix} \right) \rangle\big| \notag \\
  &\leq&\big\| \sigma_H\|_{M_w^{(p_3,q_3),(q_4,p_4)}} \ \big\| \mathcal F_{t\rightarrow \xi} \overline{f}{\otimes} g {\circ} \left( \begin{smallmatrix} 1 & -1 \\ 1 & \ 0 \end{smallmatrix} \right) \big\|_{M_{1/ w}^{(p_3',q_3'),(q_4',p_4')}} \label{eqn:Hoelder} \,,
\end{eqnarray} where we applied H\"older's inequality for weighted mixed $L^p$-spaces to obtain \eqref{eqn:Hoelder} \cite{Gro01}.
To obtain \eqref{eqn:norminequalitysigma}, it suffices to show $ \mathcal F_{t\rightarrow \xi}\overline{f}{\otimes} g {\circ} \left( \begin{smallmatrix} 1 & -1 \\ 1 & \ 0 \end{smallmatrix}\right)  \in M_{1/ w}^{(p_3',q_3'),(q_4',p_4')}$  for $f\in M_{w_1}^{p_1,q_1}$ and $g\in M_{1/ w_2}^{p_2',q_2'}$. Note that replacing $\ga$   by any other test function  in \eqref{eqn:modulationSpace} leads to a norm equivalent to $\|\cdot \|_{M^{p,q}_w}$, and we choose to show that for $\Psi=\mathcal F_{t\rightarrow \xi} \Ga$,  $\Ga(x,\xi)=\ga(x)\ga(x-t)$, we have that
\begin{eqnarray}
  \big\| \tfrac 1 w V_\Psi \mathcal F_{t\rightarrow \xi}\overline{f}{\otimes} g {\circ} \left( \begin{smallmatrix} 1 & -1 \\ 1 & \ 0 \end{smallmatrix}\right)\|_{L^{p_3',q_3',q_4',p_4'}} \label{eqn:righthandside}.
\end{eqnarray}
is bounded by the left hand side in \eqref{eqn:norminequality} for $f\in M_{w_1}^{p_1,q_1}$ and $g\in M_{1/ w_2}^{p_2',q_2'}$.  Note that  as
$$\big|\tfrac 1 w V_\Psi \mathcal F_{t\rightarrow \xi}\overline{f}{\otimes} g {\circ} \left( \begin{smallmatrix} 1 & -1 \\ 1 & \ 0 \end{smallmatrix}\right)\big|(x,\xi,\nu,t)=\big|\tfrac 1 w V_\Ga \overline{f}{\otimes} g {\circ} \left( \begin{smallmatrix} 1 & -1 \\ 1 & \ 0 \end{smallmatrix}\right)\big|(x,t,\nu,\xi),\quad x,\xi,t,\nu\in\R^d,$$
the boundedness follows from an adjustment of the order of exponentiation and integration in \eqref{eqn:norminequality}.
Minkowski's integral inequality, namely,
$$
   \Big\| \int |w(x,\cdot )\,F(x,\cdot)|^p \, dx\Big\|_{L^{\frac q p}} \leq  \int \Big(  \int |w(x,y)\,F(x,y)|^q \,dy \Big)^{\frac p q} dx
$$
implies that
 $\|w(x,y)f(x,y)\|_{L^{p,q}} \leq \|w(y,x)f(y,x)\|_{L^{qp}}$ if $p\leq q$. Hence, if $q_4'\leq p_4'$ and $q_3'\leq p_4'$, then we can move the $t$-integral in between the $x$-integral and the $\xi$-integral and obtain that \eqref{eqn:righthandside} is bounded by the left hand side of \eqref{eqn:norminequality}.

We now prepare to apply Lemma~\ref{lem:modulationspacemembership}.  Observe that if we assume
\begin{eqnarray}
  p_4', p_1,p_2'\geq p_3',\quad q_4', q_1,q_2'\geq q_3',\quad p_4'\geq q_3',q_4'\, , \label{eqn:exactinequals}
\end{eqnarray}
and
 \begin{eqnarray}
  \frac{1}{p_1} +\frac{1}{p_2'}=\frac{1}{p_3'} +\frac{1}{p_4'} \quad \text{and} \quad \frac{1}{q_1} +\frac{1}{q_2'}=\frac{1}{q_3'} +\frac{1}{q_4'},
  \notag
  \end{eqnarray} that is,
  $p_4, p_1',p_2\leq p_3$ and  $q_4, q_1',q_2\leq q_3$ and  $p_4\leq q_3,q_4$, and
  \begin{eqnarray}
  \frac{1}{p_1} +1- \frac{1}{p_2}=1- \frac{1}{p_3} + 1- \frac{1}{p_4} \quad \text{and} \quad \frac{1}{q_1} +1- \frac{1}{q_2}=1- \frac{1}{q_3} +1- \frac{1}{q_4},
  \notag
  \end{eqnarray}
  the latter being
  \begin{eqnarray}
  \frac{1}{p_1} - \frac{1}{p_2}=1- \frac{1}{p_3} - \frac{1}{p_4} \quad \text{and} \quad \frac{1}{q_1} - \frac{1}{q_2}=1- \frac{1}{q_3} + \frac{1}{q_4},
  \notag
  \end{eqnarray}
  and
  \begin{eqnarray}
  1+\frac{1}{p_2} = \frac{1}{p_1}+ \frac{1}{p_3} +\frac{1}{p_4} \quad \text{and} \quad 1+\frac{1}{q_2} = \frac{1}{q_1}+ \frac{1}{q_3} + \frac{1}{q_4}.
  \notag
  \end{eqnarray}
  Hence, we obtain \eqref{eqn:norminequalitysigma} if \eqref{eqn:exactinequals} is satisfied.
  Note that for $\widetilde p\leq p$ and $\widetilde q\leq q$ we have $M_w^{\widetilde p, \widetilde q}$ embeds continuously in $M_w^{ p, q}$ (see, for example, Theorem 12.2.2 in \cite{Gro01}).  Hence, \eqref{eqn:norminequalitysigma} remains true if we decrease $p_1,p_3,p_4$ and $q_1,q_3,q_4$, and/or increase $p_2$ and $q_2$. We conclude that  \eqref{eqn:norminequalitysigma} holds if  \eqref{equation:ArithmeticOperators}  and $p_4\leq q_3,q_4$ are satisfied.

\hfill $\square$

\begin{remark}
  \rm  Note that for Hilbert--Schmidt operators, we have
\begin{eqnarray}
     \|H\|_{HS}=\|\kappa_H\|_{L^2}=\|h_H\|_{L^2}= \|\sigma_H\|_{L^2}= \|\eta_H\|_{L^2}, \label{eqn:HSequality}
\end{eqnarray}
a fact which is helpful to obtain norm inequalities of the form \eqref{eqn:identifiable}. But when considering modulation space norms for operator symbols, the chain of equalities \eqref{eqn:HSequality}   fails to hold. For example, we  have
$$
    |\langle h_H, \pi(x,t,\nu,\xi)\ga \rangle|
        =   |\langle \sigma_H, \pi(x,\xi,\nu,t)\ga \rangle|
        =   |\langle \eta_H, \pi(t,\nu,\xi,x)  \ga \rangle|,
$$
but due to the implicitly given order of exponentiation and integration,
$$
     \|h_H\|_{M^{(p_1,p_2),(q_1,q_2)}}\not\asymp \|\sigma_H\|_{M^{(p_1,q_2),(q_1,p_2)}}\not\asymp \|\eta_H\|_{M^{(p_2,q_1),(q_2,p_1)}},\quad H:\mathcal S(\R^d)\longrightarrow \mathcal S'(\R^d).
$$
Consequently, when defining a modulation space type norm on sets of pseudodifferential operators, one can base it on either $h_H$, $\sigma_H$, or $\eta_H$, each choice leading to different operator spaces and norms. Lemma~\ref{lem:modulationspacemembership} gives a hint that it may be advantageous to define operator modulation spaces\\ $OM^{p_1,p_2,q_1,q_2}(L^2(\R^d),L^2(\R^d))$ on $L^2(\R^d)$ through finiteness of the norm
\begin{eqnarray}
 \|H\|_{OM_w^{p_1,p_2,q_1,q_2}}= \Big( \int \Big( \int \Big( \int \Big( \int
    \big| V^s_\ga \sigma_H (x,t,\xi,\nu)\, w(x,t,\xi,\nu)\big|^{p_1}\, dx \Big)^{\frac {p_2}{p_1}}\, dt \Big)^{\frac {q_1}{p_2}}\,d\xi \Big)^{\frac {q_2}{q_1}}\, d\nu\Big)^{\frac 1 {q_2}}\, ,\notag
\end{eqnarray}
where the {\it symplectic short-time Fourier transform} $V^s$ with respect to the window function $\ga\in \mathcal S(\R^{2d})$ is given by
$$
    V^s_\ga F (x,t,\xi,\nu)=\mathcal F^s \big(F\cdot \overline{T_{x,\xi} \ga}\big)(t,\nu), \quad F\in\mathcal S'(\R^{2d}).
$$
This choice of order of exponentiation and integration respects arranging the {\it time} variables ahead of the {\it frequency} variables, while listing first the {\it absolute time} variable $x$ and then the {\it time-shift} variable $t$, respectively, we first list the {\it absolute frequency} variable $\xi$ and then the {\it frequency-shift} variable $\nu$. More importantly,  with this choice, we have
$$ \| H f \|_{M^{p_2,q_2}}\leq C \|H\|_{OM^{p_3,p_4,q_3,q_4}} \ \|f\|_{M^{p_1,q_1}},\quad H \in OM^{p_1,p_2,q_1,q_2}(M^{p_1,q_1} (\R^d), {M^{p_2,q_2}}(\R^d) )$$
for  all $1\leq p_1,p_2,p_3, p_4,q_1,q_2,q_3,q_4\leq \infty$  satisfying \eqref{equation:ArithmeticOperators}.

For simplicity of terminology, we avoid the use of operator modulation spaces and  symplectic short-time Fourier transforms in the following. Lemma~\ref{lemma:compactbounded} implies that this does not lead to a loss of generality in case of the here considered operator Paley--Wiener spaces.
\end{remark}

\section{Sampling and reconstruction in operator Paley--Wiener spaces}\label{section:MainResults1}
We introduce  {\em operator Paley--Wiener spaces}.

 \begin{definition}
 For $1\leq p,q\leq\infty$ and a moderate weight $w$ on $\R^{2d}$,  operator Paley--Wiener spaces are given by
\begin{eqnarray}
  OPW^{p,q}_{w}(M)=\{H:\mathcal S(\R^d)\longrightarrow \mathcal S'(\R^d): \ \supp \mathcal F^s \sigma_H
\subseteq
  M\text{ and } \sigma_H \in L_{w}^{p,q}(\R^{2d})\}. \notag
\end{eqnarray}
$OPW^{p,q}_w(M) $ is a Banach space with norm $\|H\|_{OPW^{p,q}_{\weight}}=\|\sigma_H\|_{L_w^{p,q}}$. If $w\equiv 1$ and $p=q=2$ then we simply write $OPW(M)=\big\{  H\in HS(L^2(\R^d)):\quad \supp \mathcal F_s \sigma_H\subseteq M  \big\}$.
 \end{definition}

Note that,  as illustrated in Corollary~\ref{thmimproved-sampling} and Example~\ref{exa:convolution} below, it is appropriate to choose $OPW^{p,\infty}_w(M)$, respectively  $OPW^{\infty,q}_w(M)$, when considering multiplication respectively convolution operators. Moreover, observe that $OPW_w^{\infty,\infty}(M)$ consists of all operators in the weighted Sj\"ostrand class  with Kohn--Nirenberg symbol bandlimited to $M$ \cite{Gro06,Sjo94,Sjo95,Stro06}.

\begin{remark}\rm
In \cite{Hor07}, H\"ormander considers pseudodifferential operators with Kohn--Nirenberg symbol in
$$
    S^m_{\rho,\delta}=\big\{ \sigma \in C^\infty(\R^2d):\
            \big|\partial^\alpha_\xi \partial^\beta_x \sigma (x,\xi)\big|\leq C_{\alpha,\beta}(1+\|\xi\|_2)^{m-\rho (\alpha_1+\ldots+\alpha_d)+\delta (\beta_1+\ldots+\beta_d)},\quad \alpha,\beta \in\N_0^{d}\big\}
$$
where $m\in\R$, $0<\rho\leq 1$, and $0\leq \delta <1$. Clearly, if $\supp \mathcal F^s \sigma \subseteq M$ and $\sigma\in S^m_{\rho,\delta}$, then $L_\sigma\in OPW^{\infty,\infty}_{1\otimes w_{s}}(M)$ if $s\leq -m$ and $L_\sigma\in OPW^{\infty,q}_{1\otimes w_{s}}(M)$ if $(m+s)q< -1$.
\end{remark}

\begin{theorem} \label{thm:OPWextend} Let $1\leq p,q\leq \infty$ and $w$ moderate on $\R^2$. For $M$  compact exists $C>0$ with
\begin{eqnarray}
  \|Hf\|_{M_w^{p,q}}\leq C\, \|\sigma_H\|_{L_w^{p,q}}\, \|f\|_{M^{\infty,\infty}},
  \quad H\in OPW_w^{p,q}(M),\ f\in M^{\infty,\infty}(\R^d).\notag 
\end{eqnarray}
 Consequently, any $\displaystyle H\in OPW_w^{p,q}(M)$ extends to a bounded operator mapping  $M^{\infty,\infty}(\R^d)$ to $M_w^{p,q}(\R^d)$.
\end{theorem}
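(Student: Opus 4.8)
The plan is to read off the inequality as a special case of Theorem~\ref{theorem:boundedoperators} and then to trade the modulation-space symbol norm appearing there for the Lebesgue norm $\|\sigma_H\|_{L_w^{p,q}}$ using that $\sigma_H$ is bandlimited. The first step is a bookkeeping observation: since $\mathcal F^s F(t,\nu)=\mathcal F F(\nu,-t)$, the hypothesis $\supp \mathcal F^s\sigma_H\subseteq M$ is equivalent to $\supp\widehat{\sigma_H}$ being contained in the image of the compact set $M$ under the linear map $(t,\nu)\mapsto(\nu,-t)$, which is again compact. Hence $\sigma_H$ is an ordinarily bandlimited distribution on $\R^{2d}$, and the second part of Lemma~\ref{lemma:compactbounded} will be available on $\R^{2d}$.

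Concretely, in Theorem~\ref{theorem:boundedoperators} I would specialize the domain to $M^{\infty,\infty}(\R^d)$ by taking $(p_1,q_1)=(\infty,\infty)$ and $w_1\equiv 1$, the range to $M_w^{p,q}(\R^d)$ by taking $(p_2,q_2)=(p,q)$ and $w_2=w$, and choose the symbol indices $p_3=p$, $q_3=q$, $p_4=q_4=1$. With these choices the arithmetic conditions \eqref{equation:ArithmeticOperators} hold with equality, because $1+\tfrac1p=\tfrac1{p_3}+\tfrac1{p_4}$ and $1+\tfrac1q=\tfrac1{q_3}+\tfrac1{q_4}$, while $p_4=1\leq q=q_3$ and $p_4=1\leq q_4=1$ verify the remaining requirement $p_4\leq q_3,q_4$ (using $q\geq 1$). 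The one genuinely nonroutine point is producing a moderate symbol weight $\omega$ on $\R^{2d}{\times}\R^{2d}$ that simultaneously meets the domination hypothesis, which for $w_1\equiv 1$, $w_2=w$ reads $\omega(x,\xi,\nu,t)\geq c\,w(x,\nu+\xi)$, and factors as a tensor product with space-side weight exactly $w$, so that Lemma~\ref{lemma:compactbounded} returns the weight $w$. Here the moderateness of $w$ is what makes it work: $w(x,\xi+\nu)\leq C\,w(x,\xi)\,v(0,\nu)$ for a submultiplicative $v$, so the choice $\omega(x,\xi,\nu,t)=w(x,\xi)\,v(0,\nu)$ is moderate on $\R^{2d}{\times}\R^{2d}$, dominates $w(x,\nu+\xi)$ as required, and has the desired tensor structure $\omega=w\otimes\omega_2$ with $\omega_2(\nu,t)=v(0,\nu)$.

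With these choices Theorem~\ref{theorem:boundedoperators} yields $\|Hf\|_{M_w^{p,q}}\leq C\,\|\sigma_H\|_{M_\omega^{(p,q),(1,1)}}\,\|f\|_{M^{\infty,\infty}}$ for every $f\in M^{\infty,\infty}(\R^d)$. It then remains to invoke the second part of Lemma~\ref{lemma:compactbounded} on $\R^{2d}$: since $\widehat{\sigma_H}$ has compact support and the space-side weight of $\omega$ is $w$, we get $\|\sigma_H\|_{M_\omega^{(p,q),(1,1)}}\asymp\|\sigma_H\|_{L_w^{p,q}}$. In particular every $H\in OPW_w^{p,q}(M)$, which by definition has $\sigma_H\in L_w^{p,q}(\R^{2d})$, automatically satisfies $\sigma_H\in M_\omega^{(p,q),(1,1)}(\R^{2d})$, so the hypothesis of Theorem~\ref{theorem:boundedoperators} is met and the norm equivalence may be substituted into the bound above to obtain the asserted inequality. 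The closing ``consequently'' is then immediate: for each fixed $H\in OPW_w^{p,q}(M)$ the right-hand side is finite and linear in $\|f\|_{M^{\infty,\infty}}$, so $H$ is bounded from $M^{\infty,\infty}(\R^d)$ into $M_w^{p,q}(\R^d)$ with operator norm at most $C\,\|\sigma_H\|_{L_w^{p,q}}$, and this extension agrees with the original action on $\mathcal S(\R^d)$. The main obstacle I anticipate is precisely the construction and verification of $\omega$, since it must thread the domination inequality of Theorem~\ref{theorem:boundedoperators} through the tensor-factorization requirement of Lemma~\ref{lemma:compactbounded} without introducing a stray weight on the frequency side of the symbol.
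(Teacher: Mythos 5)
Your proposal is correct and follows essentially the same route as the paper: the same specialization of Theorem~\ref{theorem:boundedoperators} (with $p_1=q_1=\infty$, $p_2=p_3=p$, $q_2=q_3=q$, $p_4=q_4=1$) combined with Lemma~\ref{lemma:compactbounded} to trade the symbol's modulation-space norm for $\|\sigma_H\|_{L_w^{p,q}}$. The only difference is cosmetic: the paper sets $\omega(x,\xi,\nu,t)=w(x,\xi+\nu)$ and observes $\omega\asymp w\otimes 1$ on the compact frequency support of $V_{\varphi\otimes\varphi}\sigma_H$, whereas you take the globally tensor-factored majorant $w(x,\xi)\,v(0,\nu)$; both choices legitimately reconcile the domination hypothesis of Theorem~\ref{theorem:boundedoperators} with the tensor-product form required by Lemma~\ref{lemma:compactbounded}.
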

\begin{proof} Set $\omega(x,\xi,\nu,t)=w(x,\xi+\nu)$ and choose $\varphi\in\mathcal S(\R^d)$ with $\supp \widehat \varphi \subseteq [-1,1]^d$. Then we use Lemma~\ref{lemma:compactbounded} and $\supp V_{\varphi{\otimes}\varphi} \sigma_H\subseteq \R^{2d}\times M+[-1,1]^{2d}$, hence, $\omega \asymp w{\otimes}1$ on $\supp V_{\varphi{\otimes}\varphi} \sigma_H$,  to obtain
 \begin{eqnarray}
   \| \sigma_H\|_{L_w^{p,q}}&\asymp &\| \sigma_H\|_{M_{w\otimes 1}^{(p,q),(1,1)}}\asymp \| w{\otimes}1\, V_{\varphi{\otimes}\varphi} \sigma_H\|_{L^{(p,q),(1,1)}}
\asymp   \| \omega \, V_{\varphi{\otimes}\varphi} \sigma_H\|_{L^{(p,q),(1,1)}} \asymp  \| \sigma_H\|_{M_{\omega}^{(p,q),(1,1)}}\notag\,.
 \end{eqnarray} An application of Theorem~\ref{theorem:boundedoperators} with $p_1=q_1=\infty$, that is $p_1'=q_1'=1$, $p_2=p_3=p$, $q_2=q_3=q$, and $p_4,q_4=1$  concludes the proof.
\end{proof}

In the following, we set  $Q_{T}=[0, T_1) {\times}\ldots{\times} [0, T_d)$ for $T=(T_1,\ldots, T_d)\in(\R^+)^d$ and $R_{\Omega}=[-\frac {\Omega_1} 2, \frac {\Omega_1} 2){\times}\ldots{\times} [-\frac {\Omega_d} 2, \frac {\Omega_d} 2)$ for $\Omega=(\Omega_1,\ldots, \Omega_d)\in(\R^+)^d$.

\begin{theorem}\label{thm:main-full} Let $1\leq p,q\leq \infty$ and let $w=w_1{\otimes}w_2$  be moderate on $\R^{2d}$.  Let $T,\Omega\in (\R^+)^d$ satisfy  $T_m\Omega_m< 1$, $m=1,\ldots,d$. Let $\Lambda=T_1\Z\times\ldots\times T_d\Z$ and choose  $s \in M^{1,1}(\R^d)$ with $\supp \widehat s \subseteq R_{1/T}$ and $\widehat s \equiv 1$ on $R_\Omega$. Then
  \begin{eqnarray}
    \|H\sum_{\lambda \in\Lambda}\delta_{\lambda}\|_{M_w^{p,q}}\asymp \|H\|_{OPW_w^{p,q}},\quad H\in OPW_w^{p,q}( Q_T{\times}R_\Omega),\label{eqn:operatorstability-full}
  \end{eqnarray} and  any $H\in OPW_w^{p,q}( Q_T{\times}R_\Omega)$ can be reconstructed by means of
  \begin{eqnarray}
    \kappa_H(x+t,x)= \chi_{Q_{T}}(t) \sum_{\lambda \in\Lambda} \big(H\sum_{\lambda'\in\Lambda}\delta_{\lambda'}\big)(t+\lambda) \, s(x-\lambda).  \label{eqn:operatorreconstruction-full}
  \end{eqnarray}
  with convergence in $OPW_w^{p,q}(\R^{2d})$ for $1\leq p,q<\infty$ and weak-$\ast$ convergence else.
\end{theorem}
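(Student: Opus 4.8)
The plan is to reduce the statement to the classical sampling theorem applied in the absolute-time variable, one time-shift slice at a time, and then to lift the resulting identity to the weighted mixed-norm scale using the modulation-space tools already in place. First I would compute the output explicitly. Writing $g=H\sum_{\lambda'\in\Lambda}\delta_{\lambda'}$ and using the time-varying impulse response $Hf(x)=\int h_H(x,t)f(x-t)\,dt$, one gets $g(x)=\sum_{\lambda'\in\Lambda}h_H(x,x-\lambda')$. Since $\supp\mathcal F^s\sigma_H\subseteq Q_T\times R_\Omega$ forces $h_H(x,\cdot)$ to be supported in $Q_T$, and $Q_T$ is a fundamental domain for $\Lambda=T_1\Z\times\cdots\times T_d\Z$, exactly one summand survives for each $x$: writing $x=t+\lambda$ with $t\in Q_T$ and $\lambda\in\Lambda$ gives $g(t+\lambda)=h_H(t+\lambda,t)$. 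Thus the output encodes, for each fixed $t$, the point samples on the lattice $t+\Lambda$ of the slice $\phi_t:=h_H(\cdot,t)$.

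For the reconstruction formula \eqref{eqn:operatorreconstruction-full} I would note that each slice is bandlimited in the absolute-time variable, $\phi_t(y)=\int_{R_\Omega}\eta_H(t,\nu)e^{2\pi i\nu y}\,d\nu$, so $\supp\widehat{\phi_t}\subseteq R_\Omega$. With sampling lattice $t+\Lambda$ of spacing $T$ and $T_m\Omega_m<1$, this is a strictly oversampled classical configuration, so $\phi_t$ is recovered from $\{\phi_t(t+\lambda)\}_\lambda$ by interpolation against $s$, whose Fourier transform equals $1$ on $R_\Omega$ and is supported in the fundamental domain $R_{1/T}$ of the dual lattice. Substituting $\phi_t(t+\lambda)=g(t+\lambda)$ and $\kappa_H(x+t,x)=h_H(x+t,t)=\phi_t(x+t)$ yields precisely \eqref{eqn:operatorreconstruction-full}. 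Here the \emph{strict} inequality $T_m\Omega_m<1$ is what permits $s$ to be chosen in $M^{1,1}(\R^d)$ (smooth, well localized) rather than a bare sinc, and this membership is indispensable for the mixed-norm estimates that follow.

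For the norm equivalence \eqref{eqn:operatorstability-full} I would prove the two bounds separately. The upper bound $\|g\|_{M_w^{p,q}}\lesssim\|H\|_{OPW_w^{p,q}}$ is immediate from Theorem~\ref{thm:OPWextend}, once one observes that the Dirac comb lies in $M^{\infty,\infty}(\R^d)$ (its short-time Fourier transform against a Schwartz window is bounded, by summing the rapid decay of the window over $\Lambda$); hence $\|H\sum_{\lambda'}\delta_{\lambda'}\|_{M_w^{p,q}}\le C\,\|\sigma_H\|_{L_w^{p,q}}\,\|\sum_{\lambda'}\delta_{\lambda'}\|_{M^{\infty,\infty}}$. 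For the lower bound I would interpret \eqref{eqn:operatorreconstruction-full} as a bounded linear map $g\mapsto H$: reorganizing the values of $g$ over the tiling $Q_T+\Lambda=\R^d$ and synthesizing against the translates $\{s(\cdot-\lambda)\}$, I would estimate $\|\sigma_H\|_{L_w^{p,q}}$ by first using Lemma~\ref{lemma:compactbounded} to replace the modulation norms of the bandlimited slices and of the symbol by weighted mixed $L^p$ norms (the reduction already used in the proof of Theorem~\ref{thm:OPWextend}), and then invoking Theorem~\ref{thm:GaborEssentails} to transfer the underlying $L^2$ sampling identity, which at this rate is a Gabor Riesz-basis/frame statement, to the $l_w^{p,q}$--$M_w^{p,q}$ scale, with $s\in M^{1,1}$ guaranteeing the localization needed for the synthesis.

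Convergence of \eqref{eqn:operatorreconstruction-full} in $OPW_w^{p,q}$ for $1\le p,q<\infty$ would then follow from the unconditional convergence of the $\{s(\cdot-\lambda)\}$-synthesis in these spaces (a Riesz-basis expansion in the sense of Theorem~\ref{thm:GaborEssentails}), while for $p$ or $q=\infty$ only weak-$\ast$ convergence survives, as is standard at the non-reflexive endpoints. The step I expect to be the main obstacle is the lower bound for general $p,q$ and weight $w=w_1\otimes w_2$: a single continuous norm $\|g\|_{M_w^{p,q}(\R^d)}$, which does not manifestly separate the $(x,t)$ variables, must be shown to control the full symbol norm $\|\sigma_H\|_{L_w^{p,q}(\R^{2d})}$. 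This forces one to track the order of exponentiation and integration carefully across the \emph{time-shift}, \emph{frequency-shift}, and \emph{absolute-frequency} variables, to use the $v$-moderateness of $w$ so that the lattice shifts by $\lambda$ cost only a uniformly bounded factor, and to verify the $\infty$ endpoints separately, precisely the bookkeeping that the mixed-norm apparatus of Lemma~\ref{lemma:compactbounded} and Theorem~\ref{thm:GaborEssentails} is built to absorb.
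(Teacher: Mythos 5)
Your proposal is correct and follows essentially the same route as the paper: the reconstruction identity rests on the observation that the support of $\eta_H$ in the time-shift variable lies in a fundamental domain of $\Lambda$ (so the Dirac-comb output does not alias and $g(t+\lambda)=h_H(t+\lambda,t)$), after which Poisson summation and the support/normalization properties of $\widehat s$ recover each bandlimited slice --- the paper packages this same computation via the Zak transform rather than slice-wise Shannon sampling. For the norm equivalence both you and the paper obtain the upper bound from Theorem~\ref{thm:OPWextend} and defer the lower bound to the mixed-norm machinery of the proof of Theorem~\ref{thm:main-full2}, and your density/weak-$\ast$ treatment of convergence matches the paper's.
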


\begin{proof}  We shall show $\eqref{eqn:operatorreconstruction-full}$. The norm equivalence \eqref{eqn:operatorstability-full} can be shown by adopting the steps of the proof of  Theorem~\ref{thm:main-full2}.

For $\Lambda=T_1\Z\times\ldots\times T_d\Z$, we consider the Zak transform given by
\begin{eqnarray}
   Z_\Lambda f(t,\nu)= \sum_{\lambda \in\Lambda} f(t-\lambda)\, e^{2\pi  i \lambda
        \nu}, \quad (t,\nu)\in Q_T{\times} R_{\frac 1 T}. \notag
\end{eqnarray}

Note
$\displaystyle
    \big(H\sum_{\lambda'\in\Lambda}\delta_{\lambda'}\big) (x)
        =\langle \kappa_H(x,\cdot),\sum_{\lambda' \in\Lambda}\delta_{\lambda'}\rangle
        =\sum_{\lambda'\in\Lambda}\kappa_H(x,\lambda')=\sum_{\lambda'\in\Lambda}h_H(x,x-\lambda').
$
We consider first $h_H\in M^{1,1}(\R^{2d})$ and use the  Tonelli--Fubini Theorem and the Poisson Summation Formula \cite{Gro01},
page 250, to obtain for $(t,\nu)\in Q_{T,\frac 1 T}$
\begin{eqnarray*}
    Z_\Lambda {\circ} H \sum_{\lambda' \in\Lambda}\delta_{\lambda'} (t,\nu)
        &=&  \sum_{\lambda \in \Lambda}\sum_{\lambda'\in\Lambda} h_H(t-\lambda,t-\lambda-\lambda')  e^{2\pi i \lambda \nu}
            \\
        &=&  \sum_{\lambda \in \Lambda}\sum_{\lambda'\in\Lambda}
            \int \eta_H(t-\lambda-\lambda', \nu') e^{2\pi i ( t-\lambda)\nu'}\, d\nu'\, e^{2\pi i \lambda \nu}
            \\
          &=&  \sum_{\lambda \in \Lambda}\sum_{\lambda'\in\Lambda}
            \int \eta_H(t-\lambda-\lambda', \nu''+\nu) e^{2\pi i (( t-\lambda)(\nu+\nu'')+ \lambda \nu)} \,d\nu''
            \\
            &=& e^{2\pi i t \nu}  \sum_{\lambda''\in\Lambda}\sum_{\lambda \in \Lambda}
            \int \eta_H(t-\lambda'', \nu+\nu'') e^{2\pi i t \nu''}\, e^{-2\pi i \lambda \nu''} \,d\nu''
                 \\
            &=& e^{2\pi i t \nu} \sum_{\lambda''\in\Lambda} \sum_{\lambda \in \Lambda^\perp}
            \eta_H(t-\lambda'', \nu+\lambda) e^{2\pi i t \lambda }
            \\
           &=&  \sum_{\lambda''\in\Lambda} \sum_{\lambda \in \Lambda^\perp}
            \eta_H(t-\lambda'', \nu-\lambda) e^{2\pi i t(\nu-\lambda)},
\end{eqnarray*}
where $\Lambda^\perp=\{\lambda\in \R^d: e^{2\pi i \lambda \lambda'}=1 \text{ for all } \lambda'\in\Lambda\}= \frac 1 {T_1}\Z \times \ldots \times \frac 1 {T_d} \Z$ is the dual lattice of $\Lambda$.

This leads directly to \eqref{eqn:operatorreconstruction-full} since
\begin{eqnarray*}
   \int &\chi_{Q_{T}}(t) & \sum_{\lambda\in\Lambda} \big(H\sum_{\lambda'\in\Lambda}\delta_{\lambda'}\big)(t+\lambda)\, s(x-\lambda)\ e^{-2\pi i
    \nu x}\ dx\\
   &=&\chi_{Q_{T}}(t) \sum_{\lambda\in\Lambda}\,\big(H\sum_{\lambda'\in\Lambda}\delta_{\lambda'}\big)(t+\lambda)\,\int   s(x-\lambda)\ e^{-2\pi i
   \nu x }\,dx\\
   &=&\chi_{Q_{T}}(t) \sum_{\lambda\in\Lambda}\,\big(H\sum_{\lambda'\in\Lambda}\delta_{\lambda'}\big)(t+\lambda)\   e^{-2\pi i
   \lambda\nu}\, \widehat{s}(\nu)\\
   &=&\chi_{Q_{T}}(t) \widehat{s}(\nu)\ \left( Z_\Lambda
   \big(H\sum_{\lambda'\in\Lambda}\delta_{\lambda'}\big)\right)(t,\nu)\\&=&\eta_H(t,\nu)e^{2\pi i \nu t}=\int h_H(x,t)\ e^{-2\pi i
    \nu(x-t)}\,dx=\int h_H(x+t,t)\ e^{-2\pi i
    \nu x}\,dx.
\end{eqnarray*}

We can apply Lemma~\ref{lemma:compactbounded} to show that $\|H\|_{OPW_w^{p,q}}\asymp\|h_H\|_{ M_{\widetilde w}^{(p,1),(1,q)}}$, $\widetilde w (x,t,\nu,\xi)=w(x,\xi)$, and validity of \eqref{eqn:operatorreconstruction-full} for $h_H\in M_{\widetilde w}^{(p,1),(1,q)}(\R^{2d})$ follows then from the density of
$M_{\widetilde w}^{1,1}(\R^{2d})$ in $M_{\widetilde w}^{(p,1),(1,q)}(\R^{2d})$. In case of $p=\infty$ or $q=\infty$ it
follows from weak-$\ast$ density.
\end{proof}
\begin{remark}\rm
  If  $p,q=2$, then we can alternatively choose $s=\chi_{R_\Omega} \in M^{2,2}(\R^d)=L^2(\R^d)$. This allows us to replace the inequality $T_m\Omega_m< 1$ by  $T_m\Omega_m\leq 1$ in the hypothesis of Theorem~\ref{thm:main-full}.
\end{remark}
Note that Theorem~\ref{thm:main-full} and its proof  generalize trivially to the following setting.
\begin{theorem}\label{thm:main-fullb}
Let $1\leq p,q\leq \infty$ and $w=w_1{\otimes}w_2$ be moderate on $\R^{2d}$.  Let $A,B\subseteq \R^d$ be bounded, and let $\Lambda$ be a lattice such that $A$ is contained in a fundamental domain of $\Lambda$ and for some $\epsilon>0$, $B+[-\epsilon,\epsilon)^d$ is contained in a bounded fundamental domain of $\Lambda^\perp=\{\lambda\in \R^d: e^{2\pi i \lambda \lambda'}=1 \text{ for all } \lambda'\in\Lambda\}$.
Choose  $s \in M^{1,1}(\R^d)$ with $\supp \widehat s \subseteq B+[-\epsilon,\epsilon)^d$ and $\widehat s \equiv 1$ on $B$. Then
  \begin{eqnarray}
    \|H\sum_{\lambda \in\Lambda}\delta_{\lambda}\|_{M_w^{p,q}}\asymp \|H\|_{OPW_w^{p,q}},\quad H\in OPW_w^{p,q}(A{\times}B), \notag 
  \end{eqnarray} and  any $H\in OPW_w^{p,q}(A{\times}B)$ can be reconstructed by means of
  \begin{eqnarray}
    \kappa_H(x+t,t)= \chi_{A}(t) \sum_{\lambda \in\Lambda} \big(H\sum_{\lambda'\in\Lambda}\delta_{\lambda'}\big)(t+\lambda)\, s(x-\lambda).  \notag 
  \end{eqnarray}
  with convergence in $OPW_w^{p,q}(A{\times}B)$ for $1\leq p,q<\infty$ and weak-$\ast$ convergence else.
\end{theorem}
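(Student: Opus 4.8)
The plan is to transcribe the proof of Theorem~\ref{thm:main-full} almost verbatim, the only substantive change being that the cubes $Q_T$, $R_\Omega$ and the cubic fundamental domain $R_{1/T}$ are replaced by the general sets $A$, $B$ and by fundamental domains of $\Lambda$ and $\Lambda^\perp$. First I would fix a fundamental domain $D_\Lambda$ of $\Lambda$ with $A\subseteq D_\Lambda$ and the bounded fundamental domain $D_{\Lambda^\perp}$ of $\Lambda^\perp$ with $B+[-\epsilon,\epsilon)^d\subseteq D_{\Lambda^\perp}$ supplied by the hypotheses, and define the Zak transform $Z_\Lambda f(t,\nu)=\sum_{\lambda\in\Lambda}f(t-\lambda)\,e^{2\pi i\lambda\nu}$ on $D_\Lambda\times D_{\Lambda^\perp}$.

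I would establish the reconstruction identity first for $h_H\in M^{1,1}(\R^{2d})$. Since $\big(H\sum_{\lambda'\in\Lambda}\delta_{\lambda'}\big)(x)=\sum_{\lambda'\in\Lambda}h_H(x,x-\lambda')$, the evaluation of $Z_\Lambda\circ H\sum_{\lambda'}\delta_{\lambda'}$ in the proof of Theorem~\ref{thm:main-full} relies only on the lattice structure of $\Lambda$ together with the Tonelli--Fubini Theorem and the Poisson Summation Formula, and it carries over without change to give $Z_\Lambda\circ H\sum_{\lambda'\in\Lambda}\delta_{\lambda'}(t,\nu)=\sum_{\lambda''\in\Lambda}\sum_{\lambda\in\Lambda^\perp}\eta_H(t-\lambda'',\nu-\lambda)\,e^{2\pi i t(\nu-\lambda)}$ for $(t,\nu)\in D_\Lambda\times D_{\Lambda^\perp}$. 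This step is insensitive to the shapes of $A$ and $B$, so no new work is needed.

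The crux, and the step I expect to be the main obstacle, is to verify that the two support hypotheses isolate the single summand $\lambda''=0$, $\lambda=0$. Taking the Fourier transform in $x$ of the right-hand side of the reconstruction formula yields, exactly as before, $\chi_A(t)\,\widehat s(\nu)\,\big(Z_\Lambda H\sum_{\lambda'}\delta_{\lambda'}\big)(t,\nu)$. Because $\supp\eta_H\subseteq A\times B$ and $A$ lies in the fundamental domain $D_\Lambda$, the translates $A+\lambda''$, $\lambda''\in\Lambda$, are pairwise disjoint, so multiplication by $\chi_A(t)$ kills every term with $\lambda''\neq0$. Similarly, $\widehat s$ is supported in $D_{\Lambda^\perp}$ while the $\nu$-support of $\eta_H(t,\nu-\lambda)$ is $B+\lambda\subseteq D_{\Lambda^\perp}+\lambda$; since $D_{\Lambda^\perp}\cap(D_{\Lambda^\perp}+\lambda)=\emptyset$ for $\lambda\neq0$, the factor $\widehat s(\nu)$ kills every term with $\lambda\neq0$, and since $\widehat s\equiv1$ on $B\supseteq\supp_\nu\eta_H$ the surviving term is left unchanged. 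Hence the Fourier transform in $x$ of the right-hand side equals $\eta_H(t,\nu)\,e^{2\pi i t\nu}=\int h_H(x+t,t)\,e^{-2\pi i\nu x}\,dx$, and injectivity of the Fourier transform recovers $\kappa_H$. The entire dependence on the geometry of $A$, $B$, $\Lambda$, $\Lambda^\perp$ is concentrated in this disjointness bookkeeping, which is precisely what must be checked rather than merely copied.

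Finally, I would pass from $h_H\in M^{1,1}$ to a general $H\in OPW_w^{p,q}(A\times B)$ and establish the stability estimate as in Theorem~\ref{thm:main-full}: Lemma~\ref{lemma:compactbounded} identifies $\|H\|_{OPW_w^{p,q}}$ with $\|h_H\|_{M^{(p,1),(1,q)}_{\widetilde w}}$ for $\widetilde w(x,t,\nu,\xi)=w(x,\xi)$, after which the reconstruction identity extends from the dense subspace $M^{1,1}_{\widetilde w}(\R^{2d})$ by a density argument, with weak-$\ast$ density covering the cases $p=\infty$ or $q=\infty$. The norm equivalence $\|H\sum_{\lambda\in\Lambda}\delta_\lambda\|_{M_w^{p,q}}\asymp\|H\|_{OPW_w^{p,q}}$ then follows by the same steps used for Theorem~\ref{thm:main-full}.
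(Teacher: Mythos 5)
Your proposal is correct and follows exactly the route the paper intends: the paper gives no separate proof of this theorem, stating only that Theorem~\ref{thm:main-full} ``and its proof generalize trivially,'' and your disjointness bookkeeping (translates of $A$ by $\Lambda$ and of the fundamental domain of $\Lambda^\perp$ by $\Lambda^\perp$ being pairwise disjoint) is precisely the check that makes the generalization go through. The Zak-transform/Poisson-summation computation and the density argument are indeed unchanged, so nothing further is needed.
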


Considering $OPW^{p,\infty}([0,T){\otimes} [-\tfrac \Omega 2, \tfrac \Omega 2))$, we obtain the classical sampling theorem as corollary to Theorem~\ref{thm:main-full}.
\begin{corollary}\label{thmimproved-sampling}
For $m\in L^p(\R)$, $1\leq p\leq \infty$, with $\supp \widehat m \subseteq [-\tfrac \Omega 2, \tfrac \Omega 2)$ and  $T$ with $T\Omega< 1$ choose $s\in M^{1,1}(\R)$ with $\supp \widehat s \subseteq  [-\tfrac \Omega 2, \tfrac \Omega 2)$ and $\widehat s \equiv 1$ on  $[-\tfrac 1 {2T}, \tfrac 1 {2T})$. Then
\begin{eqnarray}
  \|m\|_{L^p}\asymp\| \left\{m(kT)\right\}
\|_{l^p}\label{equation:improved-samplingnorms}
\end{eqnarray}
and
  \begin{eqnarray}
        m(x)= \sum_{k\in\Z} m(kT)\,
        s(x-kT).\notag 
  \end{eqnarray}
\end{corollary}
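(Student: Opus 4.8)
The plan is to realize the bandlimited function $m$ as the Kohn--Nirenberg symbol of a multiplication operator and then specialize Theorem~\ref{thm:main-full} to $d=1$, $w\equiv 1$, $\Lambda=T\Z$, $\Lambda^\perp=\tfrac1T\Z$. Concretely, I would take $H=H_m$ with $H_m f=m\cdot f$. Comparing $H_m f(x)=m(x)\int \widehat f(\xi)\,e^{2\pi i x\xi}\,d\xi$ with $Hf(x)=\int\sigma_H(x,\xi)\widehat f(\xi)e^{2\pi i x\xi}\,d\xi$ shows $\sigma_{H_m}(x,\xi)=m(x)$, independent of $\xi$. Hence $\|H_m\|_{OPW^{p,\infty}}=\|\sigma_{H_m}\|_{L^{p,\infty}}=\|m\|_{L^p}$, which already produces the left-hand side of \eqref{equation:improved-samplingnorms}; this also forces $q=\infty$, since for $q<\infty$ the $\xi$-integral of a symbol constant in $\xi$ diverges. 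Computing the spreading function and using that $\sigma_{H_m}$ is constant in $\xi$ gives $\eta_{H_m}=\mathcal F^s\sigma_{H_m}$ with $\eta_{H_m}(t,\nu)=\widehat m(\nu)\,\delta(t)$, so $\supp\eta_{H_m}\subseteq\{0\}\times\supp\widehat m\subseteq\{0\}\times R_\Omega\subseteq Q_T\times R_\Omega$. Thus $H_m\in OPW^{p,\infty}(Q_T\times R_\Omega)$, and since $T\Omega<1$ the hypotheses of Theorem~\ref{thm:main-full} are satisfied, with $s$ exactly the window it produces (so that $\supp\widehat s\subseteq R_{1/T}$ and $\widehat s\equiv 1$ on $R_\Omega\subseteq R_{1/T}$).

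I would then read off the two conclusions of Theorem~\ref{thm:main-full}. The sampled output is $H_m\sum_k\delta_{kT}=\sum_k m(kT)\,\delta_{kT}$, a weighted Dirac comb on $T\Z$, and the norm equivalence \eqref{eqn:operatorstability-full} becomes $\|\sum_k m(kT)\delta_{kT}\|_{M^{p,\infty}}\asymp\|m\|_{L^p}$. For the reconstruction, note $\kappa_{H_m}(x,y)=m(x)\delta(x-y)$, so $\kappa_{H_m}(x+t,x)=m(x+t)\delta(t)=m(x)\delta(t)$; on the right of \eqref{eqn:operatorreconstruction-full} the factor $\chi_{Q_T}(t)$ together with the spacing of $T\Z$ lets only the term $t=0$ survive in each inner Dirac, collapsing the formula to $m(x)\delta(t)=\delta(t)\sum_k m(kT)\,s(x-kT)$ and hence to $m(x)=\sum_k m(kT)\,s(x-kT)$, as claimed.

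The one step needing genuine work is the equivalence $\|\mu\|_{M^{p,\infty}}\asymp\|\{c_k\}\|_{l^p}$ for $\mu=\sum_k c_k\delta_{kT}$, which together with the displayed equivalence yields \eqref{equation:improved-samplingnorms}. I would argue from the Gaussian STFT, $V_\ga\mu(x,\xi)=\sum_k c_k\,\overline{\ga(kT-x)}\,e^{-2\pi i \xi kT}$. The upper bound is routine: $|V_\ga\mu(x,\xi)|\le\sum_k|c_k|\,|\ga(kT-x)|$ is dominated, uniformly in $\xi$, by the convolution of the discrete measure with $|\ga|$, whose $L^p_x$ norm is $\lesssim\|\{c_k\}\|_{l^p}$ by the rapid decay of $\ga$. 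The lower bound is the crux and is exactly where the essential supremum over $\xi$ must be used: since $V_\ga\mu$ is $\tfrac1T$-periodic in $\xi$, one has $\|\mu\|_{M^{p,\infty}}^p\ge T\int_0^{1/T}\!\!\int|V_\ga\mu(x,\xi)|^p\,dx\,d\xi$, and I would isolate the diagonal contribution $|c_k|^p|\ga(kT-x)|^p$ by integrating in $\xi$ over a period (Parseval settles $p=2$ directly, and Gaussian localization near each node $x=kT$ handles general $p$) to bound the right side below by a constant times $\|\{c_k\}\|_{l^p}^p$. Controlling the off-diagonal cross terms for $p\neq 2$ in this discrete-measure estimate is the main obstacle; every other step is a direct specialization of Theorem~\ref{thm:main-full}.
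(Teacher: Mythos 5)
Your route is the same as the paper's: realize $m$ as the Kohn--Nirenberg symbol of the multiplication operator $f\mapsto m\cdot f$, check $\sigma_{H_m}(x,\xi)=m(x)$, $\eta_{H_m}=\delta_0\otimes\widehat m$, hence $H_m\in OPW^{p,\infty}\big([0,T){\times}[-\tfrac\Omega2,\tfrac\Omega2)\big)$, and then specialize Theorem~\ref{thm:main-full}; the collapse of \eqref{eqn:operatorreconstruction-full} to $m(x)=\sum_k m(kT)s(x-kT)$ is exactly the computation in the paper. (You also silently use the consistent orientation $\supp\widehat s\subseteq R_{1/T}$, $\widehat s\equiv1$ on $R_\Omega$ from Theorem~\ref{thm:main-full}, which is the right reading.)

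The one place you flag as an obstacle --- the equivalence $\|\sum_k c_k\delta_{kT}\|_{M^{p,\infty}}\asymp\|\{c_k\}\|_{l^p}$ --- is indeed the only step the paper also leaves as a ``verification,'' and your proposed attack on it does run into a genuine problem: averaging $|V_\ga\mu(x,\xi)|^p$ over a $\xi$-period isolates the diagonal only for $p=2$ (Parseval); for general $p$ the lower bound $\int_0^{1/T}\bigl|\sum_k a_k e^{-2\pi i\xi kT}\bigr|^p d\xi\gtrsim\sum_k|a_k|^p$ is false in one direction or the other, so the cross terms cannot be dismissed this way. The standard fix avoids the issue entirely: since any $\varphi\in\mathcal S(\R)\setminus\{0\}$ yields an equivalent $M^{p,q}$ norm, take $\varphi\in C_c^\infty$ with $\supp\varphi\subseteq(-\tfrac T2,\tfrac T2)$. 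Then for each $x$ at most one term of $V_\varphi\mu(x,\xi)=\sum_k c_k\,\overline{\varphi(kT-x)}\,e^{-2\pi i\xi kT}$ is nonzero, so $\int|V_\varphi\mu(x,\xi)|^p\,dx=\|\varphi\|_{L^p}^p\,\|\{c_k\}\|_{l^p}^p$ for every $\xi$, giving the equivalence exactly, for all $1\le p\le\infty$, with no off-diagonal terms to control. With that substitution your argument is complete and coincides with the paper's proof.
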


\begin{proof}
For $m\in L^p(\R)$ with $\supp \widehat m \subseteq [-\tfrac \Omega 2, \tfrac \Omega 2)$, we define the multiplication operator $M$ formally by $M:f\longrightarrow m\cdot f$, $f\in \mathcal S(\R)$. We have
\begin{eqnarray*}
     M f(x)     &=& m(x)f(x)=\int m(x)\delta_0(t)f(x-t)dt
                = \iint \delta_0(t)\fhat{m}(\nu)e^{2\pi i x \nu}f(x-t)\,dt \,d\nu\,.
\end{eqnarray*}
Hence,  $\delta_0\otimes \fhat{m}=\eta_M=\mathcal F^s\sigma_M$, and, picking any $T$ with $T\Omega<1$, we conclude $M \in OPW^{p,\infty}\big([0,T){\times}[-\tfrac \Omega 2, \tfrac \Omega 2)\big)$.

Theorem~\ref{thm:main-full} implies that $H$ and therefore $m$ is fully recoverable from $M \sum_{k\in\Z}\delta_{kT}=
\sum_{k\in\Z} m(kT)\delta_{kT}$, in fact, the reconstruction formula \eqref{eqn:operatorreconstruction-full} reduces then to the classical reconstruction formula for functions:
\begin{eqnarray*}
 m(x)\delta_0(t)&=&m(x+t)\delta_0(t) =\kappa_M(x+t,x)
   \stackrel{{\rm Thm~\ref{thm:main-full}}}{=} \chi_{[0,T)}(t)\sum_{n\in\Z}\, \big(M\sum_{k\in\Z}\delta_{kT}\big)(t+nT)\, s(x-nT) \\
        &=&\chi_{[0,T)}(t)\sum_{n\in\Z}\sum_{k\in\Z}m(kT)\delta_{kT}(t+nT)s(x-nT)\\
        &=&\chi_{[0,T)}(t)\sum_{n\in\Z}\sum_{k\in\Z}m(kT)\delta_{0}(t-(kT-nT))s(x-nT)\\
        &=&\left\{%
\begin{array}{ll}
    0, & \hbox{if $t\notin [0,T)$ or $t \notin \Z T$;} \\[.3cm]
    \sum_{n\in\Z}\sum_{k\in\Z}m(kT)\delta_{0}((n-k)T)s(x-nT), & \hbox{\ } \\[.1cm]
    \quad =\ \sum_{k\in\Z}m(kT)s(x-kT), & \hbox{if $t=0$.} \\
\end{array}%
\right.\\
&=&\delta_0(t)\sum_{k\in\Z}m(kT)s(x-kT)\,.
\end{eqnarray*}

The norm equivalence in \eqref{equation:improved-samplingnorms} is obtained by verifying that
\begin{eqnarray*}
 \|m\|_{L^p}&\asymp & \|M \|_{OPW^{p,\infty}}
\asymp  \| M \sum_{n\in\Z}\delta_{nT}\|_{M^{p,\infty}}
    = \|\sum_{n\in\Z} m(nT)\delta_{nT}\|_{M^{p,\infty}}
    \asymp  \|\{m(nT)\}_n\|_p, \\  && \hspace{6cm} m\in L^p(\R), \  \supp \widehat{m}\subseteq [-\frac \Omega 2,\frac \Omega 2]\,.
\end{eqnarray*}
%
\end{proof}

In addition to the application of Theorem~\ref{thm:main-full} to multiplication operators, we consider now \\ $OPW^{\infty,p}([0,T){\otimes} [-\tfrac \Omega 2, \tfrac \Omega 2))$ for convolution operators.

\begin{example}\label{exa:convolution}\rm
Time invariant operators are convolution operators,
that is,
$$\displaystyle Hf(x)=h\ast f(x)=\int h(x-s) f(s) \, ds
.
$$ Such operators represent the classical example of operator identification/sampling namely, as $H\delta_0
(x) = h(x)$, $H\delta_0$ determines $h$ and therefore $H$ completely.   In the framework of operator sampling, we consider $h\in L^p(\R)$ with $\supp h\subseteq [0,T]$. We have $\eta_H(t,\nu)=\mathcal F^s \sigma_H(t,\nu)=h(t)\delta_0(\nu)$ and $H\in OPW^{\infty,p}\big([0,T){\times}[-\frac 1 {4T},\frac 1 {4T})\big)$. Moreover, with appropriate $s$ we may obtain $H\delta_0=h$ from \eqref{eqn:operatorreconstruction-full}, as
\begin{eqnarray*}
 h(t)&=&h_H(x,t)=h_H(x+t,t)
    \stackrel{{\rm Thm~\ref{thm:main-full}}}{=}\chi_{[0,T)}(t) \sum_{n\in\Z}\, \big(H\sum_{k\in\Z}\delta_{kT}\big)(t+nT)\, s(x-nT) \\
    &=& \chi_{[0,T)}(t) \sum_{n\in\Z} \sum_{k\in\Z}h(t-(kT-nT))\, s(x-nT)
    = \sum_{n\in\Z} \sum_{k\in\Z}\chi_{[0,T)}(t) h(t-(kT-nT))\, s(x-nT) \\
    &=& \sum_{n\in\Z} H\delta_0(t)s(x-nT) =  H\delta_0(t)\sum_{n\in\Z}s(x-nT)
      =  H\delta_0(t)\sum_{\ell \in\Z}\widehat{s}(\tfrac \ell T)e^{2\pi i x \ell}
           =  H\delta_0(t)\widehat{s}(0)=H\delta_0(t)\,.
\end{eqnarray*}
The
distributional spreading support of a time invariant operator is also
indicated in Figure~\ref{fig:niklasPic}.
\end{example}

\section{Necessary and sufficient conditions for operator sampling and identification}\label{section:MainResults2}
The aim of this section is to show that the applicability of sampling methods for operators depends solemnly on the size of the spreading support set $M$, that is, on the Jordan content of $M$ (see Definition~\ref{def:JordanContent} below). Our main result in this section, namely Theorem~\ref{thm:main-full2}, though, only covers the case $d=1$, that is, operators  $H:\mathcal S(\R)\longrightarrow \mathcal S'(\R)$. Possible means for generalizing Theorem~\ref{thm:main-full2} to operators $H:\mathcal S(\R^d)\longrightarrow \mathcal S'(\R^d)$ are briefly discussed in Section~\ref{section:outlook}.

Before recalling the definition of Jordan domains and some of their properties, and before stating and proving Theorems~\ref{thm:main-full2} and \ref{thm:main-full3}, we will use a geometric approach to obtain a sufficient condition for the identifiability of $OPW_w^{p,p}(M)$ if $M= A ( Q_T{\times}R_\Omega)+(t_0,\nu_0) \subseteq {\R^{2d}}$, $T,\Omega\in (\R^+)^d$,  $T_m\Omega_m< 1$, $m=1,\ldots,d$, and $A$ is a so-called symplectic matrix.  Theorem~\ref{thm:symplectic} below generalizes Theorem 5.4 in \cite{KP06}.

\begin{definition}
  The symplectic group $Sp(d,\R)$ consists of those matrices  $A\in SL(2d,\R)=\{A\in R^{2d\times 2d}: \ \det A=1\}$ with $ A^\star \left(
\begin{smallmatrix}
0&-I_d\\
I_d&0
\end{smallmatrix}
\right)
A
=
\left(
\begin{smallmatrix}
0&-I_d\\
I_d&0
\end{smallmatrix}
\right), $ where $I_d$ is the $d{\times}d$ identity matrix.
\end{definition}
Note that $A\in Sp(d,\R)$ if and only if $[A(x,\xi)^T,A(x',\xi')T]=[(x,\xi),(x',\xi')]$ where $[\cdot,\cdot]$ is the symplectic form defined in Section~\ref{section:notation}.

\begin{theorem} \label{thm:symplectic} Let  $A\in Sp(d,\R)$, $(t_0,\nu_0)\in \R^{2d}$, $1\leq p \leq \infty$, and let $w$ be a moderate weight  on $\R^{2d}$ with $w(A(x,\xi)^T)\leq w(x,\xi)$. Then

\begin{enumerate}
  \item $OPW_w^{p,p}(M)$ mapping $M^{\infty,\infty}(\R^d)$ to $M_w^{p,p}(\R^d)$  is identifiable if and only if $OPW_w^{p,p}\big(AM+(t_0,\nu_0)\big)$ mapping $M^{\infty,\infty}(\R^d)$ to $M_w^{p,p}(\R^d)$ is identifiable, and, consequently,
      \item for $T,\Omega\in (\R^+)^d$ with  $T_m\Omega_m< 1$, $m=1,\ldots,d$, we have  $ OPW_w^{p,p}\big( A ( Q_T{\times}R_\Omega)+(t_0,\nu_0)\big)$  mapping $M^{\infty,\infty}(\R^d)$ to $M_w^{p,p}(\R^d)$ is identifiable.
\end{enumerate}

\end{theorem}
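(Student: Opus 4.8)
The plan is to obtain part (2) as a corollary of part (1). The box $Q_T\times R_\Omega$ with $T_m\Omega_m<1$ is exactly the support set treated by Theorem~\ref{thm:main-full}, whose norm equivalence \eqref{eqn:operatorstability-full} states precisely that $OPW_w^{p,p}(Q_T\times R_\Omega)$ is identified by the Shah distribution $g=\sum_{\lambda\in\Lambda}\delta_\lambda\in M^{\infty,\infty}(\R^d)$; applying part (1) with $M=Q_T\times R_\Omega$ then transports this identifiability to $A(Q_T\times R_\Omega)+(t_0,\nu_0)$. Thus the content sits entirely in part (1), where the strategy is to exploit two covariance principles, one for the shift $(t_0,\nu_0)$ and one for the symplectic matrix $A$.

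For the shift I would first reduce to the case $(t_0,\nu_0)=0$. Composing an operator with a fixed time–frequency shift $\pi(t_0,\nu_0)$ translates its spreading function: from the spreading representation \eqref{equation:spreadingrepresentationBochner} and the composition law $\pi(t,\nu)\pi(t_0,\nu_0)=e^{-2\pi i \nu_0 t}\pi(t+t_0,\nu+\nu_0)$ one reads off $\eta_{H\circ\pi(t_0,\nu_0)}(\lambda)=e^{i\phi(\lambda)}\eta_H(\lambda-(t_0,\nu_0))$ with $|e^{i\phi}|=1$. Hence $H\mapsto H\circ\pi(t_0,\nu_0)$ is a bijection $OPW_w^{p,p}(N)\to OPW_w^{p,p}(N+(t_0,\nu_0))$ that is a norm equivalence, the induced symbol translation being absorbed by moderateness of $w$ with constants depending only on the fixed $(t_0,\nu_0)$. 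Since $\pi(t_0,\nu_0)$ is bounded with bounded inverse on $M^{\infty,\infty}(\R^d)$, an identifier $g$ for one space yields the identifier $\pi(t_0,\nu_0)^{-1}g$ for the other, so identifiability is preserved.

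The heart of the matter is that identifiability of $OPW_w^{p,p}(M)$ is equivalent to that of $OPW_w^{p,p}(AM)$. Here I would invoke the metaplectic operator $\mu(A)$ associated to $A\in Sp(d,\R)$, unitary on $L^2(\R^d)$ and satisfying the intertwining relation $\mu(A)\pi(\lambda)\mu(A)^{-1}=c(\lambda)\pi(A\lambda)$ with $|c(\lambda)|=1$. Feeding this into \eqref{equation:spreadingrepresentationBochner} and substituting $\lambda\mapsto A^{-1}\lambda$ (permissible since $\det A=1$) gives $\eta_{\mu(A)H\mu(A)^{-1}}(\lambda)=c(A^{-1}\lambda)\eta_H(A^{-1}\lambda)$, so conjugation by $\mu(A)$ carries the support condition for $OPW_w^{p,p}(M)$ to that for $OPW_w^{p,p}(AM)$. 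The transfer of identifiers then runs as in the shift case: with $\tilde g=\mu(A)g$ and $\tilde H=\mu(A)H\mu(A)^{-1}$ one has $\tilde H\tilde g=\mu(A)(Hg)$, so boundedness of $\mu(A)^{\pm1}$ on $M^{\infty,\infty}(\R^d)$ transports the identifier, while on the range side $\|\mu(A)(Hg)\|_{M_w^{p,p}}\asymp\|Hg\|_{M_{w\circ A}^{p,p}}$.

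Two facts make these norm comparisons go through, and pinning them down is the main obstacle. First, because $p=q$ the relevant mixed norm $L^{p,p}(\R^{2d})$ is simply $L^p(\R^{2d})$, which is invariant under the volume–preserving substitution by $A$; this is the precise point at which the equal–index hypothesis is unavoidable, and it lets me compute, via the covariance $|V_{\mu(A)\varphi}(\mu(A)f)(z)|=|V_\varphi f(A^{-1}z)|$ of the short–time Fourier transform together with the symplectic covariance $\mathcal F^s(F\circ A)=(\mathcal F^s F)\circ A$, both $\|\mu(A)f\|_{M_w^{p,p}}\asymp\|f\|_{M_{w\circ A}^{p,p}}$ and $\|\mu(A)H\mu(A)^{-1}\|_{OPW_w^{p,p}}\asymp\|H\|_{OPW_{w\circ A}^{p,p}}$. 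Second, the weight hypothesis $w(A(x,\xi)^T)\leq w(x,\xi)$, i.e.\ $w\circ A\leq w$, guarantees the continuous embeddings relating $M_w$ to $M_{w\circ A}$ and $OPW_w$ to $OPW_{w\circ A}$, so that the metaplectic operators act boundedly in the directions required and all equivalence constants stay finite. The delicate bookkeeping is the unimodular factor $c(A^{-1}\lambda)$: I must verify it is a genuine lower–order metaplectic term (a chirp/shift) preserving the equal–index modulation norm, so that it does not disturb the two equivalences above, and I must keep the $w$ versus $w\circ A$ comparisons consistent simultaneously on the operator side and the range side. That consistency, rather than any single estimate, is where the real work lies.
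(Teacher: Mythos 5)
Your proposal follows essentially the same route as the paper's (itself only outlined) proof: conjugation by the metaplectic operator $O_A$ intertwining $\pi(\lambda)$ with $\pi(A\lambda)$ to transport identifiability between $OPW_w^{p,p}(M)$ and $OPW_w^{p,p}(AM)$, composition with a time--frequency shift to handle the translation $(t_0,\nu_0)$, and Theorem~\ref{thm:main-full} to supply the identifier for $Q_T{\times}R_\Omega$ in part~2. You correctly isolate the same key inputs the paper cites (boundedness of metaplectic operators on $M_w^{p,p}$ and the necessity of $p=q$), and your attention to the unimodular phase in the covariance relation is, if anything, more careful than the paper's formal computation.
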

The proof of Theorem~\ref{thm:symplectic} is based on the representation theory of the Weyl-Heisenberg group. Here, we only outline the proof, the interested reader can import details from Section 5 in \cite{KP06} or \cite{Fol89,Gro01}.
\begin{proof}
We will obtain the identifiability
$OPW_w^{p,p}(AM)$ with $A\in Sp(2d,\R)$ from the identifiability of $OPW_w^{p,p}(M)$ by
using the canonical correspondence between elements in $OPW_w^{p,p}(AM)$
and elements in $OPW_w^{p,p}(M)$ which is given by a coordinate transformation in
the spreading domain $\R^{2d}\supseteq M, AM$. In fact, Theorem 5.3 in \cite{KP06}
recalls that for  $A\in Sp(d,\R)$, there exists a unitary operators
$O_A$ on $L^2(\R^d)$  with $\pi(A (t,\nu))=O_A \pi(t,\nu){O_A}^{\ast}$, $t,\nu\in\R^d$. Such operators $O_A$, $A\in Sp(d,\R)$ are called metaplectic operators, and they are  intertwining operators for representations of the reduced Weyl--Heisenberg group that are unitarily equivalent to the Schr\"odinger representation \cite{Fol89,Gro01}. Metaplectic operators are finite compositions of the Fourier transform, multiplication operators with multiplier $e^{-\pi i x^T C x}$ with $C$ selfadjoint, and normalized dilations $f\mapsto |\det D|^{\frac 1 2}\, f(Dx)$, $D$ invertible. They extend, respectively restrict, to isomorphisms on $M_w^{p,p}(\R^d)$, $1\leq p\leq \infty$, if $w(A(x,\xi)^T)\leq w(x,\xi)$ (see Theorem 7.4 in \cite{FG92a}).

The following formal
calculations of operator valued  integrals can be justified weakly for all
$H\in OPW_w^{p,p}(AM)$. A similar computation can be made for $H\in OPW_w^{p,p}\big(M+(t_0,\nu_0)\big)$, combining both leads Theorem~\ref{thm:symplectic}. We compute
\begin{eqnarray}
\qquad  H &=&  \iint \eta_H(t,\nu)\, M_\nu T_t \ dt\,d\nu
    =  \iint  \eta_H(t,\nu)\  \pi(t,\nu) \,dt\,d\nu \nonumber\\
    &=&  \iint \eta_H(A (t,\nu)^T)\  \pi(A (t,\nu)^T) \ dt\,d\nu=  \iint \eta_H(A (t,\nu)^T)\  O_A\pi(t,\nu){O_A}^{\ast} \ dt\,d\nu \nonumber\\
    &=&  O_A\iint \eta_{H_A}(t,\nu)\  \pi(t,\nu) \ dt\,d\nu\ {O_A}^{\ast}
    =  O_A\ H_A \ {O_A}^{\ast},
   \notag 
\end{eqnarray}
where $\eta_{H_{A}}=\eta_H{\circ}A$ and $H_{A}\in OPW_w^{p,p}(M)$. The
identifiability of $OPW_w^{p,p}(M)$ with identifier $f_M\in M^{\infty,\infty}(\R^d)$ leads therefore to the identifiability
of $OPW_w^{p,p}(AM)$ with identifier
$f_{AM}=O_A f_M\in M^{\infty,\infty}(\R^d)$.  In fact, we have
\begin{eqnarray}
   \|H  f_{AM}\|_{M_w^{p,p}}
        &=& \|H \, O_A f_M\|_{M_w^{p,p}}\asymp\|{O_A}^{\ast} H O_A f_M \|_{M_w^{p,p}} \notag \\
        &=&\|H_{A} f_M\|_{M_w^{p,p}}
        \asymp \|\sigma_{H_{A}} \|_{L_w^{p,p}}
        \asymp \|\sigma_{H} \|_{L_w^{p,p}}
        =\|H\|_{OPW_w^{p,p}},\quad H\in OPW_w^{p,p}(AM)\notag.
\end{eqnarray}
\end{proof}
\begin{remark}
\rm Theorem~\ref{thm:symplectic} is not an operator sampling result per se as not necessarily all $O_A$ map discretely supported distributions to discretely supported distributions. But  Theorem~\ref{thm:symplectic} can be used to show that  $OPW^{p,p}_w(M)$ permits operator sampling by showing that
\begin{enumerate}
\item $M\subseteq A\widetilde M + (t_0,\nu_0)$,

   \item $A\in Sp(d,\R)$,
   \item $w(A(x,\xi))\leq w(x,\xi)$, 
   \item $OPW^{p,p}_w(\widetilde M)$ permits operator sampling with sampling set $\{x_j\}$ and weights $\{c_j\}$, and
   \item $O_A^\ast \sum c_j\delta_{x_j}$ is discretely supported.
 \end{enumerate}
 Note also that the restriction to $p=q$ in Theorem~\ref{thm:symplectic} is necessary, as, for example, the Fourier transform is not an isomorphism on $M^{p,q}$ whenever $p\neq q$.
\end{remark}

Theorem~\ref{thm:symplectic} relies on arguments based on  symplectic geometry on phase space. As discussed above, Theorems~\ref{thm:main-full2} and \ref{thm:main-full3} give a characterization for the identifiability of operators $H:\mathcal S(\R)\longrightarrow \mathcal S'(\R)$ which does not rely on any geometric properties.

\begin{definition}\label{def:JordanContent}
For $K,L\in\N$ set $R_{K,L}=[0,\tfrac 1 K )\times[0,\tfrac{K}{L})$
and
\begin{eqnarray*}
  \mathcal{U}_{K,L}
        =   \Big\{
                \bigcup_{j=1}^J \Big(R_{K,L}+(\tfrac {k_j} K  , \tfrac {p_j K} {L}   )
                \Big):
                \ k_j,p_j\in\Z, J\in\N
            \Big\}. \label{equation:U-KL}
\end{eqnarray*}
The inner content, respectively outer content, of a bounded set
$M\subseteq \R^2$  is
\begin{eqnarray}
 \vol^-(M)= \sup\{ \mu (U):&&\hspace{-.6cm} U{\subseteq} M\text{ and } U\in  \mathcal
U_{K,L} \text{ for some } K{,}L\in \N\}\, ,
\label{equation:innercontent}
\end{eqnarray} respectively
\begin{eqnarray}
\vol^+(M)=\inf\{ \mu(U):&&\hspace{-.6cm} U{\supseteq}
M\text{ and } U\in  \mathcal U_{K,L} \text{ for some }
K{,}L\in \N\}. \label{equation:outercontent}
\end{eqnarray}

Clearly,  we have $\vol^-(M)\leq \vol^+(M)$. If $\vol^-(M)=
 \vol^+(M)$, then we say that $M$ is a  Jordan domain
with Jordan content $\vol(M)=\vol^-(M)=
 \vol^+(M)$.
\end{definition}

We collect some well known and useful facts on Jordan domains to illustrate their generality \cite{Fol99}.
\begin{proposition}\label{prop:jordancontent} Let $M\subseteq \R^2$.
\begin{enumerate}
  \item If $M$ is a Jordan domain, then $M$ is Lebesgue
  measurable with $\mu(M)=\vol(M)$.
  \item If $M$ is Lebesgue measurable and bounded and its boundary $\partial
  M$ is a Lebesgue zero set, that is, $\mu(\partial M)=0$, then $M$ is a  Jordan domain.
  \item If $M$ is open, then $\vol^-(M)=\mu(M)$
  and if $M$ is compact, then $\vol^+(M)=\mu(M)$.
  \item If $\mathcal P \subseteq\N$ is unbounded, then replacing the quantifier
  ``\,{\em for some} $L\in\N$"
  with ``{\em for some} $L\in \mathcal P$" in
  (\ref{equation:innercontent}) and in
  (\ref{equation:outercontent}) leads to  equivalent definitions of
  inner and outer Jordan content.
\end{enumerate}
\end{proposition}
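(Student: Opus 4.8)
The plan is to treat the four items essentially independently, with item~3 serving as the engine for items~2 and~4. Two preliminary observations drive everything. First, both contents are monotone under inclusion: if $A\subseteq B$ then $\vol^-(A)\leq\vol^-(B)$ and $\vol^+(A)\leq\vol^+(B)$, since enlarging the set enlarges the family of admissible inner sets and shrinks the family of admissible outer sets. Second, every cell of $\mathcal U_{K,L}$ is a translate of a rectangle of dimensions $\tfrac1K\times\tfrac KL$ (hence of area $\tfrac1L$), and these grids can be made arbitrarily fine in \emph{both} directions: given $\delta>0$, choose $K>1/\delta$ and then any $L$ with $K/L<\delta$. Moreover, for any bounded $S$ and any grid, the union $\underline S$ of cells contained in $S$ and the union $\overline S_{\mathrm{gr}}$ of cells meeting $S$ are \emph{finite} unions of cells, so they are legitimate members of $\mathcal U_{K,L}$, and both $S\setminus\underline S$ and $\overline S_{\mathrm{gr}}\setminus S$ consist of points lying within one cell-diameter of $\partial S$ (a segment joining a point of $S$ to a point of $S^c$ inside one convex cell must cross $\partial S$).

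I begin with item~3. For open bounded $M$ the inequality $\vol^-(M)\leq\mu(M)$ is trivial. For the reverse, fix a fine grid and let $\underline M$ be the union of cells contained in $M$; then $M\setminus\underline M$ lies in $E_r=\{x\in M:\dist(x,\partial M)<r\}$ with $r$ the cell-diameter. As the grid refines, $r\downarrow 0$ and $E_r$ decreases to $\{x\in M:\dist(x,\partial M)=0\}=M\cap\partial M=\emptyset$, so by continuity of measure $\mu(\underline M)\to\mu(M)$, giving $\vol^-(M)=\mu(M)$. For compact $M$ the argument is dual: $\vol^+(M)\geq\mu(M)$ is trivial, and if $\overline M_{\mathrm{gr}}$ is the union of cells meeting $M$ then $\overline M_{\mathrm{gr}}\subseteq G_r=\{x:\dist(x,M)<r\}$, and $G_r$ decreases to $\overline M=M$ as $r\downarrow 0$, so $\mu(\overline M_{\mathrm{gr}})\to\mu(M)$ and $\vol^+(M)=\mu(M)$.

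Item~1 is a sandwich argument. Choosing admissible $U_n\subseteq M\subseteq V_n$ with $\mu(U_n)>\vol(M)-\tfrac1n$ and $\mu(V_n)<\vol(M)+\tfrac1n$, the sets $U=\bigcup_n U_n$ and $V=\bigcap_n V_n$ are Lebesgue measurable, satisfy $U\subseteq M\subseteq V$, and obey $\mu(U)=\mu(V)=\vol(M)$, so $\mu(V\setminus U)=0$; completeness of Lebesgue measure then forces $M$ to be measurable with $\mu(M)=\vol(M)$. Item~2 follows by applying item~3 to the interior and closure of $M$: since $\mu(\partial M)=0$ we have $\mu(M^\circ)=\mu(M)=\mu(\overline M)$, and monotonicity together with item~3 yields $\mu(M)=\vol^-(M^\circ)\leq\vol^-(M)\leq\vol^+(M)\leq\vol^+(\overline M)=\mu(M)$, so that $M$ is a Jordan domain of content $\mu(M)$.

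The subtlest point is item~4, and here lies the main obstacle: the grids indexed by $L\in\mathcal P$ need \emph{not} refine an arbitrary $\mathcal U_{K,L}$ (for instance when $\mathcal P$ is the set of primes), so I cannot simply subdivide a given admissible set into $\mathcal P$-cells. The resolution is to route through item~3, whose proof used only the existence of arbitrarily fine grids, and such grids exist with $L\in\mathcal P$ since $\mathcal P$ is unbounded (choose $K>1/\delta$, then $L\in\mathcal P$ with $K/L<\delta$). Thus item~3 holds verbatim for the $\mathcal P$-restricted contents $\vol^\pm_{\mathcal P}$. The inequalities $\vol^-_{\mathcal P}(M)\leq\vol^-(M)$ and $\vol^+_{\mathcal P}(M)\geq\vol^+(M)$ are trivial because fewer grids are admissible. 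For the reverse on the inner side, given any admissible $U\subseteq M$, its interior $U^\circ$ is open with $\mu(U^\circ)=\mu(U)$ (the boundary of a finite union of rectangles is a null set), so the $\mathcal P$-version of item~3 gives $\vol^-_{\mathcal P}(M)\geq\vol^-_{\mathcal P}(U^\circ)=\mu(U)$; taking the supremum over $U$ yields $\vol^-_{\mathcal P}(M)\geq\vol^-(M)$. Dually, for admissible $V\supseteq M$ the closure $\overline V$ is compact with $\mu(\overline V)=\mu(V)$, whence $\vol^+_{\mathcal P}(M)\leq\vol^+_{\mathcal P}(\overline V)=\mu(V)$ and the infimum gives $\vol^+_{\mathcal P}(M)\leq\vol^+(M)$, establishing the claimed equivalence.
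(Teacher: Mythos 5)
Your proof is correct. Note that the paper itself supplies no argument for this proposition: it is stated as a collection of ``well known and useful facts'' with a citation to [Fol99], so there is no internal proof to compare against; what you have written is a complete, self-contained substitute. Your route is the standard one for Riemann--Jordan content (monotonicity of the two contents, the sandwich/completeness argument for item~1, and the $\{x:\dist(x,\partial M)<r\}\downarrow\emptyset$ continuity-of-measure argument for item~3, which then feeds item~2 via $M^\circ\subseteq M\subseteq\overline M$ and $\mu(\partial M)=0$). The one place where the paper's specific definition genuinely demands care is item~4: the grids $\mathcal U_{K,L}$ are anisotropic ($\tfrac1K\times\tfrac KL$ cells) and the families for different $L$ are not nested, so one cannot subdivide a given admissible set into $\mathcal P$-cells when, say, $\mathcal P$ is the primes (which is exactly the case the paper needs, via Theorem~\ref{prop:generallinear}). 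You identify this obstacle explicitly and resolve it correctly by observing that item~3 depends only on the existence of arbitrarily fine grids --- available with $L\in\mathcal P$ since $K$ is unconstrained and $\mathcal P$ is unbounded --- and then passing from an arbitrary admissible $U$ (resp.\ $V$) to $U^\circ$ (resp.\ $\overline V$), whose measure agrees with that of $U$ (resp.\ $V$) because the boundary of a finite union of rectangles is null. The only cosmetic caveats: the contents are defined in the paper only for bounded sets, so ``open'' in item~3 should be read as ``open and bounded'' (as you implicitly do), and the degenerate case where no cell fits inside $M$ (empty supremum) deserves a one-line convention; neither affects the substance.
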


The second main result of this paper has been stated in simple form as Theorem~\ref{thm:main-simple2}, part 1, in Section~\ref{section:introduction}. It also generalizes Theorem 1.1 in \cite{PW06b}.
\begin{theorem}\label{thm:main-full2}
For $1\leq p,q\leq \infty$ and $w=w_1\otimes w_2$ moderate, the class $OPW_w^{p,q}(M)$ mapping $M^{\infty,\infty}(\R)$ to $M_w^{p,q}(\R)$ permits operator sampling if $\vol^+(M)<1$. In fact, if $\vol^+(M)<1$, then there exists $L>0$ and a periodic sequence $\{c_n\}$ such that
\begin{eqnarray}
\|H\sum_{n\in\Z} c_n \delta_{\frac n L}\|_{M_w^{p,q}} \asymp \|H\|_{OPW_w^{p,q}},  \quad H\in OPW_w^{p,q}(M)\,.
 \label{eqn:mainresult2item1}
\end{eqnarray}
\end{theorem}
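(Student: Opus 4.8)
The plan is to reduce the general spreading support $M$ to a finite union of grid rectangles, convert the action of $H$ on the weighted Dirac comb into a finite linear system by a Zak transform computation, and then invert that system by a suitable choice of the periodic weights. First I would invoke $\vol^+(M)<1$ together with Definition~\ref{def:JordanContent} and part 4 of Proposition~\ref{prop:jordancontent} (which lets me restrict $L$ to a convenient unbounded set, e.g. so that $K\mid L$) to select $K,L\in\N$ and a set $U\in\mathcal U_{K,L}$ with $M\subseteq U$ and $\mu(U)<1$. Writing $U=\bigcup_{j=1}^{J}\big(R_{K,L}+(\tfrac{k_j}{K},\tfrac{p_jK}{L})\big)$ as a disjoint union of $J$ cells, each of area $1/L$, the condition $\mu(U)<1$ becomes $J<L$: strictly fewer active cells than the period of the comb. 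Since $OPW_w^{p,q}(M)$ sits inside $OPW_w^{p,q}(U)$ with identical norm, it suffices to prove the norm equivalence for $H\in OPW_w^{p,q}(U)$.

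Next I would compute the Zak transform of $H\sum_n c_n\delta_{n/L}$ for $\Lambda=\tfrac1L\Z$ exactly as in the proof of Theorem~\ref{thm:main-full}, using Tonelli--Fubini and the Poisson summation formula, with the periodic weights $\{c_n\}$ now producing additional modulation factors in the periodization sum. The outcome should be that, at almost every point of a fundamental domain, the Zak transform of the samples equals a fixed linear combination $\sum_{j=1}^{J}a_j\,\eta_H^{(j)}$ of the $J$ cell-restrictions $\eta_H^{(j)}$ of the spreading function, and that letting the discrete (dual-lattice) parameter range yields strictly more than $J$ such combinations, the surplus being precisely the content deficit $1-\mu(U)$. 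Recovering $\eta_H$ from the samples then amounts to left-inverting, uniformly in the continuous parameter, a single matrix $A=A(\{c_n\})$ with more rows than columns, whose entries are the weights $c_n$ modulated by roots of unity.

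The crucial step is to choose $\{c_n\}$ so that $A$ has full column rank with a uniformly bounded left inverse, and this is exactly where $J<L$ is used: because there are more measurements than the $J$ unknowns, it suffices that any $J$ of the candidate column vectors be linearly independent, which for a weight sequence of full spark holds since $J<L$ (the standard device being a prime period together with a Chebotarev/Vandermonde argument forcing all $J\times J$ minors of the underlying Fourier-type matrix to be nonzero). As there are only finitely many cell configurations and the matrices do not depend on the continuous parameter, the left inverse is then bounded uniformly. I expect this finite linear-algebra step to be the main obstacle, since it is the heart of the identifiability statement and the precise place where \emph{area less than one} enters; it is here that the argument generalizes Theorem~1.1 of \cite{PW06b} and Theorem~5.4 of \cite{KP06}.

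Finally I would transport the uniform invertibility of $A$ into the norm equivalence \eqref{eqn:mainresult2item1}. The upper bound is already supplied by Theorem~\ref{thm:OPWextend}. For the lower bound, since $\mathcal F^s\sigma_H$ is compactly supported, Lemma~\ref{lemma:compactbounded} identifies $\|H\|_{OPW_w^{p,q}}$ with a weighted mixed $L^{p,q}$-norm of $\sigma_H$ (equivalently of $\eta_H$), while the same lemma expresses $\|H\sum_n c_n\delta_{n/L}\|_{M_w^{p,q}}$ through a weighted mixed-norm of the samples. Because $w=w_1\otimes w_2$ is moderate and the finite system couples only finitely many cells, the pointwise bounds on $A$ and its left inverse yield, for each fixed $1\le p,q\le\infty$, the two-sided estimate between these mixed norms, which completes the proof.
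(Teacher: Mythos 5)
Your proposal follows essentially the same route as the paper: cover $M$ by at most $L$ cells of a $\mathcal{U}_{K,L}$ grid with $L$ prime, feed in an $L$-periodic weighted Dirac comb, reduce via Poisson summation to a finite linear system whose columns are discrete Gabor vectors $\pi(k,\ell)c$, invert it using the general-linear-position result of Theorem~\ref{prop:generallinear} (from \cite{LPW05}), and take the upper bound from Theorem~\ref{thm:OPWextend}. The only cosmetic differences are that the paper first normalizes $M$ via Proposition~\ref{prop:equivalentIdentifiability}, pads the cover to exactly $L$ cells, and replaces the literal Zak transform by a short-time Fourier transform with a $C_c^\infty$ window on an $\epsilon$-enlarged cover so that the mixed $(p,q)$-norms can be tracked --- note that Lemma~\ref{lemma:compactbounded} does not apply directly to $Hg$, which is neither compactly supported nor bandlimited.
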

Theorem~\ref{thm:main-full2} is complemented by Theorem~\ref{thm:main-full3} which generalizes Theorem 1.1 in \cite{PW06b} and Theorem 5.2, part 2, in \cite{PW06}.
\begin{theorem}\label{thm:main-full3}
Let $1\leq p,q\leq \infty$ and $w$ subexponential. The class $OPW_w^{p,q}(M)$ mapping $M^{\infty,\infty}(\R)$ to $M_w^{p,q}(\R)$ is not identifiable if
$\vol^-(M)>1$, that is, for all $f\in M^{\infty,\infty}$ we have
\begin{eqnarray}
  \|Hf\|_{M_w^{p,q}}\not \asymp  \|H\|_{OPW_w^{p,q}} \quad H\in OPW_w^{p,q}(M)\,.
  \notag
\end{eqnarray}
\end{theorem}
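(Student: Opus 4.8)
The conclusion is the failure of the \emph{lower} bound in \eqref{eqn:identifiable}, so the plan is to fix an arbitrary $f\in M^{\infty,\infty}(\R)$ and produce a sequence $H_m\in OPW_w^{p,q}(M)$ with $\|H_m\|_{OPW_w^{p,q}}\asymp 1$ but $\|H_m f\|_{M_w^{p,q}}\longrightarrow 0$; this exhibits $\Phi_f\colon H\mapsto Hf$ as not bounded below and hence proves $\|Hf\|_{M_w^{p,q}}\not\asymp\|H\|_{OPW_w^{p,q}}$. As a first step I would invoke $\vol^-(M)>1$ through Definition~\ref{def:JordanContent} and Proposition~\ref{prop:jordancontent}(4) to choose, for an arbitrarily large $L\in\N$, integers $K,J$ and a set
\[
U=\bigcup_{j=1}^{J}\Bigl(R_{K,L}+\bigl(\tfrac{k_j}{K},\tfrac{p_jK}{L}\bigr)\Bigr)\subseteq M,\qquad \mu(U)=\tfrac JL>1,
\]
so that $U$ is a union of $J>L$ cells of the lattice $\Lambda^{\circ}=\tfrac1K\Z\times\tfrac KL\Z$, each of covolume $1/L$. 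It then suffices to defeat the lower bound already on the subclass $\{H\in OPW_w^{p,q}(M):\ \supp\eta_H\subseteq U\}$.

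The heart of the matter is that this subclass carries $J$ independent spreading cells while the output $Hf$ can stably resolve only $L$ of them, so that $J>L$ forces overcompleteness. To see this quantitatively I would parametrize $H$ by a Fourier expansion of $\eta_H$ on the cells and track the response exactly as in the proof of Theorem~\ref{thm:main-full}; for the transparent special case of a Dirac comb identifier $f=\sum_{\lambda'\in\Lambda}\delta_{\lambda'}$ along a suitable lattice $\Lambda$ the periodization identity
\[
Z_\Lambda\Bigl(H\textstyle\sum_{\lambda'\in\Lambda}\delta_{\lambda'}\Bigr)(t,\nu)=\sum_{\lambda''\in\Lambda}\sum_{\lambda\in\Lambda^\perp}\eta_H(t-\lambda'',\nu-\lambda)\,e^{2\pi i t(\nu-\lambda)}
\]
folds the $J$ cell contributions into only $L$ inequivalent residues modulo $\Lambda\times\Lambda^\perp$, so that the linear map from cell coefficients to the Zak samples of $Hf$ is an $L\times J$ matrix with nontrivial kernel. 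For general $f$ the same modulation of $\eta_H$ inside a cell shifts $\pi(t,\nu)f$ along the dual lattice $(\Lambda^{\circ})^{\perp}$, whose covolume is $L$; hence $\Phi_f$ restricted to the subclass factors, after applying $V_\ga$, through the synthesis operator of a Gabor--type system $\{\pi(\mu)f\}_{\mu\in\Gamma}$ with $\Gamma$ a union of $J$ translates of $(\Lambda^{\circ})^{\perp}$, whose lower density equals $J/L>1$. A system of time--frequency shifts of lower density exceeding one is overcomplete and in particular possesses no lower Riesz bound, so its synthesis operator admits normalized coefficient families with arbitrarily small output.

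Transporting such a near--null family of coefficients back to spreading functions supported in $U\subseteq M$ yields the operators $H_m$. To read off the two norms I would use that, since $U$ is compact, Lemma~\ref{lemma:compactbounded} reduces $\|H_m\|_{OPW_w^{p,q}}=\|\sigma_{H_m}\|_{L_w^{p,q}}$ to an $\ell^{p,q}$--type norm of the cell coefficients, while Theorem~\ref{thm:GaborEssentails} identifies $\|H_m f\|_{M_w^{p,q}}$ with the $M_w^{p,q}$--norm of the corresponding Gabor sum; the subexponential hypothesis on $w$ is exactly what keeps the relevant weighted Gabor frame and Riesz--basis machinery available across the full range $1\le p,q\le\infty$. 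Normalizing so that $\|H_m\|_{OPW_w^{p,q}}\asymp1$ while the Gabor output tends to zero then gives $\|H_m f\|_{M_w^{p,q}}\to0$, completing the argument; this is the negative counterpart of Theorem~\ref{thm:main-full2} and extends Theorem~1.1 in \cite{PW06b} and Theorem~5.2(2) in \cite{PW06}.

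The main obstacle is the passage from the transparent finite--dimensional, Dirac--comb picture to \emph{arbitrary} distributional windows $f\in M^{\infty,\infty}(\R)$: one must show that no such $f$ can turn the family $\{\pi(\mu)f\}_{\mu\in\Gamma}$ of lower density $J/L>1$ into a Riesz sequence, i.e.\ a genuine lower--density obstruction to stable linear independence of time--frequency shifts, valid for a window that is only a tempered distribution and measured in the weighted mixed--norm spaces $M_w^{p,q}$. Establishing this density principle uniformly in $p,q$ and $w$ --- rather than merely exhibiting a single inconvenient $f$ --- and verifying that the norm transfers above are two--sided equivalences when $p$ or $q$ is infinite (where one must argue with weak--$\ast$ limits) is where the real work lies.
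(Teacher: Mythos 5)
Your counting heuristic ($J$ spreading cells feeding only $L$ resolvable output residues per unit cell) is exactly the right intuition, and your finite-dimensional Dirac-comb picture matches the motivation behind the paper's argument. But the step you lean on to close the argument --- that a family $\set{\pi(\mu)f}_{\mu\in\Gamma}$ of lower density $J/L>1$ ``is overcomplete and in particular possesses no lower Riesz bound'' --- is not an available theorem in the generality you need, and you concede as much in your final paragraph. For a window $f$ that is merely a tempered distribution in $M^{\infty,\infty}(\R)$, with coefficients and outputs measured in weighted mixed-norm spaces $M^{p,q}_w$, no Ramanathan--Steger-type density obstruction is on the shelf; establishing it \emph{is} the content of the theorem, so the proposal is circular at its core. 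A second, smaller inaccuracy: the map $\Phi_f$ restricted to the relevant subclass does not factor through a synthesis operator of time--frequency shifts of the single window $f$; the images $P_j f$ are time--frequency shifts of \emph{different} functions $P\,T_{\cdot}M_{\cdot}f$, so the reduction to a Gabor system in $f$ does not go through as stated.

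The paper avoids any density principle for distributional windows. It builds a concrete $l^{p,q}_{\widetilde w}$--Riesz basis of operators $\set{P_j}$ from a prototype $P$ whose spreading function is a tensor product of Gevrey-class bumps (Lemma~\ref{lem:elementarybound}), chosen so that $|Pf(x)|$ and $|\widehat{Pf}(\xi)|$ decay subexponentially \emph{uniformly} in $\norm{f}_{M^{\infty,\infty}}$. Pre-composing $\Phi_f$ with the synthesis map $E$ of this family and post-composing with a Gabor analysis map $C_{\ga}$ (window a Gaussian, not $f$) produces a bi-infinite matrix whose entries satisfy $|m_{j'j}|\leq \rho(\lllambda\norm{j'}_\infty-\norm{j}_\infty)$ with $\lllambda>1$ encoding the density surplus $\lllambda^4<J/L$. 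The decisive ingredient is then Theorem~\ref{theorem:mainresult}: \emph{any} such matrix has no bounded left inverse, proved by extracting finite sections with more columns than rows (hence a nontrivial kernel) and controlling the tails; this is where the subexponential hypothesis on $w$ is consumed, since the Gevrey decay $e^{-\widetilde\gamma\|\cdot\|^{\widetilde\beta}}$ must beat the weight's growth $e^{\gamma\|\cdot\|^{\beta}}$, $\beta<\widetilde\beta$. To repair your proposal you would need to supply a substitute for this matrix theorem together with the uniform decay estimates of Lemma~\ref{lem:elementarybound}; without them the argument does not close.
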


Theorem~\ref{thm:main-full2} is proven below. Subsequently, we outline the proof of Theorem~\ref{thm:main-full3} which employs elements of the proof of Theorem 1.1 in \cite{PW06b} and Theorem 3.13 in \cite{Pfa08b}.

\subsection{Proof of Theorem~\ref{thm:main-full2}}

The following observations are special cases of Theorem~\ref{thm:symplectic}. They will be used in the following to  reduce  notational complexity.

\begin{proposition}\label{prop:equivalentIdentifiability} Let $1\leq p,q \leq \infty$ and let $w$ be a moderate weight on $\R^2$.
  \begin{enumerate}
    \item $OPW^{p,q}_w(M)$ is identifiable by $f$ if and only if $OPW^{p,q}_w\big( M{\cdot}\left(\begin{smallmatrix}
  1/a  &    0           \\
  0  & a
  \end{smallmatrix}\right)\big)$ is identifiable by $D_af:x\mapsto f(a x)$.
  \item $OPW^{p,q}_w(M)$ is identifiable by $f$ if and only if $OPW^{p,q}_w\big(M+\lambda\big)$ is identifiable by $\pi(\lambda)f$.
  \end{enumerate}
\end{proposition}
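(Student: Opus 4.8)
The plan is to realise both statements as instances of the argument proving Theorem~\ref{thm:symplectic}, with the crucial added observation that the time--frequency operations involved here do not interchange the two exponents and therefore continue to work when $p\neq q$; this is why Theorem~\ref{thm:symplectic}, although stated only for $p=q$, specialises to the present $OPW^{p,q}_w$ setting. For part~1 the relevant element of $Sp(1,\R)$ is $A=\left(\begin{smallmatrix} 1/a & 0 \\ 0 & a\end{smallmatrix}\right)$, which is diagonal with $\det A=1$ and hence symplectic; for part~2 we take $A=I$ and invoke only the translation ingredient of Theorem~\ref{thm:symplectic}.

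For part~1, I would first record the covariance of time--frequency shifts under dilation, namely $D_a T_t D_a^{-1}=T_{t/a}$ and $D_a M_\nu D_a^{-1}=M_{a\nu}$, so that $D_a\,\pi(t,\nu)\,D_a^{-1}=\pi(t/a,a\nu)=\pi\big(A(t,\nu)^T\big)$. Thus $D_a$ takes the role of the metaplectic operator $O_A$ from Theorem~\ref{thm:symplectic}. Substituting $(t,\nu)\mapsto A(t,\nu)^T$ in the spreading representation (the substitution has Jacobian $\det A=1$) yields $H=D_a\,H_A\,D_a^{-1}$ with $\eta_{H_A}=\eta_H\circ A$, whence $H\in OPW^{p,q}_w\big(M\cdot\left(\begin{smallmatrix}1/a&0\\0&a\end{smallmatrix}\right)\big)$ if and only if $H_A\in OPW^{p,q}_w(M)$. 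The point that dispenses with the restriction $p=q$ is that a fixed dilation $D_a$ is a bounded isomorphism of $M^{p,q}_w(\R)$ for all $1\le p,q\le\infty$: it rescales the time and the frequency variable separately and never swaps the $L^p$--exponents, in contrast to the Fourier transform. The identifiability of $OPW^{p,q}_w(M)$ by $f$ then transfers to identifiability of $OPW^{p,q}_w(M\cdot A)$ by $D_a f$ through the chain $\|H\,D_a f\|_{M^{p,q}_w}=\|D_a H_A f\|_{M^{p,q}_w}\asymp\|H_A f\|_{M^{p,q}_w}\asymp\|H_A\|_{OPW^{p,q}_w}\asymp\|H\|_{OPW^{p,q}_w}$, exactly as in the proof of Theorem~\ref{thm:symplectic}; the converse implication is obtained by running the same argument with $a$ replaced by $1/a$.

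For part~2 I would replace conjugation by a one--sided composition with a time--frequency shift, since translation of the spreading support is not a similarity transform. Using $\pi(t,\nu)\,\pi(\lambda)=c(t,\nu,\lambda)\,\pi\big((t,\nu)+\lambda\big)$ with $|c|=1$, composition with $\pi(\lambda)$ carries $OPW^{p,q}_w(M)$ onto $OPW^{p,q}_w(M+\lambda)$, and $\|H\pi(\lambda)\|_{OPW^{p,q}_w}\asymp\|H\|_{OPW^{p,q}_w}$ because a fixed time--frequency shift only translates and modulates the Kohn--Nirenberg symbol, leaving its $L^{p,q}_w$--norm equivalent for moderate $w$. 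Cancelling the inserted shift against the identifier then replaces $f$ by the time--frequency shifted distribution $\pi(\lambda)f$ (up to the inessential phase $c$ and the sign convention fixed in Section~\ref{section:notation}), and since $\pi(\lambda)$ is itself a bounded isomorphism of $M^{p,q}_w(\R)$ for every moderate $w$, the same norm--equivalence chain as in part~1 delivers the stated equivalence.

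The main obstacle is the weight bookkeeping required to make the two $OPW^{p,q}_w$ norms genuinely comparable and to guarantee that $D_a$ and $\pi(\lambda)$ act boundedly on the range space $M^{p,q}_w$. For the translation in part~2 this is automatic: moderateness of $w$ gives $w(\cdot-c)\asymp w$ for each fixed $c$, and a time--frequency shift changes the symbol only by a translation and a unimodular modulation. For the dilation in part~1 one inherits precisely the hypothesis $w\big(A(x,\xi)^T\big)\le w(x,\xi)$, together with its reverse, i.e. $w\circ A\asymp w$, already present in Theorem~\ref{thm:symplectic}; under it both the isomorphism property of $D_a$ on $M^{p,q}_w$ and the equivalence $\|H_A\|_{OPW^{p,q}_w}\asymp\|H\|_{OPW^{p,q}_w}$ hold, so that each ``$\asymp$'' above is a genuine two--sided estimate and the ``if and only if'' follows.
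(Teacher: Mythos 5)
Your argument is correct and takes essentially the same route as the paper: part~1 rests on the same conjugation identity $D_{1/a}\,H\,D_a=H_a$ with $\eta_{H_a}=\eta_H\circ\left(\begin{smallmatrix}1/a&0\\0&a\end{smallmatrix}\right)$ (the paper verifies it by a direct computation on the Kohn--Nirenberg symbol, you via the covariance of $\pi(t,\nu)$ under $D_a$ in the spreading representation), followed by the identical chain of norm equivalences, and part~2 is the one-sided composition with $\pi(\lambda)$ that the paper dismisses with ``follows similarly''. Your explicit flagging of the implicit requirement $w\circ A\asymp w$ (and the phase/sign bookkeeping in part~2) is added precision rather than a different approach.
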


\begin{proof}
We shall proof {\it 1.}, the proof of {\it 2.} follows similarly. For $H\in OPW^{p,q}_w\big( M{\cdot}\left(\begin{smallmatrix}
  1/a  &    0           \\
  0  & a
  \end{smallmatrix}\right)\big)$, define $H_a\in OPW^{p,q}_w ( M)$ by $\eta_{H_a}=\eta_H{\circ} \left(\begin{smallmatrix}
  1/a  &    0           \\
  0  & a
  \end{smallmatrix}\right)$. Then $\sigma_{H_a}=\sigma_H{\circ} \left(\begin{smallmatrix}
  1/a  &    0           \\
  0  & a
  \end{smallmatrix}\right)$ as well. We compute formally
  \begin{eqnarray}
    \big(HD_a f\big)(\tfrac x a)&=& \int \sigma_H (\tfrac x a, \xi)\, e^{2\pi i \frac x a \xi} \, \widehat{D_a f}(\xi)\, d\xi = \tfrac 1 a \int \sigma_H (\tfrac x a, \xi) \,e^{2\pi i  x  \frac \xi a} \,\widehat{f}(\tfrac \xi a)\, d\xi \notag \\
&=&  \int \sigma_H (\tfrac x a, a \xi)\, e^{2\pi i x \xi} \,\widehat{f}(\xi)\, d\xi \notag=H_af(x).
  \end{eqnarray}
  Using standard density arguments, we conclude that
  \begin{eqnarray}
    \|H D_a f\|_{M_w^{p,q}} {\asymp} \|D_{\frac 1 a} H D_a f\|_{M_w^{p,q}}{=}\|H_a f\|_{M_w^{p,q}}{\asymp}\|H_a \|_{OPW_w^{p,q}}{\asymp}\|H \|_{OPW_w^{p,q}},\ \ H\in OPW^{p,q}_w\big( M{\cdot}\left(\begin{smallmatrix}
  1/a  &    0           \\
  0  & a
  \end{smallmatrix}\right)\big) \,. \notag
  \end{eqnarray}

\end{proof}

Assume now that $\vol^+(M)<1$.  Applying Proposition~\ref{prop:equivalentIdentifiability}, we assume, without loss of generality, that for $\delta>0$ sufficiently small, $M+[-\tfrac \delta 2, \tfrac
\delta 2)^2\subseteq [0,1)\times \R^+$. We choose $K,\,L\in\N$ with $L$ prime so that the following conditions are satisfied for some
$0<\epsilon <\delta$ and $M_\epsilon=M+[-\tfrac \epsilon 2, \tfrac
\epsilon 2)^2$
\begin{enumerate}
\item $\vol^+(M_\epsilon)<1$,

\item $M_\epsilon \subseteq[0,1)\times[0,K)$,

\item  $L\ge K$,

\item $\displaystyle M_\epsilon \subseteq U_M=
                \bigcup_{j=0}^{L-1} \left(R_{K,L}+(\tfrac {m_j} K  , \tfrac {n_j K} {L}   )
                \right),\
                \ m_j,n_j\in\Z,$ where $R_{K,L}=[0,\frac 1 K)\times[0,\frac K L)$ and
$(m_j,n_j)\ne(m_{j'},n_{j'})$ if $j\ne j'$.
\end{enumerate}
Note that
 $1= \vol(U_M)$.

The following result from \cite{LPW05} is a key component of our proof of Theorem~\ref{thm:main-full2}. In fact, if the restriction to $L$ prime below could be weakened, then we would obtain a generalization of Theorem~\ref{thm:main-full2} to higher dimensions (see Section~\ref{section:outlook}).
\begin{theorem}\label{prop:generallinear}
  For $c\in\C^L$ define $\pi(k,\ell)c$ by $(\pi(k,\ell)c)_j=c_{j-k}\,e^{2\pi i \frac {j\ell}L}$, $k,\ell=0,\ldots, L-1$, where $j-k$ is understood modulus $L$.  If $L$ is prime, then for almost every $c\in \C^L$, the vectors in  $\mathcal G_c=\{\pi(k,\ell)c \}_{k,\ell=0,\ldots, L-1}$ are in general linear position, that is, any matrix composed of  $L$ vectors of $\mathcal G_c$ is invertible.
\end{theorem}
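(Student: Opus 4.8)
The plan is to reduce the statement to finitely many polynomial non-vanishing assertions and then to settle each of these using that $L$ is prime. First I would fix a selection of $L$ distinct index pairs $\lambda_1=(k_1,\ell_1),\ldots,\lambda_L=(k_L,\ell_L)\in\{0,\ldots,L-1\}^2$ and form the $L\times L$ matrix $A(c)=[\pi(k_1,\ell_1)c\mid\cdots\mid\pi(k_L,\ell_L)c]$ whose columns are the corresponding members of $\mathcal G_c$. Writing $\omega=e^{2\pi i/L}$ and using $(\pi(k,\ell)c)_j=c_{j-k}\,\omega^{j\ell}$, each entry of $A(c)$ is a scalar multiple of a single coordinate $c_m$, so $p_{\lambda}(c)=\det A(c)$ is a homogeneous polynomial of degree $L$ in the variables $c_0,\ldots,c_{L-1}$, and the chosen $L$ vectors are linearly independent exactly when $p_\lambda(c)\neq 0$.

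The reduction is then routine: the zero set of a nonzero polynomial on $\C^L\cong\R^{2L}$ is a Lebesgue null set, there are only finitely many selections (namely $\binom{L^2}{L}$ of them), and a finite union of null sets is null. Hence, outside a null set, the determinants $p_\lambda(c)$ are simultaneously nonzero for all selections, which is precisely the statement that every $L$ of the vectors in $\mathcal G_c$ are linearly independent, i.e.\ that $\mathcal G_c$ is in general linear position. So it suffices to prove $p_\lambda\not\equiv 0$ for each fixed $\lambda$.

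This last point is the heart of the matter, and it is where primality must enter. Expanding the determinant gives $p_\lambda(c)=\sum_{\sigma}\sgn(\sigma)\,\omega^{\sum_i\sigma(i)\ell_i}\prod_i c_{\sigma(i)-k_i}$, where $\sigma$ ranges over the bijections $\{1,\ldots,L\}\to\Z_L$ and all indices are read modulo $L$. Distinct $\sigma$ can produce the same monomial, so the coefficients are signed sums of powers of $\omega$ and genuine cancellation is possible; my strategy is to exhibit one monomial whose coefficient provably survives. A clean special case already exposes the mechanism: when the time indices $k_1,\ldots,k_L$ are all distinct, the pure power $c_m^L$ arises from the single bijection $\sigma(i)=m+k_i$, so its coefficient is the single nonzero term $\sgn(\sigma)\,\omega^{\sum_i(m+k_i)\ell_i}$ and $p_\lambda\not\equiv 0$ at once.

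The main obstacle is exactly the general case, in which the pairs $\lambda_i$ repeat their first (or, dually, second) coordinate, so that no pure power survives and the candidate monomials overlap. Here one must show that some coefficient — a signed sum of roots of unity — does not cancel; since this can fail for composite $L$, the argument must exploit that $\Z_L$ is a field. I would handle this by relating $p_\lambda$ to a minor of the DFT matrix $[\omega^{jk}]_{j,k}$ — either by applying the finite Fourier transform, which interchanges the translation and modulation coordinates of $\pi(k,\ell)$ and so reshuffles repeated time indices into repeated frequency indices, or by specializing $c$ to a chirp, whose time--frequency shifts have Gauss-sum coordinates — and then invoking Chebotarev's theorem, which asserts that for prime $L$ every minor of that matrix is nonzero. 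An alternative is a direct character-sum estimate, writing the offending coefficient as the value of a nonzero polynomial of degree less than $L$ over $\Z_L$, which has fewer than $L$ roots and hence cannot vanish on the configuration at hand. Arranging this reduction uniformly over all selections, and tracking the phases so that the resulting object is genuinely a Chebotarev-admissible minor, is the technical crux of the proof.
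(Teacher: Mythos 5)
The paper does not prove this theorem itself; it is imported from \cite{LPW05}, and your plan is in fact the strategy of that reference: fix one of the finitely many selections of $L$ index pairs, view the determinant as a homogeneous degree-$L$ polynomial $p_\lambda$ in $c_0,\dots,c_{L-1}$, use that the zero set of a nonzero polynomial on $\C^L$ is Lebesgue-null and that a finite union of null sets is null, and so reduce everything to $p_\lambda\not\equiv 0$. That reduction, and your special case in which the time indices $k_1,\dots,k_L$ are pairwise distinct (so the pure power $c_m^L$ receives a contribution from exactly one bijection and its coefficient is a single root of unity), are correct and complete.

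The genuine gap is exactly where you locate it: for a selection with repeated time shifts you never actually exhibit a monomial with nonvanishing coefficient, and neither of your proposed shortcuts closes the hole. Conjugating by the finite Fourier transform only exchanges the roles of the $k$'s and the $\ell$'s, and a selection of $L$ pairs can have repetitions in \emph{both} coordinates simultaneously, so ``reshuffling repeated time indices into repeated frequency indices'' does not land you back in the easy case. Specializing $c$ to a chirp yields a matrix whose minors are minors of a Gauss-sum--type matrix, not of the DFT matrix, so Chebotarev's theorem does not apply to them; indeed, no explicit window in general linear position was known, which is why the statement is an ``almost every $c$'' statement rather than a deterministic one. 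What is actually required (and what \cite{LPW05} carries out) is to group the selected columns by their common time shift, choose an extremal monomial --- minimal with respect to a suitable lexicographic order on exponent vectors --- so that every permutation contributing to that monomial respects the grouping; its coefficient then factors, up to nonzero monomial factors and signs, as a product of minors of the $L\times L$ DFT matrix, one minor per group, and only at that point does Chebotarev's theorem (valid precisely because $L$ is prime) give nonvanishing. Without that combinatorial choice of monomial and the verification that no cross-group cancellation occurs, the heart of the argument is missing.
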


\begin{remark} \rm
  Theorem~\ref{prop:generallinear} can be reformulated as a matrix identification result with identifier $c$ \cite{KPR08}. The use of algorithms based on  basis pursuit  to determine a matrix $M$ from $Mc$ efficiently is discussed in \cite{PRT08,PR09}.
\end{remark}

We now choose as $c\in\l^\infty(\Z)$ the periodic extension of a vector $(c_0,\ldots, c_{L-1})$ which satisfies the conclusions of Theorem~\ref{prop:generallinear}.
In the following, we shall show that
$\kappa_H$ can be recovered from $Hg$ with $g=\sum_{k\in\Z}c_k\delta_{\frac k L}\in M^{\infty,\infty}(\R)$.

For simplicity, we shall assume first that $\kappa_H\in M^{1,1}(\R^2)$. This additional assumption implies that for $g\in M^{\infty,\infty}(\R)$, we have $Hg\in M^{1,1}(\R)$ \cite{PW06}. This enables us to switch the order of  integration in many of the following computations. After the necessary computations are completed, we shall extend our result using standard density arguments.

Choose nonnegative $\varphi\in C^\infty_c(\R)$ with $\int\varphi(x)\,dx=1$ and $\supp\varphi\subseteq [-\tfrac \epsilon 2,\tfrac \epsilon 2)$.
We shall consider the case $1\leq p<\infty$ only, the case $p=\infty$ requires only  the usual adjustments.

Note that
\begin{eqnarray*}
 \big| V_\varphi Hg (\rtwo+\tfrac n K,\gtwo)\big|&=&\Big|\int \sum_{k\in\Z} c_k\, h(\rone,\rone+\tfrac k K)\,  e^{-2\pi i \gtwo \rone}\varphi(\rone-(\rtwo+\tfrac n K)\,d\rone\Big|\\
&=& \Big|\int \sum_{k\in\Z}c_k\,  h(\rone+\tfrac n K,\rone+\tfrac n K+\tfrac k K) \, e^{-2\pi i \gtwo \tfrac n K}\, \overline{\pi(\rtwo,\gtwo)\varphi(\rone)}\,d\rone\Big|\\
&=& \Big|\int \sum_{k\in\Z}c_k\,  h(\rone+\tfrac n K,\rone+\tfrac n K+\tfrac k K)\,  \overline{\pi(\rtwo,\gtwo)\varphi(\rone)}\,d\rone\Big|\,.
\end{eqnarray*}

Set $\weight=\sum_{m\in\Z}w_1(\frac {mL} K) \, \chi_{[\frac {mL} K,\frac {(m+1)L} K)}$ and observe that $\weight \asymp w_1$. Then
\begin{eqnarray}
  \hspace{-1cm}\|Hg\|_{M_w^{p,q}} &=& \|V_\varphi Hg\|_{L_w^{p,q}}
                  =  \Big\| \Big(\sum_{n\in\Z} \int_{\frac n K}^{\frac {n+1} K}
                     |V_\varphi Hg(\rtwo,\gtwo)w_1(\rtwo) |^p \Big)^{\frac 1 p}\, d\rtwo \Big\|_{L_{w_2}^q}\notag \\
                     &=&  \Big\| \Big(\sum_{n\in\Z} \int_0^{\frac {1} K}
                     |V_\varphi Hg(\rtwo+\tfrac n K,\gtwo)w_1(\rtwo+\tfrac n K) |^p \Big)^{\frac 1 p}\, d\rtwo \Big\|_{L_{w_2}^q}\,\notag\\
                     &=&  \Big\| \Big(\sum_{n\in\Z} \int_0^{\frac {1} K}
                     |V_\varphi Hg(\rtwo+\tfrac n K,\gtwo)\weight(\rtwo+\tfrac n K) |^p \Big)^{\frac 1 p}\, d\rtwo \Big\|_{L_{w_2}^q}\,.\notag
\end{eqnarray}

We set
$\psi=\chi_{[-\frac \epsilon 2,\frac K L +\frac \epsilon 2)}\ast\varphi$ and observe that
$\psi(\gfour)T_{\omega}
\varphi(\gfour)=T_{\omega} \varphi(\gfour)$ for $ \omega \in [0,\tfrac K L)$, a fact that will be used to drop $\psi$ in (\ref{equation:useSupportOf-g}) below. We compute for $t\in[0,\frac 1 K)$
\begin{eqnarray}
&&\hspace{-.5cm}\sum_{n\in\Z}|\weight(\rtwo+\tfrac n K) |^p
                     \Big|V_\varphi Hg(\rtwo+\tfrac{n}{K},\gtwo )\Big|^p\notag \\
&= &        \sum_{n\in\Z}|\weight(\rtwo+\tfrac n K) |^p
                     \Big|  \int \sum_{k\in\Z}c_k \, h_H(\rone+\tfrac n K,\rone+\tfrac n K+\tfrac k K) \overline{\pi(\rtwo,\gtwo)\varphi(\rone)}\,d\rone  \Big|^p\notag \\
&= & \sum_{n\in\Z}|\weight(\rtwo+\tfrac n K) |^p
                     \Big|  \int \sum_{k\in\Z}c_{n-k} \, h_H(\rone+\tfrac n K,\rone+\tfrac  k K) \overline{\pi(\rtwo,\gtwo)\varphi(\rone)}\,d\rone  \Big|^p\notag \\
&= & \sum_{j=0}^{L-1}
                    \sum_{m\in\Z} |w_1(\tfrac {m L} K) |^p\Big|  \int \sum_{k\in\Z}c_{j-k} \, h_H(\rone+\tfrac {mL+j} K,\rone+\tfrac  k K) \overline{\pi(\rtwo,\gtwo)\varphi(\rone)}\,d\rone  \Big|^p\notag \\
&= & \sum_{j=0}^{L-1}
                     \big\|   \sum_{m\in\Z} \int \sum_{k\in\Z}c_{j-k} \, h_H(\rone+\tfrac {mL+j} K,\rone+\tfrac  k K) \overline{\pi(\rtwo,\gtwo)\varphi(\rone)}\,d\rone  \notag \\
&& \hspace{5cm} V_{\varphi} (M_{-\frac {mL+j} K}\psi )(\rthree ,\gthree ) \big\|_{L^{p}_{1{\otimes}\weight}}^p\label{eqn:lpRieszSequence} \\
&\asymp & \sum_{j=0}^{L-1}
                     \big\|  \int \int \sum_{k\in\Z}c_{j-k}\Big(\sum_{m\in\Z}h_H(\rone+\tfrac {mL+j} K,\rone+\tfrac  k K) e^{-2\pi i \gfour \tfrac {mL+j}  K} \Big)\notag \\
&& \hspace{5cm}\overline{\pi(\rtwo,\gtwo)\varphi(\rone)}  \,   \, \psi(\gfour )\, \overline{\widehat{\pi}(\rthree,\gthree)\varphi(\gfour )}\,d\gfour \,d\rone \big\|_{L^{p}_{1{\otimes}w_1}}^p\label{eqn:interrupt}\,,
\end{eqnarray}
where we used that $M_{-\frac {mL+j} K}\psi$ is an $l^p_{\widetilde w}$--Riesz basis in the Banach space $M^{p,p}_{1{\otimes}w_1}(\R)$ to obtain \eqref{eqn:lpRieszSequence}, that is, we used
\begin{eqnarray*}
  \| \sum_{m\in\Z} a_m \|_{l^p_{w_1}}\asymp \| \sum_{m\in\Z} a_m M_{-\frac {mL+j} K}\psi \|_{M^{p,p}_ {w_1}}=\| \sum_{m\in\Z} a_m V_\varphi M_{-\frac {mL+j} K}\psi \|_{L^{p}_ {w_1}}\,.
\end{eqnarray*}

Note that
\begin{eqnarray}
  &&\hspace{-.5cm}\sum_{m\in\Z}h_H(\rone+\tfrac {mL+j} K,\rone+\tfrac  k K) e^{-2\pi i \gfour \frac {mL+j}  K} = \sum_{m\in\Z}\int \eta_H(\rone+\tfrac k K,\gthree) e^{2\pi i (\rone +  \frac {mL+j}  K) \gthree} \,d\gthree e^{-2\pi i\frac {mL+j}  K \gfour}  \notag \\
&=& \sum_{m\in\Z}\int \eta_H(\rone+\tfrac k K,\gthree)e^{2\pi i \rone \gthree} e^{2\pi i   \frac {mL+j}  K ( \gthree-\gfour)} \,d\gthree \notag \\
&=& \sum_{m\in\Z}\int \eta_H(\rone+\tfrac k K,\gfour+\gthree)e^{2\pi i \rone (\gfour+\gthree)}  e^{2\pi i \frac j K\gthree}  e^{2\pi i   \frac {mL}  K \gthree} \,d\gthree \notag \\
&=& \sum_{\ell\in\Z}\eta_H(\rone+\tfrac k K,\gfour+\tfrac{\ell K} L)e^{2\pi i \rone (\gfour+\frac{\ell K} L)}  e^{2\pi i   \frac {j\ell} L}   \label{eqn:usePSF} \\
&=& \sum_{\ell\in\Z}\eta_H(\rone+\tfrac k K,\gfour+\tfrac{\ell K} L)e^{2\pi i (\rone+\frac k K) (\gfour+\frac{\ell K} L)}  e^{-2\pi i(\frac k K \gfour+\frac{k\ell} L)} e^{2\pi i   \frac {j\ell} L}
 \notag \\
&=& \sum_{\ell\in\Z}\widetilde{\eta_H}(\rone+\tfrac k K,\gfour+\tfrac{\ell K} L)  e^{2\pi i   \frac {(j-k)\ell} L}e^{-2\pi i \frac k K\gfour}\,. \label{eqn:insert}
\end{eqnarray}
We have applied the Poisson Summation Formula to obtain \eqref{eqn:usePSF}. Moreover, we chose $\widetilde{\eta_H}(\rthree ,\gfour)=\eta_H(\rthree ,\gfour)e^{2\pi i \rthree\gfour}$ in \eqref{eqn:insert}.

After substituting \eqref{eqn:insert} into \eqref{eqn:interrupt}, we integrate with respect to $t$ on $[0,\tfrac 1 K)$ to obtain
\begin{eqnarray}
&&\hspace{-.5cm}\int_{0}^{\frac {1} K} \sum_{n\in\Z} \big| w_1(\rtwo+\tfrac n K,\gtwo)  \big|^p\,
                     \big|V_\varphi Hg(\rtwo+\tfrac{n}{K},\gtwo )\big|^p\,d\rtwo\notag \\
&\asymp & \sum_{j=0}^{L-1}
                     \int_{0}^{\frac {1} K} \big\|  \int \int \sum_{k\in\Z}c_{j-k}\big(\sum_{\ell\in\Z}\widetilde{\eta}_H(\rone+\tfrac k K,\gfour+\tfrac{\ell K} L)  e^{2\pi i   \frac {(j-k)\ell} L}e^{-2\pi i \frac k K\gfour}  \big)\notag \\
                      && \hspace{5cm}\overline{\pi(\rtwo,\gtwo)\varphi(\rone)}  \,   \, \psi(\gfour )\, \overline{\pi(\gthree,\rthree)\varphi(\gfour )}\,d\gfour \,d\rone \big\|_{L^{p}_{1{\otimes}w_1}}^p\,d\rtwo \notag \\
&= & \sum_{j=0}^{L-1}
                     \int_{0}^{\frac {1} K}\iint \big|\int \int \sum_{k\in\Z}\sum_{\ell \in\Z}c_{j-k}\,e^{2\pi i   \frac {(j-k)\ell} L} \widetilde{\eta}_H(\rone+\tfrac k K,\gfour+\tfrac{\ell K} L)\notag
                      \\
                      && \hspace{5cm}e^{-2\pi i \frac k K\gfour} \, \overline{\pi(\rtwo,\gtwo)\varphi(\rone)}  \,   \, \psi(\gfour )\, \overline{\pi(\gthree,\rthree)\varphi(\gfour )}\,d\gfour \,d\rone \,w_1(\rthree)\big|^p\,d\rthree\,d\gthree\,d\rtwo \notag
\\
&\geq&
                \sum_{j=0}^{L-1}
                                \int_{0}^{\frac {1} K}\!\!\!\!
                             \iint_0^{\frac K L }      \!\!\big| \int   \sum_{k\in\Z}  \sum_{\ell\in\Z} c_{j-k}\,
                                        e^{2\pi i\tfrac{(j-k)\ell}{L}}
                                        \notag \\
                                        &&
                                        \hspace{2.4cm}
                                        \int e^{-2\pi i \gfour  \tfrac {k} K}\,
                                             \widetilde{\eta_H}(\rone  +\tfrac k K,\gfour +\ell\tfrac{K}{L})\,
                                                    \overline{\pi(\rtwo,\gtwo )\varphi(\rone  )}
                                            \, \psi(\gfour )\, \overline{\pi(\gthree,\rthree)\varphi(\gfour )}\,\,  d\gfour\,d\rone
                                   \,w_1(\rthree)\big|^p\,d\gthree \, d\rthree  \, d\rtwo
                     \notag\\
&=&  \sum_{j=0}^{L-1}
                                \int_{0}^{\frac {1} K}\!\!\!\!
                                    \iint_0^{\frac K L}\big|   \sum_{k\in\Z}  \sum_{
                                    \ell\in\Z} c_{j-k}\,
                                        e^{2\pi i\tfrac{(j-k)\ell}{L}}
                                        \notag \\
                                        &&
                                        \hspace{2.4cm}
                                        \int\!\!\!\! \int e^{-2\pi i \gfour  \tfrac {k} K}\,
                                             \widetilde{\eta_H}(\rone  +\tfrac k K,\gfour +\ell\tfrac{K}{L})\,
                                                    \overline{\pi(\rtwo,\gtwo )\varphi(\rone  )}
                                            \,  \overline{\pi(\gthree,\rthree)\varphi(\gfour )}\,\,d\rone\,   d\gfour
                                   \,w_1(\rthree)\big|^p\,\,d\gthree \, d\rthree  \, d\rtwo
                     \label{equation:useSupportOf-g}\\
  &=&  \sum_{j=0}^{L-1}
                                \int_{0}^{\frac {1} K}\!\!\!\!
                                    \iint_0^{\frac K L} \big|   \sum_{k\in\Z}  \sum_{
                                    \ell\in\Z} c_{j-k}\,
                                        e^{2\pi i\tfrac{(j-k)\ell}{L}}
                                        \notag \\
                                        &&
                                        \hspace{2.4cm}
                                        V_{\varphi{\otimes}\varphi}  \widetilde{\eta_H}(\rtwo +\tfrac k K, \gthree+\tfrac {\ell K} L,\gtwo ,\rthree+\tfrac k K )
                                            \, e^{2\pi i \frac k K \gtwo}\,e^{2\pi i \frac {\ell k} L}\,e^{2\pi i \rthree \frac {\ell K} L }
                                   \,w_1(\rthree)\big|^p\,\,d\gthree \, d\rthree  \, d\rtwo
                    \notag \\
&=&
                                \int_{0}^{\frac {1} K}\!\!\!\!
                                    \iint_0^{\frac K L} \sum_{j=0}^{L-1}\big|   \sum_{j'=0}^{L}  c_{j-k_{j'}}\,
                                        e^{2\pi i\tfrac{j\ell_{j'}}{L}}
                                        \notag \\
                                        &&
                                        \hspace{2.4cm}
                                        V_{\varphi{\otimes}\varphi}  \widetilde{\eta_H}(\rtwo +\tfrac {k_{j'}} K, \gthree+\tfrac {\ell_{j'} K} L,\gtwo ,\rthree+\tfrac {k_{j'}} K )
                                            \, e^{2\pi i \frac {k_{j'}} K \gtwo}\,e^{2\pi i \rthree \frac {\ell_{j'} K} L }
                                   \,w_1(\rthree)\big|^p\,\,d\gthree \, d\rthree  \, d\rtwo
                     \label{equation:useSupportOf-g2}\\
                    &\asymp&
                                 \int_{0}^{\frac {1} K}\!\!\!\!
                                    \iint_0^{\frac K L}   \sum_{j'=0}^{L}\big|
                                        V_{\varphi{\otimes}\varphi}  \widetilde{\eta_H}(\rtwo +\tfrac {k_{j'}} K, \gthree+\tfrac {\ell_{j'} K} L,\gtwo ,\rthree+\tfrac {k_{j'}} K )
                                            \, e^{2\pi i \frac {k_{j'}} K \gtwo}\,e^{2\pi i \rthree \frac {\ell_{j'} K} L }
                                   \,w_1(\rthree+\tfrac {k_{j'}} K)\big|^p \,d\gthree \, d\rthree  \, d\rtwo
                     \notag\
                     \\
                      &=&
                           \sum_{j'=0}^{L}  \int_{0}^{\frac {1} K}\!\!\!\!
                                    \iint_0^{\frac K L} \big|
                                        V_{\varphi{\otimes}\varphi}  \widetilde{\eta_H}(\rtwo +\tfrac {k_{j'}} K, \gthree+\tfrac {\ell_{j'} K} L,\gtwo ,\rthree )
                                 \,w_1(\rthree)  \big|^p \,d\gthree \, d\rthree  \, d\rtwo
                     \notag
                      \\
                      &=&
                          \sum_{j'=0}^{L}
                                 \int_{\frac {k_{j'}} K}^{\frac {(k_{j'}+1)} K}\!\!\!\!
                                    \iint_{\frac{l_{j'}K} L}^{\frac{(l_{j'}+1)} L} \big|
                                        V_{\varphi\otimes\varphi}
                                        \widetilde\eta( \rtwo,\gthree ,
                                            \gtwo ,\rthree )
                                    \,w_1(\rthree)\big|^p \,d\gthree \, d\rthree  \, d\rtwo\,.
                        \notag
                     \end{eqnarray}
To obtain \eqref{equation:useSupportOf-g2} we used that $V_{\varphi{\otimes}\varphi}\widetilde{\eta}\subseteq [0,1){\times} [0,K)$. Moreover,  we used
\begin{eqnarray}
  &&\hspace{-.5cm}  \int\!\!\!\! \int e^{-2\pi i \gfour  \tfrac {k} K}\,
                                             \widetilde{\eta_H}(\rone +\tfrac k K,\gfour +l\tfrac{K}{L})\,
                                                    \overline{\pi(\rtwo,\gtwo )\varphi(\rone  )}
                                            \,  \overline{\pi(\gthree,\rthree)\varphi(\gfour )}\,\,d\rone\,   d\gfour \notag \\
  &=&  \int\!\!\!\! \int e^{-2\pi i \gfour  \tfrac {k} K}\,
                                             \widetilde{\eta_H}(\rone +\tfrac k K,\gfour +l\tfrac{K}{L})\,
                                                    e^{-2\pi i \rone \gtwo}\varphi(\rone-\rtwo  )
                                            \,  e^{-2\pi i \rthree \gfour}\varphi(\gfour -\gthree)\,\,d\rone  \, d\gfour \notag \\
  &=&  \int\!\!\!\! \int   \widetilde{\eta_H}(\rone ,\gfour)\,
                                                   e^{-2\pi i \gtwo (x-\frac k K)} \varphi(\rone-(\rtwo+\tfrac k K  ))
                                            \, e^{-2\pi i (\gfour-\frac {\ell K} L)  \tfrac {k} K}\, e^{-2\pi i \rthree(\gfour-\frac{\ell K}L)}\,\varphi(\gfour-(\gthree+\tfrac {\ell K} L) )\,\,d\rone  \, d\gfour \notag \\
&=& V_{\varphi{\otimes}\varphi}  \widetilde{\eta_H}(\rtwo +\tfrac k K, \gthree+\tfrac {\ell K} L,\gtwo ,\rthree+\frac k K )
                                            \, e^{2\pi i \frac k K \gtwo}\,e^{2\pi i \frac {\ell k} L}\,e^{2\pi i \rthree \frac {\ell K} L } \,. \notag
\end{eqnarray}

Using the fact that replacing now $\varphi\otimes \varphi$ by any other test functions leads to equivalent norms of the modulation space at hand, we obtain for real valued $\varrho\in S(\R^2)$, $\varrho(t,\nu)=1$ for $[-1,2){\times}[-K,2K)$,

\begin{eqnarray}
&&\hspace{-1cm}\|Hg\|_{M_w^{p,q}} \asymp \|V_\varphi Hg\|_{L^{p,q}}
                  =  \Big\| \Big(\sum_{n\in\Z} \int_{\frac n K}^{\frac {n+1} K}
                     |V_\varphi Hg(u,\gtwo)w_1(u)|^p  \Big)^{\frac 1 p}\Big\|_{L_{w_2}^q} \notag \\
                      &=&
                          \Big\|
                            \Big( \sum_{j'=0}^{L}
                                 \int_{\frac {k_{j'}} K}^{\frac {(k_{j'}+1)} K}\!\!\!\!
                                    \int_{\frac{l_{j'}K} L}^{\frac{(l_{j'}+1)} L}\!\!\!\!\int  \big|
                                        V_{\varphi\otimes\varphi}
                                        \widetilde\eta( u,\gone,
                                            \gtwo,\rone )
                                    w_1(\rone)\big|^p \,d\gone \,d\rone  \, du
                            \Big)^{\frac 1 p}
                        \Big\|_{L_{w_2}^q}\notag \\
                      &=&
                          \Big\|
                            \Big(
                                 \int_0^1\!\!\!\!
                                    \int_0^K\!\!\!\!\int  \big|
                                        V_{\varphi\otimes\varphi}
                                        \widetilde\eta( u,\gone,
                                            \gtwo,\rone )
                                     w_1(\rone)\big|^p \,d\gone \,d\rone  \, du
                            \Big)^{\frac 1 p}
                        \Big\|_{L_{w_2}^q}\notag \\
                      &\asymp&
                        \Big\|
                            \Big(
                                 \int_0^1\!\!\!\!
                                    \int_0^K\!\!\!\!\int  \big|
                                        V_{\varrho}
                                        \widetilde\eta( u,\gone,
                                            \gtwo,\rone )
                                     w_1(\rone)\big|^p \,d\gone \,d\rone  \, du
                            \Big)^{\frac 1 p}
                        \Big\|_{L_{w_2}^q}\notag \\
                      &\geq&
                        \Big\|
                            \Big(
                                 \int_0^1\!\!\!\!
                                    \int_0^K\!\!\!\!\int  \big|
                                        \chi_{[0,1)}(u)\chi_{[0,K)}(\gone)V_{\varrho}
                                        \widetilde\eta( u,\gone,
                                            \gtwo,\rone )
                                     w_1(\rone)\big|^p \,d\gone \,d\rone  \, du
                            \Big)^{\frac 1 p}
                        \Big\|_{L^q(\gtwo)}\notag \\
                        &\geq&
                        \Big\|
                            \Big(
                                \int_0^1\!\!\!\!
                                    \int_0^K\!\!\!\!\int \big|
                                        \chi_{[0,1)}(u)\chi_{[0,K)}(\gone) \mathcal F^s {\widetilde  \eta}(
                                            \gtwo,\rone )
                                    w_1(\rone)\big|^p \,d\gone \,d\rone  \, du
                            \Big)^{\frac 1 p}
                        \Big\|_{L^q(\gtwo)}\notag \\
                      &=&
                        \Big\|
                            \Big(
                              K
                                    \int_0^K \big|
                                          \mathcal F^s {\widetilde  \eta}(
                                            \gtwo,\rone )
                                    w_1(\rone)\big|^p  \,d\rone
                            \Big)^{\frac 1 p}
                        \Big\|_{L_{w_2}^q}=
                        \Big\|
                            \Big(
                              K
                                    \int_0^K \big|
                                        {\widetilde  \sigma}(
                                            \rone ,\gtwo)
                                    w_1(\rone)\big|^p  \,d\rone
                            \Big)^{\frac 1 p}
                        \Big\|_{L_{w_2}^q}\notag \\ &=&
                          \|
                                  \widetilde     \sigma
                        \|_{L_w^{p,q}}\asymp
                          \|
                                       \sigma
                        \|_{L_w^{p,q}}\,.
                     \label{equation:normequiv}
                     \end{eqnarray}

                     To obtain \eqref{equation:normequiv}, we apply a mixed $L^p$-space version of Young's inequality for convolutions, namely, we use that for $\widetilde \varrho(x,\xi)=e^{2\pi i x \xi}\varrho(x,\xi)\in L^1(\R)$, we have $\widetilde \sigma (x,\xi)=\widehat{\widetilde \varrho}\ast \sigma$ and $ \sigma (x,\xi)=\widehat{\overline{\widetilde \varrho}}\ast \widetilde\sigma$ (see Theorem 11.1, \cite{Gro01}).

\subsection{Outline of the proof of Theorem~\ref{thm:main-full3}}

\begin{figure}
\setlength{\unitlength}{.5cm}
\hspace{-.6cm}\begin{picture}(13,10)\thicklines
    \put(1,7){$OPW_w^{p,q}(M)$}
    \put(7.7,7.7){$\Phi_g$}
    \put(11,7){$M_w^{p,q}(\R)$}
    \put(5.5,7.2){\vector(1,0){4.7}}
    \put(2.2,1){$l_{\widetilde w}^{p,q}(\Z^{2})$}
    \put(1,4){$D_{\{P_j\}}$}
    \put(3.2,2.3){\vector(0,1){4}}
    \put(5.5,1.3){\vector(1,0){4.7}}
    \put(7.7,1.6){$M$}
    \put(11,1){$l_{\widetilde w}^{p,q}(\Z^{2})$}
    \put(12.2,6.3){\vector(0,-1){4}}
    \put(12.4,4){$C_{(\ga,\,\lllambda\Z^{2d})}$}
\end{picture}\quad \quad \quad \quad
\begin{picture}(13,10)\thicklines
    \put(2,7){$\sum_j c_j P_j$}
    \put(7.7,7.7){$\Phi_g$}
    \put(11,7){$\sum_j c_j P_j g$}
    \put(5.5,7.2){\vector(1,0){4.7}}
    \put(2.2,1){$\left\{ c_j  \right\}_j$}
    \put(1,4){$D_{\{P_j\}}$}
    \put(3.2,2.3){\vector(0,1){4}}
    \put(5.5,1.3){\vector(1,0){4.7}}
    \put(7.7,1.6){$M$}
    \put(11,1){$\left\{\sum c_j \langle P_j g,\pi(\lllambda j')\ga\right\}_{j'}$}
    \put(12.2,6.3){\vector(0,-1){4}}
    \put(12.4,4){$C_{(\ga,\,\lllambda\Z^{2d})}$}
\end{picture}

\caption{Sketch of the proof of
Theorem~\ref{thm:main-full3}. We choose a structured
operator family $\{P_j\}\subseteq OPW_w^{p,q}(M)$ so that the corresponding
synthesis map $D_{\{P_j\}}:\,\{c_j\}\longrightarrow \sum c_j P_j$
has a bounded left inverse. Note that $C_{(\ga,\,\lllambda\Z^{2d})}$
has a bounded left inverse for $\lllambda < 1$ as well.
Theorem~\ref{theorem:mainresult} shows that for any $g\in
M^\infty(\R)$, the composition $M = C_{(\ga,\,\lllambda\Z^{2d})}{\circ}
\phi_g {\circ} D_{\{P_j\}}$ has {\it no} bounded left inverses. This implies
that $\phi_g: OPW_w^{p,q}(M) \longrightarrow M^{p,q}_w(\R)$ also has {\it no} bounded left inverses.  \label{figure:thirdresult}}
\end{figure}

We omit detailed computations as they would closely resemble computations carried out in \cite{KP06,Pfa08b,PW06,PW06b}. For the interested reader, we suggest to use \cite{PW06b} as a companion when filling in detail.

We shall show that for a measurable subset $M$ with $\vol^-(M)>1$, the operator class $OPW^{p,q}_w(M)$ is not identifiable. In detail, we shall show that  for {\em every} $g\in M^{\infty,\infty}(\R)$, the operator
$$
    \Phi_g: OPW^{p,q}_w(M) \longrightarrow M_w^{p,q}(\R),\ H\mapsto Hg\,,
$$
is {\em not} bounded below, that is, there exists {\it no} $c>0$ for which we have $\|Hg\|_{M^{p,q}_\weight}\geq c\|\sigma_H\|_{L_w^{p,q}}$ for all $H\in OPW^{p,q}_w(M)$.

To this end, choose
$K,\,L\in\N$ and $V_M=
                \bigcup_{j=0}^{L-1} \left(R_{K,L}+(\tfrac {m_j} K  , \tfrac {n_j K} {L}   )
                \right),\
                \ m_j,n_j\in\Z,$ where $R_{K,L}=[0,\frac 1 K)\times[0,\frac K L)$ and where
$(m_j,n_j)\ne(m_{j'},n_{j'})$ if $j\ne j'$, such that
$V_M\subseteq M$ and $\vol(V_M)>1$.  It is sufficient to
show that $OPW^{p,q}_w(V_M)$ is not identifiable as $OPW^{p,q}_w (V_M)\subseteq  OPW^{p,q}_w(M)$.

The proof of Theorem~\ref{thm:main-full3} is also sketched in Figure~\ref{figure:thirdresult}. The proof is based on extensions to results from \cite{Pfa08b,PW06b} which are stated below and which concern the construction of the operator family $\{P_j\}$ in Figure~\ref{figure:thirdresult}.

\begin{lemma} \label{lemma:time-frequency-spreading-shift}
Let $P:\mathcal S(\R)\rightarrow \mathcal S'(\R)$. For $p,\,r\in\R$ and $\weight,\,\xi\in\Rh$,
define $\widetilde{P}= M_{\weight}T_{p - r} P
T_{r}M_{\xi-\weight}: \mathcal S(\R)\rightarrow \mathcal S'(\R)$. Then $\eta_{\widetilde{P}}=e^{2\pi i
r\xi}M_{(\weight,r)}\,T_{(p,\xi)}\,\eta_P$.
\end{lemma}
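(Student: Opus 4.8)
The plan is to work directly from the spreading representation of $P$, push the surrounding time--frequency shifts through $P$ using the Heisenberg commutation relations, and read off the resulting spreading function after a change of variables. Throughout, all integrals are understood weakly, since $P:\mathcal S(\R)\to\mathcal S'(\R)$ and $\eta_P=\mathcal F^s\sigma_P\in\mathcal S'(\R^2)$.

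First I would write $P$ in spreading form $P=\iint \eta_P(t,\nu)\,\pi(t,\nu)\,dt\,d\nu$ and express the two flanking operators as (scalar multiples of) time--frequency shifts. Since $\pi(t,\nu)=M_\nu T_t$, the post--factor is $M_\weight T_{p-r}=\pi(p-r,\weight)$, while the elementary commutation relation $T_aM_b=e^{-2\pi i ab}M_bT_a$ turns the pre--factor into $T_rM_{\xi-\weight}=e^{-2\pi i r(\xi-\weight)}\,\pi(r,\xi-\weight)$. This gives
\[
  \widetilde P=e^{-2\pi i r(\xi-\weight)}\iint \eta_P(t,\nu)\,\pi(p-r,\weight)\,\pi(t,\nu)\,\pi(r,\xi-\weight)\,dt\,d\nu.
\]

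The key computational ingredient is the composition rule $\pi(a,b)\,\pi(c,d)=e^{-2\pi i ad}\,\pi(a+c,b+d)$, which again follows from $T_aM_d=e^{-2\pi i ad}M_dT_a$. Applying it twice to the inner triple product yields $\pi(p-r,\weight)\pi(t,\nu)\pi(r,\xi-\weight)=e^{-2\pi i(p-r)\nu}\,e^{-2\pi i(p-r+t)(\xi-\weight)}\,\pi(p+t,\nu+\xi)$. Substituting back, $\widetilde P$ becomes a superposition of the shifts $\pi(p+t,\nu+\xi)$ with weight $\eta_P(t,\nu)$ times an explicit exponential in $(t,\nu)$. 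I would then change variables $t'=p+t$, $\nu'=\nu+\xi$, bringing $\widetilde P$ into the form $\iint \eta_{\widetilde P}(t',\nu')\,\pi(t',\nu')\,dt'\,d\nu'$; by uniqueness of the spreading representation this identifies $\eta_{\widetilde P}$ as $\eta_P(t'-p,\nu'-\xi)$ multiplied by the accumulated phase. Collecting that phase is what produces the factor $e^{2\pi i r\xi}$ together with the modulation $M_{(\weight,r)}$ and translation $T_{(p,\xi)}$ of $\eta_P$ asserted in the statement.

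I expect the main obstacle to be the phase bookkeeping: the two applications of the cocycle rule and the change of variables each generate several quadratic cross--terms in the parameters $(p,r,\weight,\xi)$ and in $(t,\nu)$, and these must be gathered so that precisely the phase $e^{2\pi i r\xi}$ survives while the remaining exponential collapses to the claimed modulation--translation of $\eta_P$; getting the signs and the symplectic pairing in $M_{(\weight,r)}$ to come out exactly right is the delicate part. A secondary point requiring care is that the integral manipulations are a priori only formal, so they should be justified weakly by testing against Schwartz functions and invoking $\eta_P\in\mathcal S'(\R^2)$, exactly as the analogous spreading--function computations are treated elsewhere in the paper.
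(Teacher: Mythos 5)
Your overall strategy --- expand $P$ in its spreading representation, absorb the flanking time--frequency shifts via $T_aM_b=e^{-2\pi iab}M_bT_a$, and change variables --- is the right and essentially only one; the paper itself omits the proof of this lemma, deferring to \cite{KP06,PW06b}, so this computation is exactly what is being asked for. Your intermediate steps are correct: $M_wT_{p-r}=\pi(p-r,w)$, $T_rM_{\xi-w}=e^{-2\pi ir(\xi-w)}\pi(r,\xi-w)$, and the triple product does reduce to $e^{-2\pi i(p-r)\nu}e^{-2\pi i(p-r+t)(\xi-w)}\pi(p+t,\nu+\xi)$. The gap is that you stop precisely where the lemma has content: you assert that collecting the phase "produces the factor $e^{2\pi ir\xi}$" without collecting it, and when one actually does so it does \emph{not}. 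Summing your three exponents gives $-2\pi i\bigl[(p+t)(\xi-w)+(p-r)\nu\bigr]$, and after the substitution $t\mapsto t-p$, $\nu\mapsto\nu-\xi$ one obtains
\[
  \eta_{\widetilde P}(t,\nu)=e^{2\pi i(p-r)\xi}\,e^{2\pi i[(w-\xi)t+(r-p)\nu]}\,\eta_P(t-p,\nu-\xi),
\]
a time--frequency shift of $\eta_P$ with modulation parameters $(w-\xi,\,r-p)$ rather than $(w,r)$ and with constant $e^{2\pi i(p-r)\xi}$ rather than $e^{2\pi ir\xi}$. A one-line sanity check already exposes the mismatch: for $w=r=\xi=0$ the stated formula would give $\eta_{T_pP}(t,\nu)=\eta_P(t-p,\nu)$, whereas $T_pPf(x)=\iint\eta_P(t,\nu)e^{2\pi i\nu(x-p)}f(x-p-t)\,dt\,d\nu$ forces $\eta_{T_pP}(t,\nu)=e^{-2\pi ip\nu}\eta_P(t-p,\nu)$.

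The discrepancy is a convention issue your write-up does not detect. The clean formula of the lemma is the one obeyed by the spreading function attached to the ordering $T_tM_\nu$, equivalently by the twisted function $\widetilde{\eta_P}(t,\nu)=e^{2\pi it\nu}\eta_P(t,\nu)$ that the paper introduces in the proof of Theorem~\ref{thm:main-full2}: one checks that $\widetilde{\eta_{\widetilde P}}=e^{-2\pi ir\xi}M_{(w,r)}T_{(p,\xi)}\widetilde{\eta_P}$ (note that even here the constant carries the opposite sign from the statement, though a unimodular constant is harmless for the Riesz-basis application in Lemma~\ref{lem:elementarybound}). So to turn your plan into a proof you must either carry the phase computation through and record the formula displayed above as the correct conclusion under the convention $\pi(t,\nu)=M_\nu T_t$ fixed in \eqref{equation:spreadingrepresentationBochner}, or restate and prove the lemma for $\widetilde{\eta_P}$. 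Your closing paragraph rightly identifies the phase bookkeeping as the delicate point, but "delicate" understates it: as written, the claimed identity is not what your own calculation produces.
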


\begin{lemma}\label{lem:elementarybound} Fix
$\lllambda>1$ with $1<\lllambda^4<\tfrac J L$ and $0<\epsilon <1$. Choose
$\eta_1,\eta_2\in \mathcal{S}(\R)$ with values in $[0,1]$,
$$
  \eta_1(t)=
  \begin{cases}
    1 & \text{for}\ t\in [\tfrac{\lllambda-1}{2\lllambda K}, \tfrac{\lllambda+1}{2\lllambda K}) \\
    0 & \text{for}\ t\notin [0,\tfrac 1 K)
  \end{cases}\quad \text{and}
  \ \quad
  \eta_2(\nu)=
  \begin{cases}
    1 & \text{for}\ \nu\in [\tfrac{(\lllambda-1) K}{2\lllambda L}, \tfrac{(\lllambda+1) K}{2\lllambda L}) \\
    0 & \text{for}\ \nu\notin [0, \tfrac K L)
  \end{cases}\ ,
$$
and $|\mathcal F \eta_1 (\xi)|\leq Ce^{-\gamma |\xi|^{1-\epsilon}}$, $|\mathcal F^{-1} \eta_2(x)|\leq Ce^{-\gamma |x|^{1-\epsilon}}$ .
Let $\eta_P=\eta_1\otimes\eta_2$.  Then $\supp\eta_P\subseteq[0,\tfrac 1 K)\times[0,\tfrac K L)=R_{K,L}$
and the operator  $P\in OPW^{1,1}(R_{K,L})$ has the following properties:

\noindent a) The operator family
\begin{eqnarray*}
   \left\{
            M_{\lllambda  K k}\,
            T_{\frac 1 K m -\frac  {\lllambda L}{K}l }\,
            P\,
            T_{\frac {\lllambda L}{ K}l} \,
            M_{\frac{ K}{L}n-\lllambda K k}\,\right\}_{k,l,m,n\in\Z}
\label{equation:OperatorFamily}
\end{eqnarray*}
is an $l^{p,q}_{\widetilde w}$--Riesz basis sequence for $OPW^{p,q}_w(\R^2)$.

\noindent b)  $P\in OPW^{1,1}(R_{K,L})$   and
there exist  functions $d_1,d_2:\R\rightarrow\R_0^+$ with
$$
|Pf(x)|\leq \|f\|_{M^{\infty,\infty}}\,d_1(x) \quad \text{and}\quad
 |\widehat{Pf}(\xi)|\leq \|f\|_{M^{\infty,\infty}}\,d_2(\xi),\quad f\in M^{\infty,\infty}(\R),
$$
and $d_1(x)\leq \widetilde C e^{-\widetilde \gamma |x|^{1-\epsilon}}$, $d_2(\xi)\leq \widetilde Ce^{-\widetilde \gamma |x|^{1-\epsilon}}$ for some $\widetilde C, \widetilde \gamma >0$.

\end{lemma}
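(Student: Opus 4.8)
The plan is to translate every operator-theoretic assertion into a statement about the spreading function $\eta_P=\eta_1\otimes\eta_2$ and its time--frequency shifts, and then to feed these into the Gabor machinery of Section~\ref{section:modulationspaces}. The bridge is Lemma~\ref{lemma:time-frequency-spreading-shift}: each member of the operator family in a) is a conjugated time--frequency shift of $P$, so matching $p=m/K$, $\xi=nK/L$, $\weight=uKk$, $r=uLl/K$ shows that its spreading function equals, up to a unimodular constant, $M_{(uKk,\,uLl/K)}T_{(m/K,\,nK/L)}\eta_P$. Thus the operator family is nothing but the Gabor system $(\eta_P,\Lambda)$ on $\R^2$ with translation lattice $\Lambda_t=\tfrac1K\Z\times\tfrac KL\Z$ and modulation lattice $\Lambda_m=uK\Z\times\tfrac{uL}K\Z$. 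Since $\supp\eta_P\subseteq R_{K,L}$ and $R_{K,L}$ is exactly a fundamental domain of $\Lambda_t$, I would first establish the $L^2(\R^2)$ Riesz-sequence property and then lift it to $OPW^{p,q}_w$.

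For a), note first that $\operatorname{covol}(\Lambda_t)\,\operatorname{covol}(\Lambda_m)=\tfrac1L\cdot u^2L=u^2>1$, placing the system in the undersampled (Riesz-sequence) regime. Because the translates $T_{(m/K,nK/L)}\eta_P$ are supported on pairwise disjoint tiles, the Gram matrix of the full system is block diagonal over the translation index, with identical blocks equal to the Gram matrix of $\{M_{(uKk,uLl/K)}\eta_P\}_{k,l}$; the latter tensorizes into $\{M_{uKk}\eta_1\}_k$ and $\{M_{uLl/K}\eta_2\}_l$. Each one-dimensional system is a Riesz sequence in $L^2(\R)$ by the Fourier-dual criterion $0<A\le\sum_j|\eta_i(\cdot+j\cdot\text{spacing})|^2\le B$: the decisive point is that the plateau intervals on which $\eta_1\equiv1$ and $\eta_2\equiv1$ have length exactly $\tfrac1{uK}$ and $\tfrac K{uL}$, equal to the respective shift spacings, so their translates tile $\R$ and force the lower bound, while compact support forces the upper bound. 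This yields a Riesz sequence $(\eta_P,\Lambda)$ in $L^2(\R^2)$. To pass to $OPW^{p,q}_w$, I would use that for operators with compactly supported spreading function Lemma~\ref{lemma:compactbounded} identifies the $OPW^{p,q}_w$-norm with a weighted mixed-$L^p$ (hence modulation-space) norm of $\eta_P$, so that the operator family is an $l^{p,q}_{\widetilde w}$-Riesz basis sequence iff $(\eta_P,\Lambda)$ is an $l^{p,q}_{\widetilde w}$-Riesz sequence in the corresponding modulation space; the latter follows from the $L^2$ result by the Riesz-sequence analogue of Theorem~\ref{thm:GaborEssentails}, whose dual window may here be taken as the tensor of the (Schwartz) one-dimensional dual windows, the required synthesis and analysis maps being bounded on $M^{p,q}_w$ for moderate $w$.

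For b), membership $P\in OPW^{1,1}(R_{K,L})$ is immediate, since $\eta_P\in\mathcal S(\R^2)$ with $\supp\eta_P\subseteq R_{K,L}$ forces $\sigma_P=\mathcal F^s\eta_P\in\mathcal S\subseteq L^1$. For the pointwise bounds I would insert $\eta_P=\eta_1\otimes\eta_2$ into the spreading representation \eqref{equation:spreadingrepresentationBochner} and factor
\[
  Pf(x)=\mathcal F^{-1}\eta_2(x)\,\big(\eta_1*f\big)(x).
\]
Writing $(\eta_1*f)(x)=\langle f,\overline{T_x\eta_1^-}\rangle$ with $\eta_1^-(s)=\eta_1(-s)$, and using the duality $(M^{1,1})'=M^{\infty,\infty}$ together with translation invariance of $\|\cdot\|_{M^{1,1}}$, gives $|(\eta_1*f)(x)|\le\|\eta_1\|_{M^{1,1}}\|f\|_{M^{\infty,\infty}}$ uniformly in $x$; hence $d_1(x)=\|\eta_1\|_{M^{1,1}}\,|\mathcal F^{-1}\eta_2(x)|$ inherits the subexponential decay of $\mathcal F^{-1}\eta_2$. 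On the Fourier side one has $\widehat{Pf}=\eta_2*(\widehat\eta_1\,\widehat f)$, so that $\widehat{Pf}(\xi)=\langle\widehat f,\,G_\xi\rangle$ with $G_\xi(\zeta)=\overline{\eta_2(\xi-\zeta)\widehat\eta_1(\zeta)}$; invoking Fourier invariance of the $M^{\infty,\infty}$-norm yields $|\widehat{Pf}(\xi)|\le\|f\|_{M^{\infty,\infty}}\|G_\xi\|_{M^{1,1}}$, so $d_2(\xi)=\|G_\xi\|_{M^{1,1}}$.

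The step I expect to be most delicate is the decay of $d_2$. The compact support of $\eta_2$ confines $G_\xi$ to the window $(\xi-\tfrac KL,\xi]$, on which $\widehat\eta_1$ and all its derivatives must decay like $e^{-\gamma|\xi|^{1-\epsilon}}$; this forces one to pick $\eta_1$ in a non-quasianalytic (Gevrey/Beurling) class, relying on the classical construction of compactly supported smooth functions with subexponential Fourier decay. Controlling $\|G_\xi\|_{M^{1,1}}$ then amounts to estimating the Feichtinger-algebra norm of a product localized to a window of fixed size, which I would bound by finitely many Schwartz seminorms on that window via the embedding of $C^N_c$ into $S_0$; these seminorms carry the subexponential factor and deliver $d_2(\xi)\le\widetilde C e^{-\widetilde\gamma|\xi|^{1-\epsilon}}$. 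Verifying that the dual windows in a) and the weight-twisted synthesis also respect this decay is the other place where genuinely subexponential, rather than merely polynomial, control is needed.
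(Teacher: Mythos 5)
Your proposal is correct and follows essentially the same skeleton as the paper's proof: part a) is reduced via Lemma~\ref{lemma:time-frequency-spreading-shift} to the Gabor system $(\eta_P,\Lambda)$ over the lattice $\tfrac1K\Z\times\tfrac KL\Z\times \lllambda K\Z\times\tfrac{\lllambda L}{K}\Z$, and part b) rests on the factorization $Pf=\mathcal F^{-1}\eta_2\cdot(\eta_1\ast f)$, yielding $d_1=\|\eta_1\|_{M^{1,1}}\,|\mathcal F^{-1}\eta_2|$ and $d_2(\xi)=\|\eta_2(\xi-\cdot)\widehat\eta_1(\cdot)\|_{M^{1,1}}$, which are exactly the bounds the paper imports from Lemma 3.4 of \cite{KP06}. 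You supply a detail the paper leaves implicit, namely the verification that $(\eta_P,\Lambda)$ is an $L^2$ Riesz sequence; your disjoint-support plus periodization argument is sound precisely because the plateaus of $\eta_1$ and $\eta_2$ have lengths $\tfrac1{\lllambda K}$ and $\tfrac K{\lllambda L}$ matching the dual spacings of the modulation lattice. The only genuine divergence is the final decay estimate for $d_2$: the paper uses $\|g\|_{M^{1,1}}\leq c\,\|\widehat g\|_{L^1}$ for fixed support size and then the Gelfand--Shilov estimate $V_{\widetilde\eta_2}\widehat\eta_1\in\mathcal S^{1-\epsilon}_{1-\epsilon}$ from \cite{GZ04}, whereas you embed compactly supported $C^N$ functions into $M^{1,1}$ and control derivatives of $\eta_2(\xi-\cdot)\widehat\eta_1$ on a window of fixed size. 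Both routes need the same input beyond the literal hypotheses of the lemma, namely that the derivatives of $\widehat\eta_1$ (not merely $\widehat\eta_1$ itself) decay subexponentially, i.e.\ that $\eta_1$ is taken from a Gevrey class; you flag this correctly, and the paper secures it through the Gelfand--Shilov membership, so the two arguments are of equal strength.
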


\begin{proof} The existence of $\eta_1,\eta_2$ satisfying the hypotheses stated above is established through mollifying characteristic functions.  In fact, using constructions of Gevrey class functions, it has been shown that for $\epsilon,\delta>0$, there exists  $\varphi:\R\rightarrow \R^+_0$ and $C,\gamma>0$ with $\supp \varphi \subseteq [-\delta,\delta]$, $\int\varphi =1$, $\widehat \varphi (\xi)\leq Ce^{-\gamma |\xi|^{1-\epsilon}}$\cite{Hor03,Hor05,DH98}. Note that the restriction to $w$ subexponential in Theorem~\ref{thm:main-full3} is a consequence to the fact that there exist no compactly supported functions whose Fourier transforms decay exponentially (see references in \cite{GP08}).

{\it a)} Due to Lemma
\ref{lemma:time-frequency-spreading-shift},
 \begin{eqnarray}
   \left\{
            M_{\left( \lllambda K k,\frac{\lllambda L}{ K}l\right)}
               \, T_{\left(\frac 1 K m, \frac{K}{L}n\right)}\ \eta_P
          \right\}_{k,l,m,n\in\Z}
\notag
\end{eqnarray}
being an $l^{p,q}_{\widetilde w}$--Riesz basis for its closed linear span in
$M_{1{\otimes}w}^{(1,1),(p,q)}(\R^2)$
implies that
\begin{eqnarray}
   \left\{
            M_{\lllambda  K k}\,
            T_{\frac 1 K m -\frac  {\lllambda L}{K}l }\,
            P\,
            T_{\frac {\lllambda L}{ K}l}\,
            M_{\frac{ K}{L}n-\lllambda K k}\right\}_{k,l,m,n\in\Z}
\notag
\end{eqnarray}
is an $l^{p,q}_{\widetilde w}$-Riesz basis for its closed linear span in $OPW^{p,q}_w(\R^2)$.

{\it b)} As shown in the proof of Lemma 3.4 in \cite{KP06},   we have $
  |Pf(x)|\leq \left|\widehat{\eta}_2(-x)\right|\
       \|f\|_{M^{\infty,\infty}} \|\eta_1\|_{M^{1,1}}$, so we can choose $d_1(x)=\left|\widehat{\eta}_2(-x)\right|
\|\eta_1\|_{M^{1,1}} $.

Similarly, we can compute $ |\widehat{Pf}(\xi)|\leq
       \|f\|_{M^{\infty,\infty}} \| \eta_2(\xi-\,\cdot)\widehat{\eta}_1(\cdot)\|_{M^{1,1}}.$
We claim that $d_2(\xi)= \|
\eta_2(\xi-\,\cdot)\widehat{\eta}_1(\cdot)\|_{M^{1,1}}$  has the postulated subexponential decay. Recall that for $g$  supported on $[a,b]$, we have $\|g\|_{M^{1,1}}\leq c\  \|\widehat g\|_{L^1}$ where $c$ depends only on the support size  $b-a$ (see Lemma~\ref{lemma:compactbounded} and \cite{Oko09}).  As $\eta_2(\xi-\,\cdot)\widehat{\eta}_1(\cdot)$ is compactly supported with uniform support size, we can compute
\begin{eqnarray}
  d_2(\xi)&=&  \|
\eta_2(\xi-\,\cdot)\widehat{\eta}_1(\cdot)\|_{M^{1,1}} \leq c\  \|
\mathcal F^{-1}\big(\eta_2(\xi-\,\cdot)\widehat{\eta}_1(\cdot)\big)\|_{L^{1}} \notag \\
&=&c\ \int \Big| \int \eta_2(\xi-\nu)\widehat{\eta}_1(\nu) e^{2\pi i x \nu} \, d\nu \Big|\, dx=c\ \int \Big|V_{\widetilde \eta_2}\widehat \eta_1 (\xi,-x)  \Big|\,dx
\,,\end{eqnarray}
where $\widetilde \eta_2(\xi)=\eta_2(-\xi)$. As the compact support of $\eta_1,\eta_2$ together with  $|\mathcal F \eta_1 (\xi) | \leq Ce^{-\gamma |\xi|^{1-\epsilon}}$, $|\mathcal F^{-1} \eta_2(x)|\leq Ce^{-\gamma |x|^{1-\epsilon}}$ imply that $\widehat \eta_1,\widetilde \eta_2$ are in the Gelfand--Shilov class $\mathcal S_{1-\epsilon}^{1-\epsilon}$ \cite{GS68}, we apply Proposition 3.12 in \cite{GZ04} to conclude that $V_{\widetilde \eta_2}\widehat \eta_1\in \mathcal S_{1-\epsilon}^{1-\epsilon}$, that is,
\begin{eqnarray*}
  d_2(\xi)&\leq & c\ \int \widetilde C^{-1} e^{-\widetilde \gamma \|(x,y)\|_\infty^{1-\epsilon}}  \,dx \ \leq\ \widetilde {\widetilde C}\,e^{-\widetilde \gamma |\xi|^{1-\epsilon}}
\,.\end{eqnarray*}
\end{proof}

Theorem~\ref{theorem:mainresult} extends the main result in \cite{Pfa08b} to weighted and mixed $l^p$ spaces with subexponential weights. Both results generalize to infinite dimensions the fact that $m\times n$
matrices with $m<n$ have a non--trivial kernel and, therefore,
are not bounded below  as operators acting on $\C^n$.

\begin{theorem}\label{theorem:mainresult}
Let $1\leq p_1,p_2,q_1,q_2 \leq \infty$, $w_1,w_2$ subexponential on $\Z^{2d}$, that is, for some $C,\gamma >0$, $0<\beta<1$, we have
\begin{eqnarray} C^{-1} \, e^{-\gamma \|n\|^\beta_\infty} \leq w_1(n),w_2(n)\leq C\, e^{\gamma \|n\|^\beta_\infty}\,. \label{eqn:subexponential}
\end{eqnarray}
If for  $M=(m_{j'j}):l_{w_1}^{p_1,q_1}(\Z^{2d})\rightarrow l_{w_2}^{p_2,q_2}(\Z^{2d})$, exists
$\lllambda>1$, $K_0>0$, and
$$\rho:\R^+_0\longrightarrow \R^+_0\ \text{ with }\ \rho\leq \widetilde C\, e^{-\widetilde \gamma \|n\|^{\widetilde \beta}_\infty},\quad \widetilde \beta > \beta,$$
with
$$\displaystyle
  |m_{j'j}|\leq \rho (\lllambda \|j'\|_\infty-\|j\|_\infty)
    ,\quad \lllambda \|j'\|_\infty-\|j\|_\infty > K_0,
$$ then $M$ has no bounded left inverses.
\end{theorem}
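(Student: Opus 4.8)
The plan is to show that $M$ is not bounded below; since a bounded left inverse $L$ would force $\|x\|\le\|L\|\,\|Mx\|$, this suffices. The guiding picture is the finite--dimensional fact quoted after the statement: because of the dilation factor $\lllambda>1$, a column indexed by $j$ only ``feeds'' into rows $j'$ with $\lllambda\|j'\|_\infty\lesssim\|j\|_\infty$, so truncating to $\|j\|_\infty\le R$ produces far more active columns ($\sim(2R)^{2d}$) than active rows ($\sim(2R/\lllambda)^{2d}$), and the surplus yields an approximate kernel. First I would fix a large radius $R$, let $S_R=\{j\in\Z^{2d}:\|j\|_\infty\le R\}$ be the column block, and introduce a cushion $K_1=K_1(R)$ (chosen below) with row block $T_R=\{j'\in\Z^{2d}:\lllambda\|j'\|_\infty\le R+K_1\}$.

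The exact cancellation of the ``near'' part comes from a dimension count. The restricted matrix $(m_{j'j})_{j'\in T_R,\,j\in S_R}$ is a finite complex matrix of size $|T_R|\times|S_R|$, and since $K_1$ will satisfy $K_1=o(R)$ while $\lllambda>1$, we have $|T_R|\sim(2R/\lllambda)^{2d}<(2R+1)^{2d}\sim|S_R|$ for all large $R$. Hence its kernel is nontrivial, and I pick $x\in\C^{S_R}\setminus\{0\}$ (extended by zero to $\Z^{2d}$, so finitely supported and in every space in sight) with $\sum_{j\in S_R}m_{j'j}x_j=0$ for every $j'\in T_R$. Thus $(Mx)_{j'}=0$ on $T_R$, and only the ``far'' rows survive. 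For $j'\notin T_R$ and $j\in S_R$ one has $\lllambda\|j'\|_\infty-\|j\|_\infty>K_1\ge K_0$, so the hypothesis applies and, by monotonicity of $s\mapsto e^{-\widetilde\gamma s^{\widetilde\beta}}$, $|m_{j'j}|\le\rho(\lllambda\|j'\|_\infty-\|j\|_\infty)\le\widetilde C\,e^{-\widetilde\gamma(\lllambda\|j'\|_\infty-R)^{\widetilde\beta}}$.

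Next I would estimate the ratio $\|Mx\|_{l_{w_2}^{p_2,q_2}}/\|x\|_{l_{w_1}^{p_1,q_1}}$. Using $\|\cdot\|_{l^{p,q}}\le\|\cdot\|_{l^1}$ in the numerator, the subexponential bound $w_2(j')\le Ce^{\gamma\|j'\|_\infty^\beta}$, and $|(Mx)_{j'}|\le\|x\|_\infty|S_R|\,\widetilde C\,e^{-\widetilde\gamma(\lllambda\|j'\|_\infty-R)^{\widetilde\beta}}$, I bound the numerator by a constant times $\|x\|_\infty|S_R|\sum_{r>(R+K_1)/\lllambda}r^{2d-1}e^{\gamma r^\beta-\widetilde\gamma(\lllambda r-R)^{\widetilde\beta}}$, where $r=\|j'\|_\infty$. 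Splitting at $r=2R$: for $r>2R$ one has $\lllambda r-R>r/2$ and $\widetilde\beta>\beta$ makes $\sum_{r>2R}r^{2d-1}e^{\gamma r^\beta-\widetilde\gamma(r/2)^{\widetilde\beta}}$ a small tail of a convergent series; for $(R+K_1)/\lllambda<r\le 2R$ the decay term is uniformly $\le e^{-\widetilde\gamma K_1^{\widetilde\beta}}$ while the growth term is $\le e^{\gamma(2R)^\beta}$, giving a contribution $\lesssim R^{2d}e^{\gamma 2^\beta R^\beta-\widetilde\gamma K_1^{\widetilde\beta}}$. For the denominator I use $\|x\|_{l_{w_1}^{p_1,q_1}}\ge\|w_1x\|_\infty\ge C^{-1}e^{-\gamma R^\beta}\|x\|_\infty$, since the maximizing index lies in $S_R$. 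Cancelling $\|x\|_\infty$ and absorbing $|S_R|\lesssim R^{2d}$, the ratio is $\lesssim R^{4d}\,e^{\gamma(1+2^\beta)R^\beta-\widetilde\gamma K_1^{\widetilde\beta}}$ plus a negligible term.

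It remains to choose the cushion. Because $\widetilde\beta>\beta$, the exponent $\beta/\widetilde\beta$ is strictly below $1$, so I may fix $\theta$ with $\beta/\widetilde\beta<\theta<1$ and set $K_1=R^\theta$. Then $K_1=o(R)$ (keeping $|T_R|<|S_R|$) while $\widetilde\gamma K_1^{\widetilde\beta}=\widetilde\gamma R^{\theta\widetilde\beta}$ with $\theta\widetilde\beta>\beta$ dominates $\gamma(1+2^\beta)R^\beta$ and the polynomial $R^{4d}$, so the ratio tends to $0$ as $R\to\infty$. This produces, for every $c>0$, a finitely supported $x$ with $\|Mx\|<c\|x\|$, whence $M$ is not bounded below and admits no bounded left inverse. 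The main obstacle is exactly this balancing: truncating only to the geometric ``shadow'' of radius $R/\lllambda$ leaves uncontrolled boundary leakage from columns with $\|j\|_\infty\approx R$, and the resolution is to kill a slightly thicker row block of cushioned radius $(R+K_1)/\lllambda$, exploiting the gap $\widetilde\beta>\beta$ to make the surviving entries uniformly smaller than all accumulated growth while keeping the killed block strictly smaller than the column block.
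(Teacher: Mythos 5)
Your proposal is correct and follows essentially the same strategy as the paper's proof: truncate to a column block and a strictly smaller row block (smaller because $\lllambda>1$), use the resulting rectangular matrix's nontrivial kernel to annihilate the near rows exactly, and control the far rows by the off-diagonal decay, with the gap $\widetilde\beta>\beta$ ensuring the subexponential weight growth is dominated. The only differences are cosmetic: the paper first reduces to the pair $l^\infty_{1/v}\to l^1_v$ via embeddings and calibrates the cushion $K_1$ against a target $\epsilon$ through a double-sum estimate proved with incomplete Gamma asymptotics, whereas you keep the mixed norms inline and let $R\to\infty$ with cushion $K_1=R^\theta$, $\beta/\widetilde\beta<\theta<1$ --- both routes close the argument.
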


\begin{proof}
Let $v(n)=C\, e^{\gamma \|n\|^\beta_\infty}$. Note that $l_{w_1}^{p_1,q_1}(\Z^{2d})$ embeds continuously in $l_{1/v}^{\infty,\infty}(\Z^{2d})=l_{1/v}^{\infty}(\Z^{2d})$ and $l_{v}^{1,1}(\Z^{2d})=l_{v}^{1}(\Z^{2d})$ embeds continuously in $l_{w_2}^{p_2,q_2}(\Z^{2d})$. Hence, it suffices to show that for all $\epsilon >0$ exists $x\in l_{1/v}^{\infty}(\Z^{2d})$ with $\|x\|_{l_{1/v}^\infty}=1$ and $\|Mx\|_{l_{v}^1}\leq \epsilon$. For notational simplicity, we replace $2d$ by $D$ in the following.

First, observe that
\eqa{A_{K_1}=
    e^{\gamma K_1^\beta}
        \sum_{K\geq K_1}
            K^{D-1} e^{\gamma K^\beta}
        \sum_{k\geq K}
            k^{D-1}\,  e^{-\widetilde \gamma k^{\widetilde \beta}}
    \to 0\,
    \text{ as } K_1\to \infty . \label{equation:DoubleSumW}
}
Applying the integral criterion for sums, this would follow from
\eqa{
    e^{\gamma K_1^\beta}
        \int_{ K_1}^\infty
            x^{D-1} e^{\gamma x^\beta}
        \int_{x}^\infty
            y^{D-1}\,  e^{-\widetilde \gamma y^{\widetilde \beta}}\, dy \, dx
    \to 0\,
    \text{ as } K_1\to \infty .\label{equation:DoubleIntegralW}
}
For large $x$, a substitution yields
\begin{eqnarray}
   \int_x^\infty y^{D{-}1}\,  e^{{-}\widetilde \gamma y^{\widetilde \beta}}\, dy&{=}& \frac 1 {\widetilde\beta \widetilde\gamma} \int_{\widetilde\gamma x^{\widetilde \beta}}^\infty \big(\tfrac t {\widetilde\gamma}\big)^{\frac 1 {\widetilde\beta} (D{-}1)}\, \big(\tfrac t {\widetilde\gamma} \big)^{\frac {1{-}\widetilde\beta} {\widetilde\beta} (D{-}1)}  e^{{-}t}\, dt
{=} \frac 1 {\widetilde\beta \widetilde\gamma} \widetilde \gamma^{2(D{-}1)(1{-}\frac 1 {\widetilde \beta})} \int_{\widetilde\gamma x^{\widetilde \beta}}^\infty t^{2 \frac {D{-}1}{\widetilde \beta}{{-}}D{+} 2 {-} 1 } e^{{-}t} dt\notag \\
&=& \frac 1 {\widetilde\beta \widetilde\gamma} \widetilde \gamma^{2(D{-}1)(1{-}\frac 1 {\widetilde \beta})}\ \Gamma( 2 \tfrac {D{-}1}{\widetilde \beta}{{-}}D{+} 2,\widetilde \gamma x^{\widetilde \beta})
 \leq \frac 2 {\widetilde\beta \widetilde\gamma} \widetilde \gamma^{2(D{-}1)(1{-}\frac 1 {\widetilde \beta})}   \big(\widetilde \gamma x^{\widetilde \beta}\big)^{ 2 \tfrac {D{-}1}{\widetilde \beta}{-}D{+} 1}\ e^{{-}\widetilde \gamma x^{\widetilde \beta}}
 \notag \\
 &\leq&  \frac 2 {\widetilde\beta} \widetilde \gamma^{D{-}2}\ x^{2  {D{-}1}{-}\widetilde \beta (D{+} 1)}\ e^{{-}\widetilde \gamma x^{\widetilde \beta}} \notag\notag\, ,
\end{eqnarray}
 where $\Gamma$ denotes the upper incomplete Gamma function, and we used $\frac {\Gamma (s,y)}{y^{s-1}e^{-y}} \to 1$ as $y\to \infty$ \cite{AMS92}.

The fact that $\widetilde \beta >\beta$ allows us to estimate for large $x$
$$e^{\gamma x^\beta} e^{-\widetilde \gamma x^{\widetilde \beta}}= e^{-\widetilde \gamma x^{\widetilde \beta}(1- \frac {\gamma} {\widetilde \gamma} x^{\beta-\widetilde \beta})} \leq e^{-\frac 1 2 \widetilde \gamma x^{\widetilde \beta}}\,. $$
Hence, we can repeat the arguments above to the outer integral and the product with $e^{\gamma K_1^\beta}
$ in  \eqref{equation:DoubleIntegralW} to  obtain \eqref{equation:DoubleIntegralW}, and, hence, \eqref{equation:DoubleSumW} holds.

To continue our proof, we fix
$\epsilon>0$ and note that (\ref{equation:DoubleSumW}) provides us
with a $K_1>K_0$ satisfying $A_{K_1} \leq  (2^{2D}D^2\,\widetilde C\, C^2\, e^{\gamma N^\beta})^{-1}\, \epsilon\,.
$

As in \cite{Pfa08b}, set $N=\left\lceil \frac{ \lambda (K_1+1)}{\lambda - 1}
\right\rceil$ and $\widetilde{N}=\lceil \frac{N}{\lambda}
\rceil+K_1.$ Then $\frac{\lambda(K_1+1)}{\lambda - 1}
  \leq N\leq\frac{\lambda(K_1+2)}{\lambda - 1}
$  implies $ \lambda N\geq \lambda K_1 + \lambda +N$ and
$
N\geq
 K_1 + \frac N \lambda +1
    > K_1 + \left\lceil \frac N \lambda\right\rceil=\widetilde N.
 $
Hence, $(2\widetilde{N}+1)^D<(2N+1)^D$ so that the matrix $
\widetilde{M} = ( m_{j'j})_{\|j'\|_\infty\leq
\widetilde{N},\|j\|\leq N}: \C^{(2N+1)^D}\longrightarrow
\C^{(2\widetilde{N}+1)^D}$ has a nontrivial kernel. We now choose
$x\in {l_{1/v}^\infty}(\Z^D)$ with  $\|x\|_{l_{1/v}^\infty}=1$, $x_j=0$ if $\|j\|_\infty> N$, and
and $\widetilde{M}\widetilde{x}=0$ where $\widetilde x$ is $x$ restricted to the set $\{j: \ \|j\|_\infty\leq N\}$ .

By construction we have $(Mx)_{j'}=0$ for
$\|j'\|_\infty\leq \widetilde{N}$. To estimate $(Mx)_{j'}$ for
$\|j'\|_\infty> \widetilde{N}$, we fix $K > K_1$ and one of the
$2D(2 \big(\lceil\frac{N}{\lambda} \rceil+K\big))^{D-1}$ indices
$j'\in\Z^D$ with $\|j'\|_\infty=\lceil\frac{N}{\lambda} \rceil+K$.
We have $\|\lambda j'\|_\infty \geq N+K \lambda$ and $\lambda
\|j'\|_\infty-\|j\|_\infty \geq K\lambda\geq K$ for all $j\in\Z^D$
with $\|j\|_\infty\leq N$. Therefore, using H\"older's inequality for weighted $l^p$-spaces, we obtain
\begin{eqnarray*}
|(Mx)_{j'}|
  &=&    \Big|\sum_{\|j\|_\infty\leq N} m_{j'j}x_j\Big|
  \leq \|x\|_{l_{1/v}^\infty}   \sum_{\|j\|_\infty\leq N} v(j)\, \left|m_{j'j}\right|
    \\
  &\leq& C\, e^{\gamma N^\beta}
  \sum_{\|j\|_\infty\leq N} \rho(\lambda \|j'\|_\infty-\|j\|_\infty)
  \leq C\, e^{\gamma N^\beta} \sum_{\|j\|_\infty\geq K}\rho (\|j\|_\infty)
  =2^DD\, C\, e^{\gamma N^\beta} \sum_{k\geq K}k^{D-1} \rho(k).
\end{eqnarray*}
Next, we  compute
\eqa{ \|Mx\|_{l_v^{1}}
  &=& \sum_{j'\in\Z^D} v(j')\,|(Mx)_{j'}|
  = \sum_{\|j'\|_\infty \geq \lceil \frac{N}{\lambda} \rceil+K_1}
        v(j')\, |(Mx)_{j'}|\notag \\
  &\leq& 2^DD\, C\, e^{\gamma N^\beta}
        \sum_{\|j'\|_\infty \geq \lceil \frac{N}{\lambda} \rceil+K_1} v(j')
        \sum_{k\geq \|j'\|_\infty}k^{D-1} \rho(k)
        \notag\\
  &\leq&2^DD\, C\, e^{\gamma N^\beta}
        \sum_{K\geq \lceil \frac{N}{\lambda} \rceil+K_1}
            2D (2K)^{D-1} C\, e^{\gamma K^\beta}
       \sum_{k\geq K}
            k^{D-1}\, \rho(k)\notag \\
        &\leq&2^{2D}D^2\,\widetilde C\, C^2\, e^{\gamma N^\beta}
        \sum_{K\geq \lceil \frac{N}{\lambda} \rceil+K_1}
            K^{D-1} e^{\gamma K^\beta}
        \sum_{k\geq K}
            k^{D-1}\,  e^{-\widetilde \gamma k^{\widetilde \beta}}\leq \epsilon \notag\,.
}
\end{proof}


Combining the results above, we can now proceed to prove Theorem~\ref{thm:main-full3}.


\vspace{.3cm}
{\it Proof of Theorem~\ref{thm:main-full3}}. As $w$ is subexponential, there exists $C,\gamma,\epsilon>0$ with $|w(x,\xi)|\leq C\,e^{\gamma \|(x,\xi)\|_\infty^{1- 2\epsilon}}$. For this $\epsilon>0$ choose $\lllambda$, $\eta_1$, $\eta_2$, $P$, $d_1$, and $d_2$ as in
Lemma~\ref{lem:elementarybound}.

Define the synthesis operator $E:l^{p,q}_w(\Z^2)\rightarrow OPW^{p,q}_w(V_M)\subseteq OPW^{p,q}_w(M)$ as
follows. For $\sigma=\{\sigma_{k,p}\}\in l^{p,q}_{\widetilde w}(\Z^2)$ write
$\sigma_{k,p}=\sigma_{k,lJ+j}$ for $l\in\Z$ and $0\le j<J$ and
define
\begin{eqnarray}\label{eqn:synthesismap}
  E(\sigma) = \sum_{k,l\in Z}\sum_{j=0}^{J-1}\ \sigma_{k,lJ{+}j} \
            M_{ \lllambda  K k}\,
            T_{\frac{1}{K}k_j + \frac  {\lllambda L}{K}l }\,
            P\,
            T_{- \frac  {\lllambda L}{K}l }\,
            M_{\frac{ K}{L}p_j-\lllambda K k}
\end{eqnarray}
with convergence in case $p,q\neq \infty$ and weak-$\ast$ convergence else.
Since
\begin{eqnarray}
  \left\{
            M_{\lllambda  K k}\,
            T_{\frac 1 K m -\frac  {\lllambda L}{K}l }\,
            P\,
            T_{\frac {\lllambda L}{ K}l} \,
            M_{\frac{ K}{L}n-\lllambda K k}\right\}_{k,l,m,n\in\Z}
\notag
\end{eqnarray}
is an $l_{\widetilde w}^{p,q}$--Riesz basis for its closed linear span in $OPW^{p,q}_w(\R^2)$, so is its subset
\begin{eqnarray}
    \left\{
            M_{\lllambda  K k}\,
            T_{\frac{1}{K}k_j +\frac  {\lllambda L}{K}l }\,
            P\,
            T_{- \frac  {\lllambda L}{K}l }\,
            M_{\frac{ K}{L}p_j-\lllambda K k}\right\}_{k,l\in\Z,0\le j<J}
\notag
\end{eqnarray}
 and $E$ is bounded and bounded below .

By Theorem~\ref{thm:GaborEssentails}, the Gabor system
$
  (\ga ,a'\Z{\times}b'\Z)
    =\left\{M_{ka'}T_{lb'} \ga \right\}
$
is an $l^{p,q}_{\widetilde w}$--frame for any  $a',b'>0$ with $a'b'<1$, and we
conclude that the  analysis map given by
\begin{equation}\label{eqn:analysismap}
  C_{\ga }:M_w^{p,q}(\R)  \rightarrow  l_{\widetilde w}^{p,q}(\Z^2), \quad
         f\mapsto
         \left\{
           \langle f, M_{\lllambda^2 K\,k}
            T_{\frac{\lllambda^2  L}{KJ}\,l } \ga \rangle
         \right\}_{k,l}
\end{equation}
is bounded and bounded below  since $\lllambda^2 K\frac{\lllambda^2 L}{KJ}
=\lllambda^4 \frac{L}{J} <1.$

For simplicity of notation, set $\alpha =K$ and $ \beta=\frac L
{KJ} $. Fix $f\in M^{\infty,\infty}(\R)$ and consider the
composition
$$
\begin{array}{ccccccc}
  l^{p,q}_{\widetilde w}(\Z^2)      & \stackrel{E}{\rightarrow} & OPW^{p,q}_w(M)       & \stackrel{\Tg}{\rightarrow} &  M^{p,q}_w(\R)
        & \stackrel{C_{\ga }}{\rightarrow} & l^{p,q}_{\widetilde w}(\Z^2)
  \\
  \sigma         & \mapsto         & E\sigma &  \mapsto   & E\sigma\,g
        & \mapsto         &
        \left\{\, \langle\, E\sigma\,g,\ M_{\lllambda^2\alpha k'}T_{\lllambda^2\beta l'} \,
         \ga  \, \rangle \, \right\}_{k',l'}\, .
\end{array}
$$
It is easily computed that the operator $C_{\ga }{\circ} \Phi_f {\circ} E$ is
represented
--- with respect to the canonical basis $\{\delta(\cdot-n)\}_n$
of $l_{\widetilde w}^{p,q}(\Z^2)$ ---  by the bi-infinite matrix
\begin{eqnarray*}
{\mathcal M}=
    \Big(m_{k',l', \,k \,,lJ+j}\Big)=
    \Big(   \langle\, \
            M_{\lllambda \alpha k}\,
            T_{\frac{k_j}{\alpha} + \lllambda   \beta l J}\,
            P\,
            T_{- \lllambda \beta l J  }\,
            M_{\frac{  p_j}{\beta J}-\lllambda \alpha k}\,f\, ,\
                M_{\lllambda^2\alpha k'}T_{\lllambda^2 \beta l'} \, \ga \,
            \rangle
    \Big) \, .
\end{eqnarray*}
Setting \begin{eqnarray*}
\widetilde{d}_1=\max_{j=0,\dots,J-1} T_{\frac{k_j}{\alpha} - \lambda
                    \beta j} d_1\,,\label{equation:d_1-tilde}
\end{eqnarray*}
we observe
\begin{eqnarray*}
  |m_{k',l',k,lJ+j}|
        &\leq&
          \big\langle\,
            T_{  \lllambda   \beta (l J+j)}
            \big( T_{\frac{k_j}{\alpha} - \lllambda
                    \beta j} \left|P\,
            T_{-\lllambda \beta l J  }\,
            M_{\frac{  p_j}{\beta J}-\lllambda \alpha k}\,f\,\right|\big)\,
            ,
               T_{\lllambda^2\beta l'} \ga \,
            \big\rangle
        \notag\\
        &\leq& \|f\|_{M^{\infty,\infty}} \
          \big\langle\,
            T_{  \lllambda   \beta (l J+j)}
            T_{\frac{k_j}{\alpha} - \lllambda
                    \beta j} d_1\, ,\
               T_{\lllambda^2\beta l'} \ga \,
            \big\rangle
        \notag\\
      &\leq& \|f\|_{M^{\infty,\infty}} \ (\widetilde{d}_1 \ast \ga ) \, (\lllambda\beta(\lllambda l'-(lJ+j)))\, ,
      \notag
\end{eqnarray*}
and
\begin{eqnarray*}
    |m_{k',l',k,lJ+j}|
       &=&
         \Big|
          \big\langle\,
                             T_{\lllambda \alpha k}\,
            M_{-\frac{k_j}{\alpha} - \lllambda   \beta l J}\,
            \big(P\,
            T_{-\lllambda \beta l J  }\,
            M_{\frac{  p_j}{\beta J}-\lllambda \alpha k}\,f\,\big)\widehat{\ }\,
            ,
                T_{\lllambda^2\alpha k'}M_{-\lllambda^2\beta l'} \ga \,
            \big\rangle
         \Big|
        \notag \\
  &\leq&
          \langle\,
                 T_{\lllambda \alpha k}
            \left|
            \big(P\,
            T_{-\lllambda \beta l J  }\,
            M_{\frac{  p_j}{\beta J}-\lllambda \alpha k}\,f\,\big)\widehat{\ }\ \right|\, ,\
                T_{\lllambda^2\alpha k'} \ga \,
            \rangle
                 \notag \\
  &\leq& \|f\|_{M^{\infty,\infty}} \ (d_2 \ast \ga ) (\lllambda\alpha(\lllambda k'-k))\notag.
\end{eqnarray*}

Observing that for appropriate $\widetilde C, \widetilde \gamma$, we have $\widetilde d_1\ast \ga (x)d_2\ast \ga(\xi)\leq \widetilde C\ e^{- \widetilde \gamma \|(x,\xi)\|_\infty^{1- \epsilon}}$ and ${1- 2\epsilon}<{1- \epsilon}$, allows us to apply Theorem~\ref{theorem:mainresult} to $\mathcal M$. This completes the proof. \hfill $\square$

\section{Outlook}\label{section:outlook}
Recent results in operator identification with relevance to  operator sampling include, for example,  the identification of Multiple Input Multiple Output (MIMO) channels \cite{Pfa08} and an extension of operator sampling to irregular sampling sets \cite{HP09}.
Some more fundamental questions concerning sampling and identification of operator Paley--Wiener spaces are still open. In the following we describe two such questions.

\subsubsection*{Unbounded spreading domains with small Lebesgue measure.}

The extension of Theorem~\ref{thm:main-full2} to $OPW^{p,q}_w(M)$ with $M$ unbounded but with Lebesgue measure less than one remains open.
The following observations encourage tackling this question:
\begin{enumerate}

  \item Multiplication operators with not necessarily bandlimited symbol in $L^2(\R^2)$ are clearly identifiable with identifier $g=\chi_\R\in M^{\infty,\infty}(\R)$. Note that the characteristic function $\chi_\R$  is the weak-$\ast$ limit of $T\sum_{n\in\Z} \delta_{nT}$ as $T\to 0$. Hence, the space $OPW^{\infty,\infty}\big(\{0\}{\times}\widehat \R \big)$ is identifiable.

  \item Time--invariant operators with not necessarily compactly supported $L^2(\R^2)$ impulse response are identifiable with identifier $\delta$ which is  the weak-$\ast$ limit of $\sum_{n\in\Z} \delta_{nT}$ as $T\to \infty$. Consequently, the space $OPW^{\infty,\infty}\big(\R{\times}\{0\} \big)$ is identifiable.

       \item In \cite{KP06} it is shown that  $OPW(M)$ is identifiable if $M$ is a possibly unbounded  fundamental domain a lattice $\Lambda$ in $\R^2$ with $\Lambda$ having density less than or equal to one. This result covers, for example, $OPW\big(\big\{(t,\nu): \ t{\geq} -1,\ \nu{<}1,\ 2^{-(t+1)}{\leq} \nu {\leq} 2^{-t}\big\}\big)$ as the unbounded set $\big\{(t,\nu): \ t{\geq} -1,\ \nu{<}1,\ 2^{-(t+1)}{\leq} \nu {\leq} 2^{-t}\big\}$ is a fundamental domain of $\Z^2$.

\end{enumerate}

The natural approach to construct identifiers for $OPW(A)$ as weak-$\ast$ limit of identifiers $g_N$ for $OPW(A\cap [-N,N]{\times} [-N,N])$ is difficult as the constants implied by $\asymp$ in \eqref{eqn:mainresult2item1} depend in a non-trivial matter on $g_N=\sum_n c_{n,N} \delta_{x_{n,N}}$, $N\in\N$, if the sequences $\{ c_{n,N} \}$ are not constant.

\subsubsection*{Generalizations to higher dimensions.}

As mentioned in Section~\ref{section:MainResults2}, our proof  of Theorem~\ref{thm:main-full2} hinges on the existence of identifiers in an analogous setup where the locally compact Abelian group $\R$ is replaced by an appropriate finite cyclic group of prime order $\Z_p$  \cite{KPR08,LPW05}. In fact, generalizing Theorem~\ref{thm:main-full2}, to operators acting on $L^2(\R^d)$ would be possible if  the conclusions of Theorem~\ref{prop:generallinear} hold for sufficiently many composites $n$ taking the place of prime $p$.
In fact, in \cite{KPR08}, we ask the following
\begin{question}Is it true that for all $L\in\N$ exists $c\in\C^L$ so that the vectors  $\pi(k,\ell)c$, $k,\ell \in 0,\ldots, L-1$, defined by $(\pi(k,\ell)c)_j=c_{j-k}\,e^{2\pi i \frac {j\ell}L}$, $k,\ell=0,\ldots, L-1$, are in general linear position.
\end{question}

\section{Acknowledgement}
Foremost, I would like to thank David Walnut as only his contributions and guidance allowed for the joint development of this sampling theory for operators. I would like to thank Werner Kozek for introducing me to the topic of operator identification and John Benedetto, Hans Feichtinger, Niklas Grip, Karlheinz Gr\"ochenig, Yoon Mi Hong, Kurt Jetter, Felix Krahmer, Onur Oktay, and Peter Rashkov for enriching discussions on the time--frequency analysis of operators.

\bibliography{../Bibliography/gabor_goetz}
\bibliographystyle{alpha}

%
%

\end{document}